\newtheorem{theorem}{Theorem}[section]
\newtheorem{lemma}[theorem]{Lemma}
\newtheorem{conjecture}[theorem]{Conjecture}
\newtheorem{corollary}[theorem]{Corollary}
\newtheorem{question}[theorem]{Question}
\newcommand{\qed}{\relax\ifmmode\hskip2em\Box\else\unskip\nobreak\hfill$\Box$\fi}
\newcounter{claim}
\newenvironment{proof}[1][]%
	{\noindent {\setcounter{claim}{0}\it Proof. }{#1}{}}{$\square$\vspace{2ex}}
\newenvironment{claim}[1][]%
	{\refstepcounter{claim}\vspace{1ex}\noindent {(\it\arabic{claim}) {#1}{}}\it}{\vspace{1ex}}
\newenvironment{proofclaim}[1][]%
	{\noindent {}{#1}{}}{ This proves~(\arabic{claim}).\vspace{1ex}}
\title{Perfect graphs: a survey}
\author{Nicolas Trotignon\\CNRS, LIP, ENS de Lyon\\Email: nicolas.trotignon@ens-lyon.fr}
\begin{document}
\maketitle

\begin{abstract}
  Perfect graphs were defined by Claude Berge in the 1960s.  They are
  important objects for graph theory, linear programming and
  combinatorial optimization.  Claude Berge made a conjecture about
  them, that was proved by Chudnovsky, Robertson, Seymour and Thomas
  in 2002, and is now called the strong perfect graph theorem.  This
  is a survey about perfect graphs, mostly focused on the strong
  perfect graph theorem.

  \noindent {\bf Key words:} Berge graph, perfect graph, graph class, strong
  perfect graph theorem, induced subgraph, graph colouring. 

  \noindent {\bf AMS classification:} 05C17
\end{abstract}

{\vspace{-2ex}}

\tableofcontents
\newpage

\section*{Addendum}
\addcontentsline{toc}{section}{Addendum}

A short version of this survey appeared in~\cite{nicolas:pefectBook}.

Since the first publication of this survey, an open problem has been
solved~: Conjecture~\ref{c:oddHole}, solved by Scott and
Seymour~\cite{scottSey:oddHole}.  

The author of the present survey does not plan future versions.

\section{Introduction}

The \emph{chromatic number} of a graph $G$, denoted by $\chi(G)$, is
the minimum number of colours needed to assign a colour to each vertex
of $G$ in such a way that adjacent vertices receive different colours.
The \emph{clique number} of $G$, denoted by $\omega(G)$ is the maximum
number of pairwise adjacent vertices in $G$.  Every graph $G$ clearly
satisfies $\chi(G) \geq \omega(G)$, because the vertices of a clique
must receive different colours.  A graph $G$ is \emph{perfect} if
every induced subgraph $H$ of $G$ satisfies $\chi(H) = \omega(H)$.  A
chordless cycle of length $2k+1$, $k\geq 2$, satisfies $3 = \chi >
\omega = 2$, and its complement satisfies $k+1 = \chi > \omega = k$.
These graphs are therefore \emph{imperfect}.  Since perfect graphs are
closed under taking induced subgraphs, they must be defined by
excluding a familly $\cal F$ of graphs as induced subgraphs.  The
\emph{strong perfect graph theorem} (\emph{SPGT} for short) states
that the two examples that we just gave are the only members of $\cal
F$.  

Let us make this more formal.  A \emph{hole} in a graph $G$ is an
induced subgraph of $G$ isomorphic to a cycle chordless cycle of
length at least~4.  An \emph{antihole} is an induced subgraph $H$ of
$G$, such that $\overline{H}$ is hole of $\overline{G}$.  A hole
(resp.\ an antihole) is \emph{odd} or \emph{even} according to the
number of its vertices (that is equal to the number of its edges).  A
graph is \emph{Berge} if it does not contain an odd hole nor an odd
antihole.  The following, known as the \emph{SPGT}, was conjectured by
Berge~\cite{berge:61} in the 1960s and was the object of much research
until it was finally proved in 2002 by Chudnovsky, Robertson, Seymour
and Thomas~\cite{chudnovsky.r.s.t:spgt} (since then, a shorter proof
was discovered by Chudnovsky and
Seymour~\cite{chudnovsky.seymour:even}).

\begin{figure}[h]
\center  \parbox[c]{1.3cm}{\includegraphics{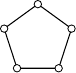}} $\equiv$
  \parbox[c]{1.3cm}{\includegraphics{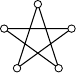}} \rule{1.5em}{0ex} 
  \parbox[c]{1.3cm}{\includegraphics{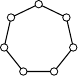}} \rule{1.5em}{0ex} 
  \parbox[c]{1.3cm}{\includegraphics{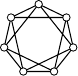}} $\equiv$ 
  \parbox[c]{1.4cm}{\includegraphics{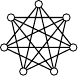}} 
\caption{Odd holes and antiholes:  $C_5 \equiv \overline{C_5}$, $C_7$ and $\overline{C_7}$}
\end{figure}

\begin{theorem}[{Chudnovsky, Robertson,
  Seymour and Thomas 2002}]
  \label{th:spgt}
  A graph is perfect if and only if it is Berge. 
\end{theorem}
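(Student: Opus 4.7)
The plan is to prove the two directions separately. The forward implication, that every perfect graph is Berge, is immediate from the discussion preceding the statement: odd holes and odd antiholes are themselves imperfect, and the class of perfect graphs is closed under taking induced subgraphs, so no perfect graph can contain either kind of obstruction.

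For the substantial direction, Berge implies perfect, I would argue by contradiction via a minimum counterexample. Suppose $G$ is a Berge graph with $|V(G)|$ minimum subject to $\chi(G) > \omega(G)$. By minimality, every proper induced subgraph of $G$ is perfect. The overall strategy is to combine two ingredients: a structure theorem asserting that every Berge graph is either ``basic'' or admits one of a short list of prescribed decompositions, and a set of lemmas saying that none of those decompositions can occur in a minimum imperfect graph. Together these yield a contradiction.

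On the easy side of this dichotomy, one designates as basic the bipartite graphs, the line graphs of bipartite graphs, the double split graphs, and the complements of each of these; all four classes are directly seen to be perfect (bipartite trivially; line graphs of bipartite graphs via K\"onig's edge-colouring theorem; double split graphs by a hand check; and complements via the weak perfect graph theorem of Lov\'asz, which is itself a minimum-counterexample argument and thus available as a subroutine). For the decompositions—the $2$-join, its complement, the balanced skew partition, and a homogeneous pair variant—one proves that if $G$ is a minimum imperfect Berge graph then $G$ cannot admit any of them: the two sides of the decomposition are smaller, hence perfect, hence can be $\omega(G)$-coloured individually, and the compatibility of the decomposition lets one paste these colourings together into an $\omega(G)$-colouring of $G$.

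The main obstacle, by an enormous margin, is the structure theorem. One proves it by a long induction, splitting into cases according to which configurations $G$ contains: presence of a suitable line graph of a subdivision of $K_4$, of a ``prism'' or a long even prism, of specific wheels, of stretchers, of the bull, and so on. For each configuration, one either locates a membership in a basic class or exhibits one of the allowed decompositions, typically by a delicate analysis of how a fixed odd-hole-free structure propagates throughout $G$. This case analysis is where essentially all the difficulty of the SPGT is concentrated; the minimum-counterexample wrapper and the verification that basic classes are perfect are, by comparison, short and conceptually clean.
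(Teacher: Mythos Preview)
Your proposal is correct and follows the paper's approach: the easy direction is dispatched as you say, and the hard direction proceeds via a minimum counterexample that, by the decomposition theorem (Theorem~\ref{th:dec}), must be basic or decomposable, each alternative yielding a contradiction through the usefulness lemmas of Section~\ref{s:dec} and the perfection of the basic classes from Section~\ref{s:basic}. Your sketch of the structure theorem's case analysis is broadly right, though the paper's actual sequence of dense configurations (line graphs of bipartite subdivisions of $K_4$, even and long prisms, the double diamond, various wheels, long antiholes) differs in detail from your list, and the ``paste the colourings'' description understates the subtlety of the balanced skew partition case (see Lemma~\ref{l:ext} and the proof of Theorem~4.6).
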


One direction is easy: every perfect graph is Berge, since as we
observed already odd holes and antiholes satisfy $\chi = \omega + 1$.
The proof of the converse is very long and relies on structural graph
theory.  The main step is a \emph{decomposition theorem}
(Theorem~\ref{th:dec}), stating that every Berge graph is either in a
well-understood \emph{basic} class of perfect graphs (see
Section~\ref{s:basic}), or has some \emph{decomposition} (see
Section~\ref{s:dec}).  The goal of this survey is to make this result
and its meaning understandable to a non-specializist mathematician.  To
this purpose, not only the proof is surveyed, but also results that
were seminal to it, and some that were proved after it.  Our goal is
also to present perfect graphs as a lively subject for researchers, by
mentioning several attractive open questions at the end of the coming
sections. Putting forward up to date open questions is useful, because
the proof of the SPGT somehow took back the motivation for many
questions, in particular about classes of perfect graphs whose study
was supposed to give insight about the big open question.

Let us start now with an open question that obviously has the same
flavour as the SPGT. Gy\'arf\'as~\cite{gyarfas:perfect} proposed the
following generalization of perfect graphs.  A graph is
\emph{$\chi$-bounded by a function $f$} if every induced subgraph $H$
of $G$ satisfies $\chi(H) \leq f(\omega(H))$.  Hence, perfect graphs
are $\chi$-bounded by the identity function.  There might exist a
short proof that for some function $f$ (possibly fast increasing),
all Berge graphs are $\chi$-bounded by $f$, but so far, the proof of
the SPGT is the only known proof of this fact.
In~\cite{gyarfas:perfect} the following conjecture is stated.  It is
still open, but several attempts led to beautiful results, such as
Scott~\cite{Scott99a} and Chudnovsky, Robertson, Seymour and
Thomas~\cite{chudnovsky.r.s.t:k4}.

\begin{conjecture}[Gy\'arf\'as 1987]
  \label{c:oddHole}
  There exists a function $f$ such that for all graphs $G$, if $G$ has
  no odd hole, then $G$ is $\chi$-bounded by $f$.
\end{conjecture}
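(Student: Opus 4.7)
The plan is to proceed by induction on $\omega(G)$, defining $f$ recursively. For $\omega(G)\le 2$ the graph $G$ is triangle-free; a shortest odd cycle in $G$ would then have to be induced (any chord would split it into two shorter cycles whose lengths sum to an odd number plus~$2$, one of which is odd) and of length at least~$5$, hence an odd hole, contradicting the hypothesis. So $G$ is bipartite, and $f(1)=1$, $f(2)=2$ suffice. Assume inductively that all odd-hole-free graphs with clique number less than $k$ are $\chi$-bounded by some function $g$, and let $G$ be odd-hole-free with $\omega(G)=k$.

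For the induction step I would attempt a Gy\'arf\'as-style levelling argument. Pick a vertex $v$ and let $L_i$ be the set of vertices at distance exactly $i$ from $v$ in $G$. The first layer $L_1=N(v)$ satisfies $\omega(G[L_1])\le k-1$, so by induction $\chi(G[L_1])\le g(k-1)$. The hope is that the absence of odd holes is strong enough to force $\omega(G[L_i])\le k-1$ for every $i\ge 1$, or at worst to bound the chromatic number of the bipartite graph between $L_i$ and $L_{i+1}$ in terms of $g(k-1)$; one could then combine a colouring of the union of odd-indexed layers with a disjoint colouring of the union of even-indexed layers into a proper colouring of $G$ using a bounded multiple of $g(k-1)$ colours.

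The main obstacle is that this last hope fails: a copy of $K_k$ to which one attaches a pendant induced path of length~$3$ is odd-hole-free, yet BFS from the pendant endpoint places the entire $K_k$ inside a single layer, so $\omega(G[L_i])$ need not drop below $k$, and the inductive bound simply does not apply to deep layers. A successful attack therefore seems to need either a replacement of BFS by shortest induced paths together with an argument that controls their attached neighbourhoods (in the spirit of Scott's work \cite{Scott99a}), or a full-blown decomposition theorem for odd-hole-free graphs analogous to Theorem~\ref{th:dec}. This is the point at which I would expect the plan to break down without a genuinely new structural idea, which is consistent with the conjecture still being open at the time of writing.
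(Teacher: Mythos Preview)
The paper does not give a proof of this statement: it is presented explicitly as an open conjecture, with the sentence ``It is still open'' immediately preceding it. So there is no proof in the paper to compare your attempt against.

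Your proposal is honest about its own failure, and the diagnosis is accurate. The BFS layering argument does give the base case $\omega\le 2$ correctly (triangle-free plus no odd hole forces bipartite), but the inductive step collapses exactly where you say it does: there is no reason for $\omega(G[L_i])$ to drop below $\omega(G)$ in layers far from the root, and your pendant-path example shows this cleanly. The hope that the bipartite graph between consecutive layers might be controllable by $g(k-1)$ is also unfounded without further structure --- nothing in the odd-hole-free hypothesis prevents dense interaction between layers. So the induction on $\omega$ via naive levelling genuinely has no leverage here, and you are right that something of the order of Scott's techniques or a decomposition theorem would be needed. At the time the survey was written no such argument was known; your conclusion that the plan breaks down for structural reasons, rather than for lack of cleverness in the bookkeeping, is the correct one.
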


\subsection*{Terminology}

By the SPGT, perfect graphs and Berge graphs form the same
class. Still, we use \emph{Berge} when we mostly rely on excluding
holes and antiholes, and \emph{perfect} when we mostly rely on
colourings.  Anyway, this distinction has to be kept when we sketch
the proof of the SPGT.  A \emph{minimally imperfect graph} is an
imperfect graph every proper induced subgraph of which is perfect.  A
restatement of the SPGT is that minimally imperfect graphs are
precisely the odd holes and antiholes.  Many statements about minimal
imperfect graphs are therefore trivial to check by using the SPGT, but
when proving the SPGT, it is essential to prove them by other
means. Observe that a minimal (or minimum) counter-example to the SPGT
has to be a Berge minimally imperfect graph.

We mostly follow the terminology from~\cite{chudnovsky.r.s.t:spgt}
that is sometimes not fully standard.  Since all this theory deals
with induced subgraphs, we write \emph{$G$ contains $H$} to mean that
$H$ is an induced subgraph of $G$.  We simply write \emph{path}
instead of chordless path or induced path.  When $a$ and $b$ are
vertices of a path $P$, we denote by $aPb$ the subpath of $P$ whose
ends are $a$ and $b$.  A subset $A$ of $V(G)$ is \emph{complete} to a
subset $B$ of $V(G) $ if $A$ and $B$ are disjoint and every vertex of
$A$ is adjacent to every vertex of $B$ (we also say that $B$ is
\emph{$A$-complete}).

We use the prefix \emph{anti} to mean a property or a structure of the
complement (like in holes and antiholes).  For instance, an
\emph{antipath} in $G$ is a path in $\overline{G}$; $A$ is
\emph{anticomplete} to $B$ means that no edge of $G$ has an end in $A$
and the other one in $B$; a graph $G$ is \emph{anticonnected} if its
complement is connected.  By $C(A)$ we mean the set of all
$A$-complete vertices in $G$ and by $\overline{C}(A)$ the set of all
$A$-anticomplete vertices.

By \emph{colouring} of a graph, we mean an optimal colouring of the
vertices. 

\subsection*{Further reading}

Since we focus on the SPGT and its proof, most of this survey is on
the structure of Berge graphs, and we omit many other important (more
important?)  aspects of the theory of perfect graphs, such as the
ellipso\"id method used by Gr\"ostchel, Lov{\'a}sz and
Schrijver~\cite{grostchel.l.s:color} to colour in polynomial time any
input perfect graph.  The theory of semi-definite programming started
there.

Several previous surveys exist (and we try not to overlap them too
much).  As far as we know, all of them were written before of just
after the proof of the SPGT.  The survey of
Lov\'asz~\cite{lovasz:pgsurvey} from the 1980s is still a good
reading.  Two books are completely devoted to perfect
graphs~\cite{berge.chvatal:topics,livre:perfectgraphs}, and contain a
lot of material that will be cited in what follows.  Part VI of the
treatise of Schrijver~\cite{schrijver:opticomb} is a good survey on
perfect graphs (it is the most comprehensive).  Chudnovsky, Robertson,
Seymour and Thomas~\cite{chudvovsky.r.s.t:progress} wrote a good
survey just after their proof.  Roussel, Rusu and
Thuillier~\cite{RousselRT09} wrote a good survey about the long
sequence of results, attempts and conjectures that finally led to the
successful approach.

About more historical aspects, see Section 67.4g in
Schrijver~\cite{schrijver:opticomb}.  Berge and Ram{\'{i}}rez
Alfons{\'{i}}n~\cite {berge.r:origin} wrote an article about the
origin of perfect graphs.  Seymour~\cite{seymour:how} wrote an
article, that is interesting even to a non-mathematician, telling the
story of how the SPGT was proved.

\section{Lov\'asz's perfect graph theorem}
\nocite{livre:perfectgraphs}

As pointed out by Preissmann and Seb\H
o~\cite{preissmann.sebo:minimal}, the following conveniently gives a
weak \ref{i:f2} and a strong \ref{i:f3} characterization of perfect
graphs.  Hence, to prove perfection, checking the weak one is enough,
while to use perfection, one may rely on the strong one.  This allows
a kind of leverage that is used a lot in the sequel.

\begin{lemma}[Folklore]
  \label{l:simpleC}
  Let $G$ be a graph.  The three statements below are equivalent.
  \begin{enumerate}
  \item\label{i:f1} $G$ is perfect.
  \item\label{i:f2} For every induced subgraph $H$ of $G$ and every
    $v\in V(H)$, $H$ contains a stable set that contains $v$ and intersects
    all maximum cliques of $H$.
  \item\label{i:f3} For every induced subgraph $H$ of $G$, $H$ contains a
    stable set that intersects all maximum cliques of $H$.
  \end{enumerate}
\end{lemma}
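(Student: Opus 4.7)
The implication \ref{i:f2} $\Rightarrow$ \ref{i:f3} is immediate: given any induced subgraph $H$, pick an arbitrary $v\in V(H)$ and apply \ref{i:f2}. So the work lies in \ref{i:f3} $\Rightarrow$ \ref{i:f1} and \ref{i:f1} $\Rightarrow$ \ref{i:f2}, and I plan to handle these two in turn.

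For \ref{i:f3} $\Rightarrow$ \ref{i:f1}, my plan is induction on $|V(H)|$, for $H$ ranging over induced subgraphs of $G$. The base case ($H$ empty or edgeless) is trivial. For the inductive step, apply \ref{i:f3} to $H$ to obtain a stable set $S$ meeting every maximum clique of $H$. The key identity to establish is $\omega(H\setminus S) = \omega(H)-1$: the upper bound holds because $S$ hits every maximum clique, and the lower bound because deleting from any maximum clique of $H$ a single vertex of $S$ yields a clique of size $\omega(H)-1$ disjoint from $S$. Since $H\setminus S$ is an induced subgraph of $G$, the inductive hypothesis gives $\chi(H\setminus S) = \omega(H)-1$; colouring $S$ with one fresh colour then yields $\chi(H)\leq \omega(H)$, matched by the trivial reverse inequality.

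For \ref{i:f1} $\Rightarrow$ \ref{i:f2}, let $H$ be an induced subgraph of $G$ and $v\in V(H)$, and set $\omega=\omega(H)$. By perfection, $\chi(H)=\omega$, so $V(H)$ partitions into stable sets $S_1,\dots,S_\omega$. Any maximum clique of $H$ consists of $\omega$ pairwise adjacent vertices, which must all receive distinct colours; since only $\omega$ colours are used, every maximum clique meets each $S_i$ in exactly one vertex. In particular, the unique $S_i$ containing $v$ is a stable set through $v$ intersecting all maximum cliques, as required.

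The only nontrivial bookkeeping is the equality $\omega(H\setminus S)=\omega(H)-1$ in the first implication, which is essential to make the induction drop the clique number by exactly one; everything else is a one-line observation. Beyond that, I anticipate no real obstacle.
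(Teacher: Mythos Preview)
Your proposal is correct and follows essentially the same approach as the paper: for \ref{i:f1}$\Rightarrow$\ref{i:f2} both arguments take the colour class containing $v$ in an optimal colouring and observe it must meet every maximum clique, and for \ref{i:f3}$\Rightarrow$\ref{i:f1} your induction is just the recursive unrolling of the paper's greedy algorithm (``remove a stable set hitting all maximum cliques, repeat''), with your equality $\omega(H\setminus S)=\omega(H)-1$ being exactly the paper's observation that $\omega$ decreases at each step. The only cosmetic difference is that the paper argues \ref{i:f1}$\Rightarrow$\ref{i:f2} by contradiction ($\chi(H\setminus S)\geq\omega(H)$ if $S$ misses a maximum clique) whereas you count directly.
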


\begin{proof}
  To prove that \ref{i:f1} implies \ref{i:f2}, consider an optimal
  colouring of $H$ and the colour class $S$ that contains $v$.  So $S$
  is a stable set, and it must intersect all maximum cliques of $H$
  for otherwise $\chi(H\setminus S) \geq \omega(H)$, a contradiction.
  Trivially, \ref{i:f2} implies \ref{i:f3}.  To prove that \ref{i:f3}
  implies \ref{i:f1}, consider the following greedy colouring
  algorithm: (step 1) set $i \leftarrow 1$ and $H \leftarrow G$; (step 2) while
  $H$ is non-empty, consider a stable set of $H$ as in \ref{i:f3},
  give it colour $i$, and set $H \leftarrow H \setminus S$, $i \leftarrow
  i+1$.  Since $\omega$ decreases at each step, this algorithm
  produces a colouring of $G$ with $\omega(G)$ colours, and it can be
  processed for any induced subgraph of $G$.  Thus, $G$ is perfect.
\end{proof}

\begin{figure}[h]
  \center
  \includegraphics{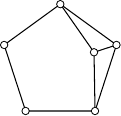}\hspace{3em}
  \includegraphics{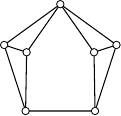}
  \caption{Replication: $G$ and $G'$\label{fig:rep}}
\end{figure}

\emph{Replicating} a vertex $v$ of a graph $G$ means adding a new
vertex $v'$ adjacent to $v$ and all neighbors of $v$.  As an example,
consider the (non-perfect) graph $G$ obtained by replicating one
vertex of $C_5$.  Clearly, $\chi(G) = \omega(G) = 3$ (see
Fig.~\ref{fig:rep}).  However, by replicating any vertex of degree~2
in $G$, a graph $G'$ such that $\chi(G') = 4 > \omega(G') = 3$ is
obtained.  This shows that the property $\chi = \omega$ is not
preserved by replication.  Therefore, the following lemma due to
Lov\'asz~\cite{lovasz:nh,lovasz:nh2} and known as the
\emph{replication lemma} is more surprising than it may look at first
glance.

\begin{lemma}[Lov\'asz 1972]
  \label{l:rep}
  Perfection is preserved by replication.
\end{lemma}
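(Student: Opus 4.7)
The plan is to use induction on $|V(G)|$, handling proper induced subgraphs by induction/inheritance and reducing the problem to showing $\chi(G')=\omega(G')$. The crucial leverage will come from using characterization \ref{i:f2} of Lemma~\ref{l:simpleC}, which lets us choose a stable set in $G$ that \emph{contains a prescribed vertex}, namely $v$.

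First I would deal with proper induced subgraphs $H$ of $G'$. If $H$ avoids $v'$ then $H$ is an induced subgraph of $G$, hence perfect. If $H$ contains $v'$ but not $v$, then $H$ is isomorphic (via swapping $v'$ for $v$) to an induced subgraph of $G$, hence perfect. If $H$ contains both $v$ and $v'$, then $H$ is obtained from the induced subgraph $H\setminus\{v'\}$ of $G$ by replicating $v$; since $|H\setminus\{v'\}|<|V(G)|$, the induction hypothesis applied to the perfect graph $H\setminus\{v'\}$ yields that $H$ is perfect. It remains to prove $\chi(G')=\omega(G')$.

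I would split into two cases. If $v$ lies in some maximum clique $K$ of $G$, then $K\cup\{v'\}$ is a clique of $G'$ of size $\omega(G)+1$, and one checks easily that this is optimal, so $\omega(G')=\omega(G)+1$; an $\omega(G)$-colouring of $G$ together with a fresh colour on $v'$ gives the required colouring. Otherwise $v$ is in no maximum clique of $G$, and one verifies $\omega(G')=\omega(G)$ (because any clique of $G'$ containing both $v$ and $v'$ reduces, upon removing $v'$, to a non-maximum clique of $G$ through $v$). Here I would invoke Lemma~\ref{l:simpleC}\ref{i:f2} on $G$ and $v$ to produce a stable set $S\ni v$ of $G$ meeting every maximum clique of $G$, and set $S'=(S\setminus\{v\})\cup\{v'\}$.

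The verification then splits into two checks. Since $v'$ is adjacent in $G'$ exactly to $v$ and to $N_G(v)$, and no vertex of $S\setminus\{v\}$ lies in $N_G(v)$ (as $S$ is stable and contains $v$), the set $S'$ is stable in $G'$. Every maximum clique of $G'$ either contains $v'$, in which case $S'$ meets it via $v'$, or avoids $v'$ and is therefore a maximum clique of $G$ not containing $v$, in which case $S$ meets it via a vertex other than $v$, and that vertex lies in $S'$. Thus $G'\setminus S'$ has clique number at most $\omega(G')-1$, is a proper induced subgraph, hence perfect by the first paragraph, giving $\chi(G'\setminus S')\le\omega(G')-1$; adding a colour for $S'$ concludes. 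The main conceptual obstacle is to see that plain perfection of $G$ is not enough: one really needs the ``pointed'' stable set from \ref{i:f2} so that swapping $v$ out for its clone $v'$ produces a stable set that simultaneously hits the old maximum cliques avoiding $v$ and the new ones born of the twin pair $\{v,v'\}$.
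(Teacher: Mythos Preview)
Your proof is correct and rests on the same key lever as the paper's: the ``pointed'' stable set from Lemma~\ref{l:simpleC}\ref{i:f2}. The paper packages the argument more economically, though. Instead of setting up an induction on $|V(G)|$, splitting into the cases ``$v$ in a maximum clique or not'', and swapping $v$ out for $v'$, the paper verifies characterization~\ref{i:f3} directly for an arbitrary induced subgraph $H$ of $G'$: if $H$ contains at most one of $v,v'$ this is immediate from the perfection of $G$; if $H$ contains both, apply~\ref{i:f2} to the perfect graph $H\setminus v'$ to get a stable set $S\ni v$ meeting all its maximum cliques, and observe that $S$ itself already meets every maximum clique of $H$, since a maximum clique of $H$ contains $v$ if and only if it contains $v'$. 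This single observation replaces your case split and makes the swap to $S'$ unnecessary (though your $S'$ works too). Both arguments are short; the paper's just avoids the scaffolding of the outer induction.
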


\begin{proof}
  Let $G$ be a perfect graph and $G'$ a graph obtained from $G$ by
  replicating a vertex $v$.  We show that $G'$ satisfies the
  characterization \ref{i:f3} from Lemma~\ref{l:simpleC}.  Let $H$ be
  an induced subgraph of $G'$.  We look for a stable set $S$ that
  intersects all maximum cliques of $H$.  If $H$ contains at most one
  vertex from $\{v, v'\}$, then it is isomorphic to a an induced
  subgraph of $G$, so clearly $S$ exists.  Otherwise, $H\setminus v'$
  is perfect, so, by charactization \ref{i:f2}, there exists a stable
  set $S$ that contains $v$ and intersects all maximum cliques of $H
  \setminus v'$.  In fact $S$ intersects all maximum cliques of $H$,
  because a maximum clique of $H$ contains $v$ if and only if it
  contains $v'$.
\end{proof}

The following was conjectured by Berge as the \emph{weak perfect graph
  conjecture} and is now called the \emph{perfect graph theorem}.  We
give the proof that, among the available ones, we feel the most
related to the rest of this survey.  The \emph{stable set number} of a
graph $G$, denoted by $\alpha(G)$, is the maximum number of pairwise
non-adjacent vertices.  Observe that for all graphs $G$, $\alpha(G) = \omega
(\overline{G})$.  

\begin{theorem}[Lov\'asz 1972]
  \label{th:pgt}
  If a graph is perfect, then its complement is perfect. 
\end{theorem}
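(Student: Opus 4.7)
The plan is to use the strong characterization from Lemma~\ref{l:simpleC}\ref{i:f3}. Proving that $\overline{G}$ is perfect amounts to showing that every induced subgraph $\overline{H}$ of $\overline{G}$ has a stable set meeting all its maximum cliques; by complementation this is the same as showing that every induced subgraph $H$ of $G$ contains a \emph{clique} meeting every maximum stable set of $H$. Since every induced subgraph of the perfect graph $G$ is itself perfect, it suffices to establish the single claim: \emph{every perfect graph has a clique meeting all its maximum stable sets.}

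I would prove this by contradiction via a replication gadget. Let $F$ be a perfect graph in which every clique misses some maximum stable set; list the maximum stable sets as $S_1,\dots,S_k$, and put $h(v)=|\{i:v\in S_i\}|$ for every $v\in V(F)$. Form $F^*$ from $F$ by deleting each vertex with $h(v)=0$ and then replicating every remaining vertex $v$ until it has exactly $h(v)$ copies. By Lemma~\ref{l:rep}, applied iteratively, and using that induced subgraphs of perfect graphs are perfect, $F^*$ is perfect.

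I would then compare three parameters of $F^*$. A direct count gives $|V(F^*)|=\sum_v h(v)=\sum_i |S_i|=k\,\alpha(F)$. Since the copies of a single vertex form a clique in $F^*$, any stable set of $F^*$ uses at most one copy per class, hence projects to a stable set of $F$, yielding $\alpha(F^*)=\alpha(F)$. Finally, for any clique $K$ of $F$, $\sum_{v\in K}h(v)=\sum_i|K\cap S_i|\le k-1$, because each intersection has size at most one and, by assumption, some $i$ satisfies $K\cap S_i=\emptyset$; this gives $\omega(F^*)\le k-1$. Perfection of $F^*$ then yields $\chi(F^*)=\omega(F^*)\le k-1$, whence $|V(F^*)|\le\chi(F^*)\,\alpha(F^*)\le (k-1)\alpha(F)$, contradicting $|V(F^*)|=k\,\alpha(F)>0$.

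The only nontrivial step is choosing the multiplicities $h(v)$: once the right blow-up is on the table, Lemma~\ref{l:rep} does all the heavy lifting, and the bounds $\omega(F^*)\le k-1$, $\alpha(F^*)=\alpha(F)$, and $|V(F^*)|=k\,\alpha(F)$ collide almost automatically. Translating Lemma~\ref{l:simpleC}\ref{i:f3} applied to $\overline{G}$ into a clique-hitting statement about $G$, and verifying the three elementary identities for $F^*$, are routine bookkeeping.
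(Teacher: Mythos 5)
Your proof is correct and follows essentially the same route as the paper: you blow up each vertex into as many copies as the number of maximum stable sets containing it (the paper's $\alpha_v$ replications), invoke Lemma~\ref{l:rep}, and conclude via condition~\ref{i:f3} of Lemma~\ref{l:simpleC} applied to the complement. The only cosmetic difference is that you argue by contradiction through the count $|V(F^*)|\le \chi(F^*)\,\alpha(F^*)$ with $\omega(F^*)\le k-1$, whereas the paper directly extracts a clique of size $k$ from an optimal colouring of the blown-up graph given by the $k$ disjoint maximum stable sets covering it.
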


\begin{proof}
  Let $G$ be a perfect graph.  Construct $G'$ as follows: start from
  $G$ and replicate $\alpha_v - 1$ times every vertex $v$, where
  $\alpha_v$ is the number of maximum stables sets of $G$ that contain
  $v$.  Note that replicating $-1$ time means deleting the vertex,
  and replicating $0$ times means doing nothing.  From its
  construction, $G'$ can be covered by $k$ disjoint maximum stable
  sets, that therefore form an optimal colouring of $G'$.  Since $G'$
  is perfect by Lemma~\ref{l:rep}, if follows that $G'$ has a clique
  $K'$ of size $k$.  Since a clique and a stable set intersect in at
  most one vertex, $K'$ intersect all maximum stable sets of $G'$.
  Now construct a clique $K$ of $G$, by taking for each vertex of $K'$
  the vertex of $G$ it is replicated from.  The clique $K$ that we
  obtain intersects all maximum stable sets of $G$.  By the same lines,
  a clique intersecting all maximum stable sets can be found in any
  induced subgraph of $G$.  Hence the complement of $G$ satisfies
  condition \ref{i:f3} of Lemma~\ref{l:simpleC} and is therefore
  perfect.
\end{proof}

In \cite{gyarfas:perfect}, the question of generalizing
Theorem~\ref{th:pgt} is discussed.  Since~\cite{gyarfas:perfect},
hardly any progress occurred in this direction.  In particular the
following neat generalization of Theorem~\ref{th:pgt} is still open.

\begin{conjecture}[Gy\'arf\'as 1987]
  There exists a function $f$ such that for all graphs $G$, if $G$ is
  $\chi$-bounded by $x\mapsto x+1$, then its complement is
  $\chi$-bounded by $f$.
\end{conjecture}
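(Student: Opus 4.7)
My plan is to try to imitate the proof of Theorem~\ref{th:pgt}. Let $G$ be a graph every induced subgraph $H$ of which satisfies $\chi(H)\le\omega(H)+1$. Via characterization~\ref{i:f3} of Lemma~\ref{l:simpleC} applied to $\overline{G}$, it would suffice to show that every induced subgraph of $G$ contains a union of boundedly many (in $\alpha(G)$) cliques meeting every maximum stable set; peeling off such a union as one colour class of $\overline{G}$ and iterating would then $\chi$-bound $\overline{G}$ in terms of $\omega(\overline{G})=\alpha(G)$.

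The first attempt would be a direct replication construction: form $G'$ from $G$ by replicating each vertex $v$ exactly $\alpha_v$ times, where $\alpha_v$ is the number of maximum stable sets of $G$ containing $v$; then $G'$ admits a partition into $k$ maximum stable sets, where $k$ is the number of maximum stable sets of $G$, and in particular $\chi(G')\le k$. If an analogue of Lemma~\ref{l:rep} showed that $G'$ still satisfies $\chi\le\omega+1$ everywhere (or with some controlled slack), $G'$ would contain a clique of size at least $k-1$; projecting this clique back to $G$ would give a single clique meeting all but a bounded number of maximum stable sets, and the scheme outlined above would close.

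The main obstacle, and presumably the reason the conjecture is still open, is that replication genuinely does not preserve the hypothesis: a single replication can increase $\chi(H)-\omega(H)$ by one on some induced subgraph $H$, and iterating $\alpha_v-1$ times per vertex can blow the gap up to a quantity of order $\alpha(G)$ --- exactly the quantity we are trying to control. I do not see how to rescue the argument without a weighted or fractional version of Lemma~\ref{l:rep} that is quantitatively sensitive to the current gap, or a polyhedral argument bounding the integrality gap of the stable set polytope of $G$ under the hypothesis and rounding a fractional clique cover of $\overline{G}$. A deeper obstruction is that the property ``$\chi\le\omega+1$ hereditarily'' is not self-complementary even conjecturally, so the symmetry used so efficiently in the proof of Theorem~\ref{th:pgt} is unavailable, and any proof seems likely to require genuinely new structural or polyhedral input.
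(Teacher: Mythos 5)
You have not proved the statement, but that is exactly the right outcome here: in the paper this is not a theorem with a proof, it is stated as an open conjecture (Gy\'arf\'as 1987), and the surrounding text notes that hardly any progress has been made on it since~\cite{gyarfas:perfect}. So there is no proof of the paper's to compare yours against, and a complete ``blind proof'' would have been suspect.

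Your diagnosis of why the natural attack fails is sound and matches the discussion in the paper. The proof of Theorem~\ref{th:pgt} leans entirely on Lemma~\ref{l:rep}, i.e.\ on the fact that \emph{perfection} survives replication; the paper's own example (replicating vertices of $C_5$, Fig.~\ref{fig:rep}) shows that the weaker property $\chi\leq\omega+1$ does not, and iterated replication (equivalently, blowing up a $C_5$) makes $\chi-\omega$ grow linearly in the blow-up factor, so the hypothesis is genuinely destroyed before you can extract the clique of size $k$ meeting all maximum stable sets. Your reduction scheme itself is fine as far as it goes: if every induced subgraph $H$ contained a union of at most $g(\alpha(H))$ cliques meeting every maximum stable set of $H$, then deleting it drops $\alpha$ by at least one and iterating would give $\theta(H)\leq\alpha(H)\,g(\alpha(H))$, which is a $\chi$-bound for $\overline{G}$ via condition~\ref{i:f3}-style peeling; the missing ingredient is precisely the existence of such cliques, and that is where the conjecture remains open. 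Your closing observation that the hypothesis is not self-complementary (which is the whole content of the conjecture) correctly explains why the symmetry exploited in Theorem~\ref{th:pgt} is unavailable. In short: no error, but also no proof --- and none is expected.
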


\subsection*{Further reading}

The perfect graph theorem has a polyhedral proof found by
Fulkerson~\cite{fulkerson:antiblocking} related to polyhedral
characterizations of perfect graphs discovered by
Fulkerson~\cite{fulkerson:antiblocking} and
Chv\'atal~\cite{chvatal:75}.  Lov\'asz~\cite{lovasz:pg} proved a deep
characterization of perfect graphs suggested by Hajnal: a graph $G$ is
perfect if and only if every induced subgraph $H$ of $G$ satisfies
$\alpha(H) \omega(H) \geq |V(H)|$ (this can be proved by a simple
argument relying on linear algebra discovered by Gasparian
\cite{gasparian:minimp}, see also
\cite{diestel:graph,bondy.murty:book}, and Section~\ref{sec:col} below
for applications of linear algebraic methods to perfect graphs).  This
characterization implies that deciding the perfection of an input
graph is a CoNP problem (see \cite{preissmann.sebo:minimal} for more
about that).  Since the characterization is self-complementary, it
gives another proof of the perfect graph theorem.  This
characterization is the starting point of many developments of great
signifiance, such as the theory of partitionable graphs (see the
survey of Preissmann and Seb\H o~\cite{preissmann.sebo:minimal}).  It
has deep connections with combinatorial optimization as explained in a
book of Cornu\'ejols~\cite{cornuejols:packing}.

\section{Basic graphs}
\label{s:basic}

In this section, we survey the five basic classes that are used in the
proof of the SPGT.  We denote by $\theta(G)$ the chromatic number of
$\overline{G}$, by $\nu(G)$ the maximum size of a matching in $G$, by
$\Delta(G)$ the maximum degree of a vertex in $G$, by $\tau(G)$ the
minimum number of edges of $G$ needed to cover all vertices of $G$,
and by $\chi'(G)$ the minimum number of colours needed to assign a
colour to each \emph{edge} of $G$ in such a way that adjacent edges
receive different colours.  Bipartite graphs are easily checked to be
perfect.  So, by Theorem~\ref{th:pgt}, their complements are also
perfect, which can be restated as `every bipartite graph $G$ satisfies
$\theta(G) = \alpha(G)$'.  Since for any triangle-free graph $G$,
$|V(G)| = \theta(G) + \nu(G) = \alpha(G) + \tau(G)$, we obtain that
every bipartite graph $G$ satisfies $\nu(G) = \tau(G)$.  This can be
rephrased as: `the complements of line graphs of bipartite graphs are
perfect'.  By applying again Theorem~\ref{th:pgt}, we obtain that line
graphs of bipartite graphs are perfect, which can be restated as
`every bipartite graph $G$ satisfies $\Delta(G) = \chi'(G)$'.  Hence,
Theorem~\ref{th:pgt} implies the perfection of three among the four
\emph{historical basic} classes of perfect graphs: bipartite graphs,
their complements, their line graphs, the complements of their line
graphs.  Interestingly, this was all proved directly by K\H
onig~\cite{konig:31,konig:16}, long  before the definition of
perfect graphs.

We now turn our attention to a less classical class that is first
presented in the proof of the SPGT: \emph{double split graphs}.  As it
is presented in~\cite{chudnovsky.r.s.t:spgt}, the class is not closed
under taking induced subgraphs, which is sometimes not convenient.  So
we prefer here to define directly \emph{doubled graphs}, that are
easily seen to form the class of induced subgraphs of double split
graphs (as defined in~\cite{chudnovsky.r.s.t:spgt}).

A \emph{good partition} of a graph $G$ is a partition $(X, Y)$ of
$V(G)$ (possibly, $X=\emptyset$ or $Y=\emptyset$) such that:

\begin{itemize}
\item Every component of $G[X]$ has at most two vertices, and every
  anticomponent of $G[Y]$ has at most two vertices.
\item For every component $C_X$ of $G[X]$, every anticomponent $C_Y$ of
  $G[Y]$, and every vertex $v$ in $C_X \cup C_Y$, there exists at most
  one edge and at most one antiedge between $C_X$ and $C_Y$ that is
  incident to $v$.
\end{itemize}

A graph is \emph{doubled} if it has a good partition (for the sake of
completeness, let us mention that a \emph{double split graph} is a
doubled graph such that $G[X]$ (resp.\ $G[Y]$) has at least two
components (resp. anticomponents) and all components (resp.\
anticomponents) of $G[X]$ (resp. $G[Y]$) have two vertices).  Doubled
graphs are easily seen to be perfect by a direct colouring argument.
They are closed under taking induced subgraphs and complements.  A
graph is \emph{basic} if it belongs to at least one of the five
classes defined here: bipartite, complement of bipartite, line graph
of bipartite, complement of line graph of bipartite, and doubled
graphs.  

For each basic class, the characterization by excluding induced
subgraphs is known (see Beineke~\cite{beineke:linegraphs} for line
graphs and Alexeev, Fradkin and Kim~\cite{alexeevFK:doubled} for
doubled graphs); the recognition can be performed in polynomial time
(see Lehot~\cite{lehot:root} or
Roussopoulos~\cite{roussopoulos:linegraphe} for line graphs and
Maffray~\cite{maffray:13} for doubled graphs).  Also the colouring and
maximum clique problems can be solved in polynomial time (see
Schrijver~\cite{schrijver:opticomb} for the `historical classes' and
Maffray~\cite{maffray:13} for doubled graphs).  Note that the paper of
Maffray~\cite{maffray:13} answers a question asked in a previous
version of the present survey.

\subsection*{Further reading}

In this section, we focused on the basic graphs that play an important
role in the proof of the SPGT.  But any class of graphs whose
perfection is simple to prove can potentially serve as a basic class
of a decomposition theorem, so all classes are potentially of
interest.  The book of Brandst\"adt, Le and
Spinrad~\cite{brandstadt:classes} is on general graph classes, but
contains a lot of material on perfect graphs.  The most complete
catalog of classes of perfect graphs seems to be written by
Hougardy~\cite{hougardy:class}, that describes 120 classes.  Also
Chapter~66 in Schrijver~\cite{schrijver:opticomb} contains a very
complete survey about classes of perfect graphs.  The book by
Golumbic~\cite{golumbic:perfect} surveys algorithmic aspects of
several classes of perfect graphs.

Perhaps the most important class that we omit to present here is the
seminal class of hole-free graphs, known as \emph{chordal} graphs and
introduced by Dirac~\cite{dirac:chordal} and
Gallai~\cite{gallai:triangule}.  It is the first class with a
decomposition theorem, has some connections with the graph minors
theory and tree-width, and is also important in fast graph searching
algorithms.  About that, a good starting point is Sections 9.7--9.8 in
the book of Bondy and Murty~\cite{bondy.murty:book}.  Another
important class, that has connections with ordered sets, is the class
of comparability graphs introduced by
Gallai~\cite{gallai:comparabilite} (the English translation by Maffray
and Preissmann~\cite{maff.preis:gallai} contains a short survey).

\section{Decompositions}
\label{s:dec}

By \emph{decomposition} of a graph we mean a way to partition its
vertices with some prescribed adjacencies.  A decomposition is
\emph{useful} if it can be proved that a minimum counter-example to the
SPGT cannot admit the decomposition.  Indeed, suppose that we can
prove a statement such as: `every Berge graph is either basic or has
a useful decomposition'.  The SPGT can then be proved as follows:
consider a minimum counter-example, i.e.\ a Berge graph, imperfect, and
of minimum size. It does not admit the decomposition (because the
decomposition is useful), and since it is imperfect, it cannot be
basic, a contradiction to the statement.

The simplest useful decomposition, first observed in this context by
Gallai~\cite{gallai:triangule}, is the \emph{clique cutset}, that is a
clique whose removal yields a disconnected graph. It is easily seen to
be useful, because gluing two perfect graphs along a clique yields a
perfect graph.

In a graph $G$, \emph{substituting} a graph $H$ for a vertex $v$,
means deleting $v$, adding a copy of $H$, making every neighbor of $v$
complete to $H$, and every non-neighbor of $v$ anticomplete to $H$.
Along the lines of the proof of Lemma~\ref{l:rep}, it is easy to
prove that substituting a perfect graph for a vertex of a perfect
graph yields a perfect graph (this is therefore a variant of Lov\'asz's
replication lemma).  It follows easily that a minimally imperfect
graph does not admit a homogeneous set, where a \emph{homogeneous set}
of a graph $G$ is a set $H \subseteq V(G)$ such that $1 < |H| <
|V(G)|$ and every vertex of $G\setminus H$ is either complete or
anticomplete to $H$.

A \emph{1-join} of a graph $G$, first defined by
Cunningham~\cite{cunningham:82}, is a partition $(X, Y)$ of $V(G)$
such that $|X| \geq 2$, $|Y| \geq 2$, and there exist $A\subseteq X$
and $B \subseteq Y$ such that $A$ is complete to $B$ and no other
edges exist from $X$ to $Y$.  Again, 1-joins can be proved to be
useful (this is proved by Cunningham~\cite{cunningham:82},
Bixby~\cite{bixby:84} and it also follows from Lemma~\ref{th:sc}
below).

Note that clique cutsets, homogeneous sets and 1-joins do not appear
explicitly in Theorem~\ref{th:dec}.  This is because they are not
formally necessary, since their presence implies that the graph is
basic or has another decomposition (namely the balanced skew
partition, to be defined soon).  We mention them because they are
somehow present `implicitly'; this sometimes shows up naturally in
attempts to use Theorem~\ref{th:dec} for algorithmic purpose.  We now
turn our attention to the decompositions that are actually used in
Theorem~\ref{th:dec}.  For each definition, we use the definition
from~\cite{chudnovsky.r.s.t:spgt}, that sometimes differ slightly from
the definition given in the paper where the decomposition is first
presented (and where the proof of the usefulness of the decomposition
is given).

A \emph{2-join} of a graph $G$, first defined by Cornu\'ejols and
Cunningham~\cite{cornuejols.cunningham:2join}, is a partition $(X_1,
X_2)$ of $V(G)$ such that there exist disjoint non-empty sets $A_1,
B_1 \subseteq X_1$, $A_2, B_2 \subseteq X_2$ satisfying:

\begin{itemize}
\item $A_1$ is complete to $A_2$, $B_1$ is complete to $B_2$ and these
  edges are the only ones between $X_1$ and $X_2$;
\item $|X_i| \geq 3$, $i=1, 2$;
\item every component of $G[X_i]$ intersects $A_i$ and $B_i$, $i=1,
  2$; and
\item if $|A_i| = |B_i| = 1$, then $G[X_i]$ is not a path of length
  two joining the members of $A_i$ and $B_i$, $i=1, 2$.
\end{itemize}

Cornu\'ejols and Cunningham~\cite{cornuejols.cunningham:2join} proved
that a minimally imperfect graph admiting a 2-join must be an odd hole
(so, 2-joins are useful).  A reader who pays attention to
technicalities may notice that here, \emph{path
  2-joins\label{pth2join}} are allowed (these are 2-joins such that
for some $i\in \{1, 2\}$, $|A_i|=|B_i|=1$ and $G[X_i]$ is a path from
the unique vertex in $A_i$ to the unique vertex in $B_i$).  Some
papers (mostly, these cosigned by Conforti, Cornu\'ejols or Vu\v
skovi\'c) restrict the notion of 2-joins to \emph{non-path 2-joins}.
In Section~\ref{sec:trigraphs}, the relevance of excluding path
2-joins is discussed.  When $X_i$, $A_i$ and $B_i$ are as above, it is
customary to set $C_i = X_i \setminus (A_i \cup B_i)$.  It is easy to
prove that in Berge graphs, all paths from $A_i$ to $B_i$ with
interior in $C_i$ have the same parity (otherwise, an odd hole
exists).  Therefore, there are two kinds of 2-joins, according to this
parity: \emph{odd} and \emph{even} 2-joins.  If $(X_1, X_2)$ is a
2-join of $\overline{G}$, then it is a \emph{complement 2-join} of
$G$.

When $G$ is a graph and $A\subseteq V(G)$, we denote by $C(A)$ the
sets of vertices of $G$ complete to $A$ and by $\overline{C}(A)$ the
set of vertices of $G$ anticomplete to $A$.  A \emph{homogeneous pair}
(first defined in a slightly different way by Chv\'atal and
Sbihi~\cite{chvatal.sbihi:bullfree} who proved that they are useful)
is a pair of disjoint sets $A, B \subseteq V(G)$ such that $|A|, |B|
\geq 2$, every vertex of $A$ has a neighbor and a non-neighbor in $B$,
every vertex of $B$ has a neighbor and a non-neighbor in $A$, and $A$,
$B$, $C(A) \cap \overline{C}(B)$, $\overline{C}(A) \cap C(B)$, $C(A)
\cap C(B)$, $\overline{C}(A) \cap \overline{C}(B)$ are all non-empty
and partition $V(G)$.

All the decompositions presented so far are nice in the following
sense.  When applied recursively,  they yield decomposition trees of
polynomial size that allow solving several problems.  The machinery is
too heavy to be presented here, see Section~\ref{sec:col}.  We now
turn our attention to other kinds of cutset that do not have this nice
property.

A \emph{star cutset} (first defined by
Chv\'atal~\cite{chvatal:starcutset}) in a graph $G$ is a set $S$ of
vertices such that $G \setminus S$ is disconnected and $S$ contains a
vertex $v$, called the \emph{center}, complete to $S \setminus v$.  The
following is known as the \emph{star cutset
  lemma}~\cite{chvatal:starcutset}.  It remarkably generalizes the
usefulness of clique cutsets, 1-joins and homogeneous sets (because
these decompositions all imply the presence of a star cutset, in the
graph or in some degenerate small cases, in the complement).

\begin{lemma}[Chv\'atal 1985]
  \label{th:sc}
  A minimally imperfect graph has no star cutset.
\end{lemma}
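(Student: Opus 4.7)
Suppose for contradiction that $G$ is minimally imperfect and admits a star cutset $S$ with center $v$. The plan is to build a single stable set of $G$ meeting every maximum clique of $G$, which would force $\chi(G) \leq \omega(G)$ by perfection of $G\setminus T$, contradicting the imperfection of $G$. The tool for constructing the stable set is the strong characterization~\ref{i:f2} of Lemma~\ref{l:simpleC}, applied on each side of the cutset using the fact that all proper induced subgraphs of $G$ are perfect.

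Concretely, write $G\setminus S = A \sqcup B$ with no edges between $A$ and $B$, and set $G_A = G[A\cup S]$, $G_B = G[B\cup S]$. Both are proper induced subgraphs of $G$, hence perfect. Applying characterization~\ref{i:f2} of Lemma~\ref{l:simpleC} to $G_A$ and the vertex $v$, we get a stable set $T_A$ of $G_A$ that contains $v$ and meets every maximum clique of $G_A$; define $T_B$ analogously in $G_B$. Set $T = T_A \cup T_B$.

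The first thing to check is that $T$ is stable in $G$. Since $v$ is complete to $S\setminus\{v\}$, the stable set $T_A$ intersects $S$ only in $\{v\}$, so $T_A\setminus\{v\}\subseteq A$; likewise $T_B\setminus\{v\}\subseteq B$. As $A$ is anticomplete to $B$ and $T_A$, $T_B$ are individually stable, $T$ is indeed stable. The second thing to check is that $T$ meets every maximum clique $K$ of $G$. Because $A$ is anticomplete to $B$, we may assume $K\subseteq V(G_A)$; but then $|K| = \omega(G) \geq \omega(G_A) \geq |K|$, so $K$ is a maximum clique of $G_A$ and is met by $T_A\subseteq T$.

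Finally, $T\neq\emptyset$ (it contains $v$) and $\omega(G\setminus T)\leq \omega(G)-1$ by the previous paragraph, so by perfection of the proper induced subgraph $G\setminus T$ we have $\chi(G\setminus T)\leq \omega(G)-1$; giving $T$ a fresh colour yields $\chi(G)\leq \omega(G)$, contradicting the imperfection of $G$. The only delicate step, and the one to state carefully, is the verification that a maximum clique of $G$ contained in $V(G_A)$ is automatically a maximum clique of $G_A$; everything else is bookkeeping on the anticomplete partition $A\sqcup B$ and on the fact that the star centre $v$ is the unique vertex of $S$ that a stable set containing $v$ can use.
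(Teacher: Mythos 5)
Your proposal is correct and follows essentially the same route as the paper: apply the strong characterization~\ref{i:f2} of Lemma~\ref{l:simpleC} with the center $v$ on each side $G[S\cup X]$ and $G[S\cup Y]$ of the cutset, and take the union of the two stable sets. The only (cosmetic) difference is in the wrap-up, where you derive an $\omega(G)$-colouring of $G$ directly instead of invoking condition~\ref{i:f3} of Lemma~\ref{l:simpleC}; both conclusions are fine.
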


\begin{proof}
  Let $G$ be a minimally imperfect graph and suppose for a
  contradiction that $G$ has a star cutset $S$ centered at $v$.  Let
  $(X, Y)$ be a partition of $V(G \setminus S)$ such that $|X|, |Y|
  \geq 1$ and $X$ is anticomplete to $Y$.  We now prove that $G$
  satisfies the condition~\ref{i:f3} from Lemma~\ref{l:simpleC} (this
  implies that $G$ is perfect, giving the contradiction).  Since every
  proper induced subgraph of $G$ is perfect, it just remains to find
  the desired stable set in $G$.  By condition~\ref{i:f2} from
  Lemma~\ref{l:simpleC}, there exists a stable set $A_X$ in $G[S \cup
  X]$ that contains $v$ and intersects all maximum cliques of $G[S
  \cup X]$.  A similar stable set $A_Y$ exists in $G[S \cup Y]$.  Now,
  $A_X \cup A_Y$ is a stable set of $G$ that intersects all maximum
  cliques of $G$.
\end{proof}

It is quite easy to turn the proof above into a colouring algorithm
(that would for instance colour any perfect graph every induced
subgraph of which is either basic or decomposable by a star cutset).
The algorithm would output a stable set that intersects all maximum
cliques, and along the lines of the proof of Lemma~\ref{l:simpleC},
this gives a colouring algorithm.  Unfortunately, this algorithm does
not run in polynomial time, because the star cutset can be very big,
for instance be the entire graph except two vertices.  In this case,
the complexity analysis of the recursive calls leads to an exponential
number of calls.  There is a similar problem with the generalizations
that we consider now.  This is the main reason why the decomposition
of Berge graphs does not lead to a polynomial time colouring
algorithm.

A \emph{skew partition} (first defined by
Chv\'atal~\cite{chvatal:starcutset}) of a graph $G$ is a partition
$(A, B)$ of $V(G)$ such that $G[A]$ is not connected, and $G[B]$ is
not anticonnected.  In this case, we say that $B$ is a \emph{skew
  cutset}.  Following a prophetic insight that some
\emph{self-complementary} decomposition generalizing the star cutset
should play some role, Chv\'atal conjectured that a minimally
imperfect graph has no skew partition, and a less formal statement,
that skew partitions should appear in the decomposition of Berge
graphs.  Observe that if a graph on at least 5 vertices with at least
one edge has a star cutset, then it has a skew partition.  The proof
of Ch\'vatal's conjectures escaped the researchers, but several
fruitful attempts were made.  In particular, many special kinds of
skew partitions were proved not to be in minimal imperfect graphs (see
Reed~\cite{reed:skewhist} for a survey).  In the opposite direction, a
generalization of skew partitions was proved to decompose all Berge
graphs in the following theorem~\cite{conforti.c.v:dstrarcut}.  A
\emph{double star cutset} in a graph $G$ is a set $S \subseteq V(G)$
such that $G\setminus S$ is disconnected and $S$ contains an edge $uv$
such that every vertex of $S$ is adjacent to at least one of $u, v$.
Note that for a Berge graph $G$, the following gives in fact two
pieces of information: one for $G$, one for $\overline{G}$.

\begin{theorem}[Conforti, Cornu\'ejols and Vu\v skovi\'c 2004]
  \label{th:double}
  A graph with no odd hole is either basic, or has a 2-join or a
  double star cutset.
\end{theorem}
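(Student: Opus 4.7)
The plan is to consider an odd-hole-free graph $G$ that is not basic, and show that $G$ must admit a 2-join or a double star cutset. The overall strategy is to locate a structural obstruction witnessing non-basicness, and then convert that obstruction into the desired decomposition, exploiting the key parity consequence of the hypothesis: any two induced paths sharing endpoints and with interiors in a common connected piece of $G$ have the same parity.

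First I would establish an obstruction theorem of the form: if $G$ contains none of a short list of configurations---prisms (two vertex-disjoint triangles joined by three internally disjoint paths), certain proper wheels (a hole $H$ together with a vertex $v \notin H$ whose neighbourhood on $H$ follows a prescribed irregular pattern), and a handful of ``long even hole with chord'' configurations---then $G$ is basic. This is a classification step, proved by induction on $|V(G)|$: one starts with a maximum induced subgraph $H$ of a given basic type, examines how an additional vertex $w$ can attach to $H$, and argues that either $H + w$ remains basic or one of the listed obstructions appears. Here the no-odd-hole parity rule sharply constrains the possible attachments, and after some care one is left with exactly the five basic classes of the theorem.

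Second, for each obstruction I would extract the decomposition. From a prism with sufficiently long paths, the two triangles seed the sets $A_i, B_i$ of a 2-join: grow $X_1, X_2$ as the ``attractors'' of each triangle using the path-parity rule, and verify that no stray edges cross between $X_1$ and $X_2$ and that the size and connectedness conditions of a 2-join are met. From a problematic wheel with centre $u$, a carefully chosen neighbour $v$ of $u$ gives an edge $uv$ such that $N[u]\cup N[v]$ separates the remainder of the hole into two components, which is exactly a double star cutset. The ``long even hole with chord'' configurations are handled analogously: either the chord and one of its endpoints' neighbours furnish the two vertices of a double star, or the chord exposes a 2-join along the hole.

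The main obstacle will be the case analysis for degenerate configurations, where short prisms, small wheels, or special chord patterns produce a candidate 2-join that fails one of the technical requirements ($|X_i|\geq 3$, components of $G[X_i]$ meeting both $A_i$ and $B_i$, and the path 2-join subtlety). In these cases one must instead locate a double star cutset, and the dichotomy has to be threaded carefully through the whole proof so that whenever 2-join extraction stalls on a structural technicality, a double star cutset can be handed back. A secondary difficulty is that the hypothesis is only no-odd-hole and not full Berge, so complement-side reasoning is unavailable; obstructions that morally come from the complement (e.g.\ configurations one would treat as antiholes in the Berge setting) must be analysed directly, which is precisely what forces the cutset to be a \emph{double} star rather than an ordinary star.
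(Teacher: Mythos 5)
There is a genuine gap — in fact the gap is essentially the whole proof. First, be aware that the survey does not prove Theorem~\ref{th:double} at all: it is quoted from the paper of Conforti, Cornu\'ejols and Vu\v skovi\'c, whose proof is a decomposition argument running to many dozens of pages of case analysis. Your proposal reproduces, at the level of a table of contents, the general shape of that kind of argument (find a ``dense'' obstruction such as a prism or a wheel, show that attachments to it either enlarge it or force a decomposition, and show that when all obstructions are excluded the graph is basic), but every step that carries mathematical content is asserted rather than carried out: the classification step (``after some care one is left with exactly the five basic classes''), the extraction of a 2-join from a prism (``grow $X_1, X_2$ as the attractors \dots and verify that no stray edges cross''), and the treatment of wheels. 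Explicitly deferring ``the main obstacle \dots the case analysis for degenerate configurations'' is deferring the theorem itself; nothing in the proposal could be checked or completed without redoing the original work.

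There is also a concrete error in the toolkit you propose to rely on. The ``key parity consequence'' you state --- that any two induced paths sharing endpoints and with interiors in a common connected piece of $G$ have the same parity --- is false for odd-hole-free graphs: the union of two such paths need not induce a cycle, since chords may run between their interiors, so no odd hole is forced. (Already a vertex adjacent to both ends of an edge gives two induced paths of lengths $1$ and $2$ with the same ends.) The genuine parity statements used in this area, e.g.\ that in a 2-join all paths from $A_i$ to $B_i$ with interior in $C_i$ have the same parity, need the second path to be vertex-disjoint from and anticomplete to the interior of the first, and arranging exactly that is where the case analysis lives. Similarly, the claim that an irregularly attached wheel centre $u$ together with a suitable neighbour $v$ makes $N[u]\cup N[v]$ a double star cutset is not automatic: one must actually prove that deleting this set disconnects the graph, and in the known proofs the double star cutsets emerge from a careful study of major vertices and of connected attachments, not from the wheel alone. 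So even as a plan, the proposal rests on a parity principle that does not hold and on separation claims that are unproved.
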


One of the breakthroughs made in the proof of the SPGT is the concept
of \emph{balanced skew partition}.  For a graph $G$, a partition (skew
or not) $(A, B)$ of $V(G)$ is \emph{balanced} if every path of
length at least~3, with ends in $B$ and interior in $A$, and every
antipath of length at least~3, with ends in $A$ and interior in $B$
has even length.  It is straightforward to check that a partition $(A,
B)$ of a Berge graph is balanced if and only if adding a vertex
complete to $B$ and anticomplete to $A$ yields a Berge graph.

As we will see, the notion of balanced skew partition is sufficiently
particular to allow a short proof that a minimum counter-examples to
the SPGT cannot contain it, and sufficiently general to be found in
all non-basic Berge graphs that cannot be decomposed otherwise.
Interestingly, Zambelli~\cite{zambelli:these} notices that if a Berge
graph on at least five vertices and with at least one edge has a star
cutstet, then it has a balanced skew partition.

\begin{lemma}
  \label{l:ext}
  If $(A, B)$ is a balanced partition of a perfect graph $G$, and if
  every Berge graph of size at most $|V(G)|+1$ is perfect, then $G[B]$
  admits a colouring that can be extended to a colouring of $G$.
\end{lemma}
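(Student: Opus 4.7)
The plan is to combine the balanced hypothesis with the replication lemma (Lemma~\ref{l:rep}): add one vertex to $G$ and then blow it up into a clique of the right size, all complete to $B$. Set $\omega=\omega(G)$, $\omega_B=\omega(G[B])$, and $k=\omega-\omega_B\ge 0$. If $k=0$, any optimal colouring of $G$ uses $\omega$ colours, and when restricted to $B$ is a proper colouring using at least $\chi(G[B])=\omega_B=\omega$ and at most $\omega$ colours, hence exactly $\omega_B$ colours, so it is already optimal on $G[B]$. From now on assume $k\ge 1$.

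First I would construct $G'$ by adding to $G$ a new vertex $v$ complete to $B$ and anticomplete to $A$. Since $(A,B)$ is balanced, $G'$ is Berge, and because $|V(G')|=|V(G)|+1$, the hypothesis of the lemma gives that $G'$ is perfect. I would then replicate $v$ exactly $k-1$ times inside $G'$; each replication preserves perfection by Lemma~\ref{l:rep}, so the resulting graph $G^\ast$ is perfect, and the $k$ copies of $v$ form a clique $K$ of size $k$, complete to $B$ and anticomplete to $A$. A short computation gives $\omega(G^\ast)=\omega$: any clique of $G^\ast$ meeting $K$ is contained in $K\cup B$ (the common neighbourhood of any $K$-vertex together with $K$ itself) and so has size at most $|K|+\omega_B=\omega$, while any clique disjoint from $K$ is a clique of $G$.

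To finish, fix an optimal colouring $c$ of $G^\ast$ with $\omega$ colours. The $k$ vertices of $K$ receive $k$ pairwise distinct colours, and since $K$ is complete to $B$ none of those colours appears on $B$; hence the restriction of $c$ to $B$ uses at most $\omega-k=\omega_B$ colours, and being a proper colouring of $G[B]$, at least $\omega_B$. So the restriction of $c$ to $B$ is an optimal colouring of $G[B]$, while the restriction of $c$ to $G=G^\ast\setminus K$ is a proper $\omega$-colouring of the perfect graph $G$ and is therefore an optimal colouring of $G$ extending it. I do not expect any real obstacle; the one point to notice is that replicating $v$ exactly $k-1$ times is what synchronises optimality for $G$ (which needs $\omega$ colours) and for $G[B]$ (which needs exactly $\omega_B$): for $k\ge 2$ a single added vertex $v$ does not suffice, because the restriction to $B$ of an optimal colouring of $G'$ could still use up to $\omega-1>\omega_B$ colours.
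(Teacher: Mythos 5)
Your proof is correct and follows essentially the same route as the paper: add a clique of size $k=\omega(G)-\omega(G[B])$ complete to $B$ and anticomplete to $A$ (built as one vertex, perfect by the hypothesis since the balanced partition keeps it Berge, then blown up via Lemma~\ref{l:rep}), note $\omega$ is unchanged, and restrict an optimal colouring. You merely spell out the details the paper leaves terse (the $k\le 1$ cases, $\omega(G^\ast)=\omega$, and why the restriction to $B$ is optimal), so there is nothing to correct.
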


\begin{proof}
  Consider the graph $G'$ obtained by adding a clique of size $k =
  \omega(G) - \omega(G[B])$ complete to $B$ and anticomplete to $A$.
  It is Berge because $(A, B)$ is balanced.  So it is perfect when $k
  \leq 1$ from our assumption, and it is also perfect and when $k\geq
  2$ by several applications of Lemma~\ref{l:rep}.  Observe that
  $\omega(G') = \omega(G)$.  An $\omega(G)$ colouring of $G'$ yields a
  colouring of $G[B]$ that extends to a colouring of $G$.
\end{proof}

\begin{theorem}[Chudnovsky, Robertson, Seymour and Thomas 2002]
  A minimum imperfect Berge graph admits no balanced skew partition.  
\end{theorem}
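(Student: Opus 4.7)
My plan is to derive a contradiction by constructing a proper $\omega(G)$-colouring of $G$. Let $G$ be a minimum imperfect Berge graph with balanced skew partition $(A,B)$, and split $A=A_1\sqcup A_2$ (with $A_1$ anticomplete to $A_2$, both nonempty) and $B=B_1\sqcup B_2$ (with $B_1$ complete to $B_2$, both nonempty). For $i\in\{1,2\}$ set $G_i=G[A_i\cup B]$; each $G_i$ is a proper induced subgraph of $G$, hence perfect by minimality.

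I would first check that $(A_i,B)$ is a balanced partition of $G_i$. A path in $G_i$ with ends in $B$ and interior in $A_i$ is the same path in $G$ (since $A_1$ is anticomplete to $A_2$, no shortcut through $A_{3-i}$ is possible), and an antipath in $G_i$ with ends in $A_i$ and interior in $B$ must have its interior entirely in $B_1$ or entirely in $B_2$ (because $B_1$ is complete to $B_2$, so no antiedge runs between them), so it is also an antipath in $G$. The parity conditions defining balance therefore transfer from $(A,B)$ to $(A_i,B)$. Next I would apply Lemma~\ref{l:ext} to $G_i$ with partition $(A_i,B)$: its hypothesis asks that every Berge graph on at most $|V(G_i)|+1$ vertices be perfect, which by minimality of $G$ holds whenever $|A_{3-i}|\geq 2$. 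Grouping components of $G[A]$ typically yields $|A_1|,|A_2|\geq 2$; the degenerate cases are treated by the symmetric argument in $\overline G$ via the anti-decomposition of $B$, or by a direct stable-set argument through condition~\ref{i:f2} of Lemma~\ref{l:simpleC}. In this way, for each $i$, one obtains an optimal colouring $c_i$ of $G[B]$ that extends to an $\omega(G_i)$-colouring $\tilde c_i$ of $G_i$.

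The main obstacle is to glue $\tilde c_1$ and $\tilde c_2$ into a single $\omega(G)$-colouring of $G$; this requires $c_1$ and $c_2$ to coincide on $B$ after colour relabeling. The leverage comes from $B_1$ being complete to $B_2$: every colouring of $G[B]$ decomposes as an optimal colouring of $G[B_1]$ together with an optimal colouring of $G[B_2]$ drawn from a disjoint palette, and $\omega(G[B])=\omega(G[B_1])+\omega(G[B_2])$. After relabeling palettes one may assume $c_1$ and $c_2$ use the same colours on each $B_j$, and the remaining task is to align the induced partitions of $B_1$ and $B_2$ into colour classes. My plan is to refine the application of Lemma~\ref{l:ext} by invoking condition~\ref{i:f2} of Lemma~\ref{l:simpleC} to prescribe, for each $j\in\{1,2\}$, a fixed vertex $v_j\in B_j$ that lies in a designated stable colour class of each extension; this pins down a common optimal colouring of each $G[B_j]$ inside both $\tilde c_1$ and $\tilde c_2$. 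Once $c_1$ and $c_2$ agree on $B$, the extensions combine (using $A_1$ anticomplete to $A_2$) into a proper colouring of $G$ with $\max(\omega(G_1),\omega(G_2))=\omega(G)$ colours, contradicting $\chi(G)>\omega(G)$.
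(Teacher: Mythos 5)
Your setup (applying Lemma~\ref{l:ext} to $G[B\cup A_1]$ and $G[B\cup A_2]$, with $|A_1|,|A_2|\geq 2$ needed for the minimality hypothesis) matches the paper's, but the heart of your argument --- forcing the two colourings $c_1,c_2$ of $G[B]$ to coincide --- does not work, and the paper's proof is designed precisely to avoid needing this. Pinning one vertex $v_j\in B_j$ into a designated stable colour class via condition~\ref{i:f2} of Lemma~\ref{l:simpleC} does not determine an optimal colouring of $G[B_j]$: optimal colourings are far from unique, two of them can contain the same prescribed vertex in a designated class and still partition the rest of $B_j$ differently, and Lemma~\ref{l:ext} gives you no control over \emph{which} colouring of $G[B]$ it outputs, only that some colouring extends. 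Worse, producing a single colouring of $G[B]$ that extends simultaneously to both $G[B\cup A_1]$ and $G[B\cup A_2]$ is essentially equivalent to what you are trying to prove (it immediately yields $\chi(G)=\omega(G)$), so there is no cheap patch along these lines. The paper's trick is to use the colouring $C_i$ only to split $B\cup A_i$ into $X_i$, the vertices whose colour appears on $X:=B_1$, and $Y_i=(B\cup A_i)\setminus X_i$: since in an optimal colouring of $G[B]$ the colours on $X$ and on $Y:=B_2$ are disjoint, one gets $X_i\cap B=X$ and $Y_i\cap B=Y$, so $X_1\cup X_2$ and $Y_1\cup Y_2$ partition $V(G)$, with $\omega(G[X_1\cup X_2])=\omega(G[X])$ and $\omega(G[Y_1\cup Y_2])=\omega(G)-\omega(G[X])$ (a clique avoids one of $A_1,A_2$). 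These are proper induced subgraphs, so the minimality of $G$ is invoked a \emph{second} time --- something your proposal never does --- to recolour each of them optimally, and the two disjoint palettes combine into an $\omega(G)$-colouring of $G$; no agreement between $C_1$ and $C_2$ is ever required.

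A secondary gap is your treatment of the degenerate case $|A_i|=1$. ``Grouping components'' cannot help when $G[A]$ has exactly two components one of which is a singleton, and passing to $\overline{G}$ may face the same degeneracy there (some $|B_j|=1$), so neither of your suggested fallbacks is an argument as stated. The paper settles this cleanly: by Theorem~\ref{th:pgt}, $\overline{G}$ is also a minimum imperfect Berge graph, and if $A_1=\{v\}$ then $A$ is a star cutset of $\overline{G}$ centered at $v$ (in $\overline{G}$, $v$ is complete to $A_2$ and removing $A$ separates $B_1$ from $B_2$), contradicting Lemma~\ref{th:sc}. This part is fixable, but as written it is another hole.
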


\begin{proof}
  Let $G$ be minimum imperfect Berge graph.  Hence, $\chi(G) >
  \omega(G)$.  Note that by Theorem~\ref{th:pgt}, $\overline{G}$ is
  also a minimum imperfect Berge graph.  Let $(A, B)$ be a balanced
  skew partition in $G$.  So, $A$ partitions into two sets $A_1$ and
  $A_2$ anticomplete to one another, and $B$ partitions into two sets
  $X$ and $Y$ complete to one another.  By Lemma~\ref{th:sc},
  $|A_1|, |A_2| \geq 2$, for otherwise, the unique vertex in $A_1$ or
  $A_2$ would be the center of a star cutset in $\overline{G}$.  From
  the minimality of $G$, it follows that every Berge graph of size
  $|V(G[B \cup A_i])| + 1$ is perfect, $i=1, 2$.  By
  Lemma~\ref{l:ext}, consider an $\omega(G[B])$ colouring $C_i$ of
  $G[B]$ that extends to a colouring of $G[B \cup A_i]$.  Let $X_i$ be
  the set of vertices of $G[B \cup A_i]$ whose colour in the colouring
  $C_i$ is present in $X$ and let $Y_i = (B \cup A_i) \setminus X_i$.
  Because of the colouring $C_i$, $\omega(G[X_i]) = \omega(G[X])$.  So
  $\omega(G[X_1 \cup X_2]) = \omega(G[X])$.  By the minimality of $G$,
  it follows that $G[X_1 \cup X_2]$ has an $\omega(G[X])$-colouring.
  Because of the colouring $C_i$, $\omega(G[Y_i]) = \omega(G) -
  \omega(G[X_i]) = \omega(G) - \omega(G[X])$.  So, $\omega(G[Y_1 \cup
  Y_2] = \omega(G) - \omega(G[X])$.  By the minimality of $G$, it
  follows that $G[Y_1 \cup Y_2]$ has an $(\omega(G) -
  \omega(G[X]))$-colouring.  It follows that $G$ has an
  $\omega(G)$-colouring, a contradiction.
\end{proof}

The following is maybe hopeless, because a proof would imply a direct
argument for the skew partition conjecture (no such argument exists
today).  Also a proof of the following together with
Theorem~\ref{th:double} would yield a new proof the SPGT.  Observe
that antiholes of length at least~6 have double star cutsets.  

\begin{question} 
  Find a direct proof of the following: if $G$ is a minimum Berge
  imperfect graph, then at least one of $G, \overline{G}$ admits no
  double star cutset.
\end{question}

\subsection*{Further reading}

Rusu~\cite{rusu:cutsets} wrote a survey about cutsets in perfect
graphs, see also~\cite{RousselRT09}.  Reed~\cite{reed:skewhist} wrote
a survey about skew partitions (on which this section is mostly
based).  It shows that many ideas of the proof presented above for
balanced skew partitions are
implicitly contained in several papers, namely  in 
Ho\`ang~\cite{hoang:minimp}, Olariu~\cite{Olariu90} and Roussel and
Rubio~\cite{roussel.rubio:01}.

An important question about decompositions is their detection in
polynomial time.  The following decompositions can all be detected in
polynomial time: clique cutset (in time $O(nm)$,
Tarjan~\cite{tarjan:clique}), homogeneous set (in time $O(n+m)$, see
Habib and Paul~\cite{HabibP10}), 1-join (in time $O(n+m)$, see Charbit,
de Montgolfier and Raffinot~\cite{ChMoRa:split}), 2-join (in time
$O(n^2m)$, Charbit, Habib, Trotignon and Vu\v
skovi\'c~\cite{ChHaTrVu:2-join}), homogeneous pair (in time
$O(n^2m)$, Habib, Mamcarz and de
Montgolfier~\cite{habibMamMon:Hjoin}), skew partitions (in time
$O(n^4m)$, Kennedy and Reed~\cite{kennedyreed:skew}).
Trotignon~\cite{nicolas:bsp} showed that balanced skew partition are
NP-hard to detect, but devised an $O(n^9)$-time non-constructive
algorithm that certifies whether an input \emph{Berge} graph has or
not a balanced skew partition.

\section{Truemper configurations}
\label{sec:truemper}

Before going further, we need to define several special kinds of
graphs, known as \emph{Truemper configurations}.  They appear in many
contexts (sometimes in older papers, such
as~\cite{watkinsMesner:cycle}).

A \emph{prism} is a graph made of three vertex-disjoint paths $P_1 =
a_1 \dots b_1$, $P_2 = a_2 \dots b_2$, $P_3 = a_3 \dots b_3$ of length
at least 1, such that $a_1a_2a_3$ and $b_1b_2b_3$ are triangles and no
edges exist between the paths except these of the two triangles.
Observe that a prism in a Berge graph must have the lengths of the
three paths of the same parity.  The prism is \emph{odd} or
\emph{even} according to this parity.
 
A \emph{pyramid} is a graph made of three paths $P_1 = a \dots b_1$,
$P_2 = a \dots b_2$, $P_3 = a \dots b_3$ of length at least~1, two of
which have length at least 2, vertex-disjoint except at $a$, and such
that $b_1b_2b_3$ is a triangle and no edges exist between the paths
except these of the triangle and the three edges incident to $a$.

A \emph{theta} is a graph made of three internally vertex-disjoint
paths $P_1 = a \dots b$, $P_2 = a \dots b$, $P_3 = a \dots b$ of
length at least~2 and such that no edges exist between the paths
except the three edges incident to $a$ and the three edges incident to
$b$.

\begin{figure}
   \begin{center}
     \includegraphics[height=2cm]{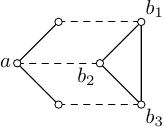}
     \hspace{1em}
     \includegraphics[height=2cm]{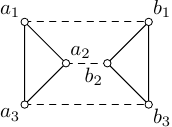}
     \hspace{1em}
     \includegraphics[height=1.8cm]{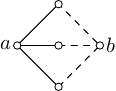}
     \hspace{1em}
     \includegraphics[height=2cm]{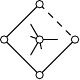}
   \end{center}
   \caption{Pyramid, prism, theta and wheel (dashed lines represent paths)\label{f:tc}}
 \end{figure}

Observe that the lengths of the paths in the three definitions are
designed so that the union of any two of the paths form a hole. 

A \emph{wheel} is a graph formed by a hole $H$ together with a vertex
that have at least three neighbors in the hole.   

A \emph{Truemper configuration} is a graph isomorphic to a prism, a
pyramid, a theta or a wheel.  As we will see, Truemper configurations
play a special role in the proof of the SPGT.  First, a Berge graph
has no pyramid (because among the three paths of a pyramid, two have
the same parity, and their union forms an odd hole).  This little fact
is used very often to provide a contradiction when working with Berge
graphs.  As we will soon see, a long part of the proof is devoted to
study the structure of a Berge graph that contains a prism, and another long
part is devoted to a Berge graph that contains a wheel.  And at the very
end of the proof, it is proved that graphs not previously decomposed
are bipartite, just as Berge thetas are.  Note also that prisms can be
defined as line graphs of thetas.  This use of Truemper configurations
is seemingly something deep and general as suggested by Vu\v skovi\'c
in a very complete survey~\cite{vuskovic:truemper} about Truemper
configurations and how they are used (sometimes implicitly) in many
decomposition theorems.

So far, no systematic study of the exclusion of Truemper
configurations has been made. The most interesting question is perhaps
the following.

\begin{question}
  \label{q:noPyWh}
  Are all wheel-free graphs $\chi$-bounded by the same function?
\end{question}

\subsection*{Further reading}

About Truemper configurations, the survey of Vu\v
skovi\'c~\cite{vuskovic:truemper} is the best reading.  About
excluding wheels, see Aboulker, Radovanovi\'c, Trotignon and Vu\v
skovi\'c, \cite{aboulkerRTV:propeller}.  To see how Truemper
configurations appear naturally in the definition of several classes
that generalize chordal graphs, see Aboulker, Charbit, Trotignon and
Vu\v skovi\'c \cite{abChTrVu:moplex}.

Testing whether a graph contains or not some type of Truemper
configuration is a question of interest.  Detecting a theta in some
input graph can be done in time $O(n^{11})$ (see Chudnovsky and
Seymour~\cite{chudnovsky.seymour:theta}) and a pyramid in time
$O(n^9)$ (Chudnovsky, Cornu\'ejols, Liu, Seymour and Vu\v
skovi\'c~\cite{chudnovsky.c.l.s.v:reco}).  Detecting a prism is
NP-complete (Maffray and Trotignon~\cite{maffray.t:reco}).  Detecting
a wheel is NP-complete, even when restricted to bipartite (and
therefore perfect) graphs (Diot, Tavenas and
Trotignon~\cite{diotTaTr:13}).  Detecting a prism or a pyramid can be
done in time $O(n^5)$ (Maffray and Trotignon~\cite{maffray.t:reco});
so detecting a prism in a Berge graph is polynomial, since Berge
graphs contain no pyramids.  Detecting a theta or a pyramid can be
done in time $O(n^7)$ (Maffray, Trotignon and Vu\v
skovi\'c~\cite{maffray.t.v:3pcsquare}).  Detecting a prism or a theta
can be done in time $O(n^{35})$ (Chudnovsky and
Kapadia~\cite{Chudnovsky.Ka:08}).  For similar questions, see
L\'ev\^eque, Lin, Maffray and Trotignon~\cite{leveque.lmt:detect}.

\section{The strategy of the proof}

The main result in~\cite{chudnovsky.r.s.t:spgt} is
Theorem~\ref{th:dec} below, and as we know from the previous sections,
it implies the SPGT.  Its statement is the result of a long sequence
of attempts by many researchers, as explained in the introduction
of~\cite{chudnovsky.r.s.t:spgt} or in~\cite{RousselRT09}.  A slight
variant on what seems now to be the right statement was first
conjectured by Conforti, Cornu\'ejols and Vu\v
skovi\'c~\cite{conforti.c.v:square}.  They proved it in the
square-free case, and some of the arguments that they discovered are
essential in the strategy described below (in particular, the
attachments to prisms, and the use of Truemper configurations).

\begin{theorem}[Chudnovsky, Robertson, Seymour and Thomas 2002]
  \label{th:dec}
  Every Berge graph is basic, or has a 2-join, a complement 2-join, a
  homogeneous pair or a balanced skew partition.
\end{theorem}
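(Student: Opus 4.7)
The plan is to take a minimum Berge graph $G$ that is neither basic nor admits any of the four listed decompositions, and derive a contradiction. The case analysis is driven by the Truemper content of $G$ (Section~\ref{sec:truemper}): since $G$ is Berge, it contains no pyramid (two of the three paths of a pyramid have the same parity, and their union is an odd hole), so the only Truemper configurations available in $G$ are prisms, thetas and wheels. I would begin with preliminary reductions. First, I may assume $G$ has no homogeneous pair (else we are done). Second, Lemma~\ref{th:sc} lets me exclude trivial star cutsets, and homogeneous sets and $1$-joins reduce to balanced skew partitions as noted in Section~\ref{s:dec}. Thetas can be absorbed early, using that a theta in a Berge graph has all three paths of even length, so either it forces a $2$-join or its presence is compatible with $G$ being basic.

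The heart of the argument is the \emph{prism case}. Assume $G$ contains a prism $\Pi$ with paths $P_1, P_2, P_3$ of the common parity forced by the Berge condition, chosen maximal or ``long'' in an appropriate sense. I would classify how every $v \in V(G)\setminus V(\Pi)$ attaches to $V(\Pi)$, refining the square-free analysis of Conforti, Cornu\'ejols and Vu\v skovi\'c~\cite{conforti.c.v:square} into a short list of legal attachment types, partitioning the vertices of $G\setminus V(\Pi)$ into ``major'' and ``minor'' classes. Repeated use of the Berge hypothesis, to kill odd holes formed by $v$ together with subpaths of $\Pi$, trims this list drastically. Then one shows that either the partition of $V(G)$ induced by the two triangles of $\Pi$ witnesses a $2$-join or complement $2$-join across the three path-pairs of $\Pi$, or the set of major attachments together with a triangle of $\Pi$ forms a balanced skew cutset, balance being forced by parity constraints from ``Berge''. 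This is where the main obstacle lies: the case analysis is long and intricate because a single vertex can interact with $\Pi$ in many subtle ways, and producing a \emph{balanced} skew partition (as opposed to a generic skew partition) requires delicate parity bookkeeping at every step.

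In the \emph{wheel case}, assume $G$ has no long prism but contains a proper wheel $(H, v)$. Choosing such a wheel optimally (smallest $H$, fewest sectors), I would use Roussel--Rubio-type results about anticonnected sets complete to a path, together with the tools of Ho\`ang, Olariu, Roussel and Rubio cited at the end of Section~\ref{s:dec}, to show that the closed neighborhood of $v$, enlarged by suitable common neighbors across the sectors of $H$, forms a balanced skew cutset in $G$ or in $\overline{G}$. Finally, in the \emph{residual case}, $G$ contains no long prism, no proper wheel, no pyramid, and survived the preliminary reductions. A parity argument in an auxiliary graph then forces $G$ into one of the five basic classes of Section~\ref{s:basic}: bipartite, complement of bipartite, line graph of bipartite (small triangular prisms are absorbed here rather than decomposed), complement thereof, or a doubled graph, in which case I would produce a good partition $(X,Y)$ explicitly. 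Throughout, the subtlest bookkeeping concerns the precise technical constraints on $2$-joins (exclusion of path $2$-joins where required, the minimum sizes $|X_i|\ge 3$ and the component condition) and the balance condition on skew partitions, both of which must be re-verified at every reduction.
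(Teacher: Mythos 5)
Your outline reproduces, at the coarsest level, the strategy that this survey itself describes for Theorem~\ref{th:dec} (exclude pyramids, study how vertices and anticonnected sets attach to a prism to produce 2-joins or skew cutsets, then handle wheels, then argue that what is left is bipartite-like), so the plan is not wrong in spirit. But it is not a proof: every point at which the theorem is actually hard is a placeholder. ``Repeated use of the Berge hypothesis trims this list drastically'', ``one shows that either \dots\ or the major attachments form a balanced skew cutset'', ``a parity argument in an auxiliary graph then forces $G$ into one of the five basic classes'' are exactly the statements whose proofs occupy on the order of 150 pages in~\cite{chudnovsky.r.s.t:spgt}; none of them is routine, and none is argued here. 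The residual case alone (Berge graphs in which $G$ and $\overline{G}$ contain no odd wheel, long prism or double diamond) took 55 pages in the original proof and was only later reduced to 8 pages via the separate even-pair theorem of~\cite{chudnovsky.seymour:even}; your claim that it follows from ``a parity argument'' badly underestimates it. Note also that the survey itself does not prove Theorem~\ref{th:dec}; what you must be measured against is the full proof, and nothing in your text could be completed into it without re-doing essentially all of the work.

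There are also concrete inaccuracies. A theta in a Berge graph need not have all three paths even: the three paths only need the same parity (an all-odd theta is bipartite and perfectly Berge), so the premise of your ``absorb thetas early'' step is false; in fact thetas play no decompositional role in the proof --- the dense structures that must be processed are line graphs of bipartite subdivisions of $K_4$, even and long prisms, the double diamond, various wheels, and antiholes of length at least~6, and your sketch omits the antihole and double-diamond stages entirely, as well as the special treatment forced by the self-complementary graphs $L(K_{3,3})$ and $L(K_{3,3}\setminus e)$, which are basic in several ways at once. The minimal-counterexample framing you open with is never used afterwards (minimality of $G$ plays no role in any step you describe), and the actual proof deliberately avoids induction of this kind, proving attachment lemmas as unconditional statements. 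Finally, producing a \emph{balanced} skew partition is indeed the delicate point, but ``parity bookkeeping forced by Berge'' is not an argument: in the real proof balancedness is extracted through Roussel--Rubio-type lemmas (Lemma~\ref{l:w} and Corollary~\ref{c:rr2}) applied to anticonnected sets of major vertices, and even with those tools in hand the verification is the bulk of the case analysis.
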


The strategy used by Chudnovsky, Robertson, Seymour and Thomas to
prove Theorem~\ref{th:dec} is classical in structural graph theory.
It consists in identifying a `dense' basic class, and a `sparse'
basic class as we explain now.  The `dense' basic class does not
contain the obstruction (here an odd hole or antihole) of course, but
`almost' contains it, so that if a graph $G$ contains an induced
dense subgraph $H$, then any vertex exterior to $H$ must attach in a
very specific way to $H$, either enlarging the basic graph to a bigger
basic graph, or entailing a decomposition.  Therefore, for the sake
of proving the decomposition theorem, it can be assumed that this
particular class of basic graphs is excluded.  Then the process can be
iterated with a new kind of dense basic graphs.  The sparse class is what
remains when all dense substructures are excluded.  Observe that this
method is in some sense wiser than proofs by induction.  Finding the
right induction hypothesis is time consuming because for every
failure, one has to restart from scratch, while a lemma stating that
some dense substructure entails a decomposition is just a true
statement, that can be used forever, even if the strategy of the
proof changes.

For proving Theorem~\ref{th:dec} the sparse class is formed by
bipartite graphs and their complements.  The dense class is more
complicated.  At the beginning of the proof, it is formed by
`sufficiently' connected line graphs, their complements and doubled
graphs.  The simplest line graphs in this context are the odd and even
prisms (that are the line graphs of bipartite thetas).  To understand
why prisms and their generalizations are `dense', the reader can
check as an exercice that a Berge graph formed of a prism and one
vertex not in the prism is either a bigger line graph, or has a
2-join, or some kind of skew partition, namely a star cutset. For this
purpose, it is very convenient to know that Berge graphs have no
pyramids, and what makes this work is that prisms are `close to'
containing pyramids.  Typical cases that should pop out from a proof
attempt are represented in Fig.~\ref{f:prism+v}.  Then the reader
might try to prove a similar statement for the line graph of the
2-subdivision of $K_4$, or to prove similar statements with the vertex
outside of the prism replaced by some path with neighbors in exactly
two paths of the structure for instance.  All this should lead to
variants on 10.1 from~\cite{chudnovsky.r.s.t:spgt}, whose proof is
easy to read since it does not rely on any technical lemmas.  More
about attachments to prisms is explained in Section~\ref{sec:bfp}
below.

\begin{figure}
  \center
  \begin{tabular}{ccc}
    \includegraphics{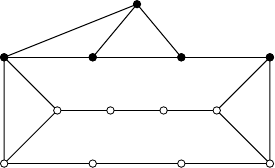}&\rule{4em}{0ex}&
    \includegraphics{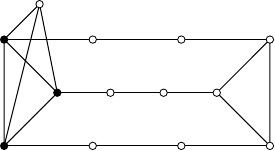}\\
    2-join&&skew partition\\
    \rule{0em}{4ex}&&\\
    \includegraphics{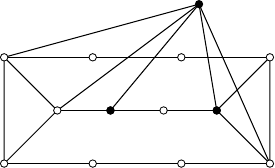}&&
    \includegraphics{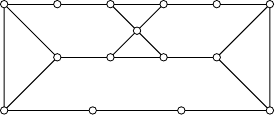}\\
    skew partition&&line graph\\
    \rule{0em}{4ex}&&\\
    \includegraphics{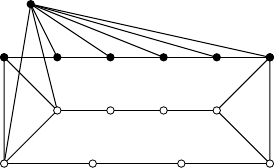}&&
    \includegraphics{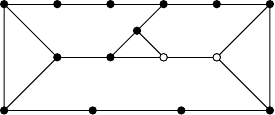}\\
    2-join&&pyramid\\
    \rule{0em}{4ex}&&\\
    \includegraphics{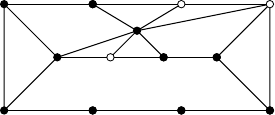}&&
    \includegraphics{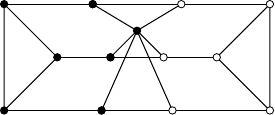}\\
    pyramid&&pyramid\\
    \rule{0em}{3ex}&&\\
  \end{tabular}
  \caption{Various ways to attach a vertex to a prism \label{f:prism+v}} 
\end{figure}

A sequence of about a dozen dense basic graphs is considered:
first, several kinds of line graphs of bipartite subdivisions of
$K_4$, then even prisms, then long prisms (\emph{long} means that at least
one of the paths has length at least~2), then the \emph{double
  diamond} (see Fig~\ref{fig:spo}), then various kinds of wheels (that
are not basic, but that contain skew partitions), and finally
antiholes of length at least 6.  For each of these dense basic
classes, it is proved that containing it leads to being basic or having
some decomposition, and therefore, the Berge graphs handled next may
be assumed not to contain this kind of induced subgraph.  At the end of
this process, so many induced subgraphs are excluded that the graph
under consideration, or its complement, is bipartite.  Needless to
say, identifying this long sequence of `dense' graphs is \emph{tour
  de force}, especially since for each of them, the technicalities are
really involved.  In particular, the self-complementary graphs
$L(K_{3, 3})$ and $L(K_{3,3}\setminus e)$ (see Fig.~\ref{fig:spo}) are
a problem since they are basic in many ways (they are both line graph
and complement of line graphs, and the later is also a doubled graph).
Therefore, there are several ways to describe their structure,
depending on the basic class, and the relevant one is known only from
the rest of the graph.

\begin{figure}
\center
\includegraphics[height=3.5cm]{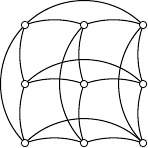}\rule{2em}{0ex}
\includegraphics[height=3cm]{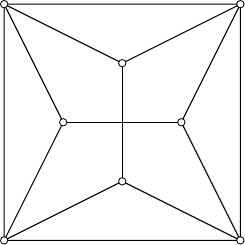}\rule{2em}{0ex}
\includegraphics[height=3cm]{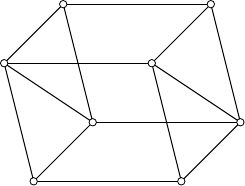}
\caption{$L(K_{3, 3})$, $L(K_{3, 3} \setminus e)$ and the double diamond\label{fig:spo}}
\end{figure}

Despite all the deep technicalities, the objects considered in the
proof of Theorem~\ref{th:dec} are very combinatorial.  This leads to
the following question. 

\begin{question}
  \label{q:polyRead}
  Can the proof of Theorem~\ref{th:dec} be transformed into a polynomial
  time algorithm whose input is any graph $G$ and whose output is either an
  odd hole, an odd antihole, or a partition of the vertices of $G$
  certifying one of the outcomes of Theorem~\ref{th:dec}?
\end{question}

\subsection*{Further reading}

In~\cite{chudnovsky.r.s.t:spgt}, the global strategy of the proof is
well explained at the beginning.  More about the strategy is to be
found in \cite{seymour:how} and \cite{chudvovsky.r.s.t:progress}.  How
structural methods can be used generally for classes closed under
taking induced subgraphs is discussed in Chudnovsky and
Seymour~\cite{chudnovsky.seymour:excluding} and in Vu\v
skovi\'c~\cite{vuskovic:truemper}.  To make a start on
Question~\ref{q:polyRead}, the first step is the detection in
polynomial time of the structures that are used in the proof (line
graphs of a bipartite subdivision of $K_4$, even prism and odd prism).
Apart from wheels (that are NP-complete to detect as mentioned at the
end of Section~\ref{sec:truemper}), they all can be detected in
polynomial time in Berge graph, see Maffray and
Trotignon~\cite{maffray.t:reco}.

\section{The Roussel--Rubio lemma}
\label{sec:RR}

We are now ready to invistigate some technicalities of the proof of
the SPGT.  A lemma due to Roussel and Rubio~\cite{roussel.rubio:01} is
used at many steps in~\cite{chudnovsky.r.s.t:spgt}.  In fact, the
authors of~\cite{chudnovsky.r.s.t:spgt} rediscovered it (in joint work
with Thomassen) and initially named it the \emph{wonderful lemma}
because of its many applications.

The Roussel--Rubio lemma states that, in a sense, any anticonnected
set of vertices of a Berge graph behaves like a single vertex.  How
does a vertex $v$ `behave' in a Berge graph?  If a path of odd
length (at least~3) has both ends adjacent to $v$, then $v$ must have
other neighbors in the path, for otherwise there is an odd hole.  The
lemma states roughly that an anticonnected set $T$ of vertices behaves
similarly: if a path of odd length (at least~3) has both ends complete
to $T$, then at least one internal vertex of the path is also complete
to $T$.  In fact, there are two situations where this statement fails
(outcomes~\ref{oleap} and~\ref{ohop} below),
and for the sake of induction, it is convenient to give similar
properties for a path of even length.  All this results in a more
complicated statement.  A \emph{$T$-edge} is an edge whose ends are
$T$-complete.  When $P = xx'\dots y'y$ is a path of length at least 3,
a \emph{leap} for $P$ is a pair of non-adjacent vertices $u,v$ such that
$N(u) \cap V(P) = \{x, x', y\}$ and $N(v) \cap V(P) = \{x, y', y\}$.
Observe that if $u, v$ is a leap for $P$, then $V(P) \cup \{u, v\}$
induces a prism.  We denote by $P^*$ the interior of a path $P$.

\begin{lemma}[Roussel and Rubio 2001]\label{l:w}
  Let $T$ be an anticonnected set of vertices in a Berge graph $G$.
  If $P$ is a path, vertex-disjoint from $T$ and whose ends are
  $T$-complete, then one the following holds.
  \begin{enumerate}
  \item \label{oeven} $P$ has even length and has an even number of
    $T$-edges;
  \item \label{oodd} $P$ has odd length and has an odd number of
    $T$-edges;
  \item \label{oleap} $P$ has odd length at least $3$ and there is a
    leap for $P$ in $T$;
  \item \label{ohop} $P$ has length $3$ and its two internal vertices
    are the endvertices of an antipath of odd length whose interior is in $T$.
\end{enumerate}
\end{lemma}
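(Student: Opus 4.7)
The plan is to induct on $|T|$. For the base case $|T| = 1$, write $T = \{t\}$ and $P = x_0 x_1 \cdots x_k$. I will establish outcome (i) or (ii), i.e., that the length of $P$ and the number of $T$-edges share the same parity. Let $0 = i_0 < i_1 < \cdots < i_s = k$ be the indices of the $P$-vertices adjacent to $t$. For each consecutive pair, either $i_{r+1} - i_r = 1$, contributing one $T$-edge, or $i_{r+1} - i_r \geq 2$, in which case $t$ together with the subpath $x_{i_r} \cdots x_{i_{r+1}}$ is an induced cycle of length $i_{r+1} - i_r + 2$, which is even by the Berge hypothesis, and so $i_{r+1} - i_r$ is even. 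Summing over $r$, the length $k = \sum_r (i_{r+1} - i_r)$ is congruent modulo $2$ to the number of unit gaps, which is exactly the number of $T$-edges.

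For the inductive step $|T| \geq 2$, choose $t \in T$ so that $T' := T \setminus \{t\}$ is anticonnected; this is possible since $\overline{G}[T]$ is a connected graph on at least two vertices, hence has a non-cutvertex. The ends of $P$ are $T'$-complete, so the induction hypothesis applied to $(P, T')$ yields one of outcomes (i), (ii), (iii), (iv) with $T$ replaced by $T'$. The task is then to upgrade this reduced outcome to one of (i)--(iv) for $T$, by case analysis on the reduced outcome and the adjacency pattern of $t$ along $P$. The general tactic is to identify a subpath $Q$ of $P$ between two consecutive $T$-complete vertices, close it by a shortest antipath $R$ in $\overline{G}[T]$ starting at $t$, and compute parities: Berge-ness applied to the resulting holes and antiholes then forces the desired outcome for $T$.

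The main obstacle is the subcase where outcome (i) or (ii) holds for $T'$ but neither (i) nor (ii) holds for $T$. The parity discrepancy then corresponds to an odd number of edges $uv$ of $P$ that are $T'$-complete but not $T$-complete, i.e., with at least one of $u, v$ non-adjacent to $t$. After locating such an edge and joining $t$ to it by a shortest antipath through $T$, the only configurations that avoid creating an odd hole (closing with a path through $T$) or an odd antihole (closing with $R$) are exactly the prism of outcome (iii) and the length-three path of outcome (iv). In other words, (iii) and (iv) are the minimal obstructions to the parity statements (i), (ii) that remain compatible with the Berge hypothesis, which is precisely why they appear in the statement as additional outcomes.
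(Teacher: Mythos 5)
Your base case is fine, but the inductive step is where the whole difficulty of the Roussel--Rubio lemma lives, and your proposal does not actually carry it out. After reducing to the situation where outcome (i) or (ii) holds for $T'=T\setminus\{t\}$ but the number of $T$-edges has the wrong parity, you assert that ``the only configurations that avoid creating an odd hole \dots\ or an odd antihole \dots\ are exactly the prism of outcome (iii) and the length-three path of outcome (iv).'' That sentence is a restatement of the lemma's conclusion, not an argument: nothing in the proposal explains how an edge of $P$ that is $T'$-complete but misses $t$, together with ``a shortest antipath through $T$,'' yields two vertices $u,v\in T$ with the very specific leap adjacencies $N(u)\cap V(P)=\{x,x',y\}$ and $N(v)\cap V(P)=\{x,y',y\}$, or the odd antipath of outcome (iv). This is precisely the part the paper spends most of its proof on, and it uses machinery your scheme does not have: when $T$ is a stable set it marks the vertices of $P$ with neighbours in $T$, proves every ``interval'' is even or of length~1 with a common neighbour, and then runs an inclusion--exclusion (sieve) argument over \emph{all} proper subsets $S\subsetneq T$; when $T$ is not stable it takes \emph{both} ends $u,v$ of a longest antipath of $\overline{G}[T]$ and plays the parities for $T\setminus\{u\}$, $T\setminus\{v\}$ and $T\setminus\{u,v\}$ against each other to locate the special edge $x''y''$ that produces the leap or the hop. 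Removing a single non-cutvertex $t$ and comparing just two parities, as you propose, does not obviously give enough structure to manufacture either configuration, and you give no substitute argument.

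There is also a technical problem with your induction measure. You induct on $|T|$ alone, but the known proofs (including the paper's, which inducts on $|V(P)\cup T|$) also need to apply the statement to \emph{proper subpaths} of $P$ with the \emph{same} set $T$ -- this is exactly how the case of a $T$-complete internal vertex is disposed of, by splitting $P$ at that vertex and showing a leap or hop in either half would create an odd hole or antihole. With induction on $|T|$ only, that recursion is not available to you, and your proposal never says how the case of a $T$-complete internal vertex (where outcomes (iii) and (iv) must in fact be ruled out, not produced) is handled. So the proposal is a plausible opening move plus a correct base case, but the core of the proof -- upgrading a parity discrepancy to a leap or a hop, and handling $T$-complete internal vertices -- is missing.
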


\begin{proof}
  We prove the lemma by induction on $|V(P)\cup T|$.  If $P$ has
  length at most~$2$, then we have outcome~\ref{oeven} or~\ref{oodd}.
  So let us assume that $P$ has length at least $3$.  Put $P=xx'\cdots
  y'y$.  Let us suppose that outcomes~\ref{oleap} does not hold for $P$.
  We distinguish between three cases.

  {\noindent\bf Case 1:} There is a $T$-complete vertex in $P^*$.  Let $z$ be
  such a vertex.  By induction, we can apply the lemma to the path
  $xPz$ and $T$.  If $xPz$ has odd length and there is a leap $\{u,
  v\}$ for $xPz$ in $T$, then $(xPz)^* \cup \{u, v, y\}$ induces an odd
  hole.  If $xPz$ has length $3$ and its two internal vertices are the
  endvertices of an odd antipath $Q$ whose interior is in $T$, then
  $Q\cup\{y\}$ induces an odd antihole.  So it must be that the number
  of $T$-edges in $xPz$ and the length of $xPz$ have the same parity.
  The same holds for $zPy$.  So the number of $T$-edges in $P$ and
  the length of $P$ have the same parity, and we have
  outcome~\ref{oeven} or~\ref{oodd}.

  {\noindent\bf Case 2:} $T$ induces a stable set.  We denote by
  $\varepsilon$ the parity of the length of $P$.  Mark the vertices of
  $P$ that have at least one neighbor in $T$.  Call an \emph{interval}
  any subpath of $P$, of length at least~1, whose ends are marked and
  whose internal vertices are not.  Since $x$ and $y$ are marked, the
  edges of $P$ are partitioned by the intervals of $P$.
 
We claim that every interval of $P$ either has even length or has
length~1.  Indeed, suppose there is an interval of odd length, at
least 3, say $P' = x''\dots y''$, named so that $x, x'', y'', y$
appear in this order along $P$.  Let $u$ and $v$ be neighbors of $x''$
and $y''$ in $T$, respectively.  If $x''$ and $y''$ have a common
neighbor $t$ in $T$ then $P'\cup \{t\}$ induces an odd hole.  Hence
$u\neq v$, $x''\neq x$, $y''\neq y$, $v$ is not adjacent to $x''$, and
$u$ is not adjacent to $y''$.  If $x''\neq x'$, then $P'\cup \{ u, x,
v\}$ induces an odd hole. So, $x''=x'$ and similarly, $y''=y'$.
Hence, $\{u, v\}$ is a leap, a contradiction.  This proves our claim.

Hence, the number of intervals of length $1$ in $P$ has parity
$\varepsilon$.  Moreover, we claim that for every interval of
length~1, there is a vertex in $T$ adjacent to both its ends.  Indeed,
suppose that there is an interval $x''y''$ such that $x''$ and $y''$
do not have a common neighbor in $T$.  Let $u$ be a neighbor of $x''$
in $T$, and let $v$ be a neighbor of $y''$ with $u \neq v$,
$uy''\not\in E(G)$, and $vx'' \not\in E(G)$.  Note that $x\neq x''$
and $y\neq y''$.  If $x''\neq x'$, then $\{ u, x, v, x'', y''\}$
induces an odd hole.  So, $x''=x'$ and similarly $y''=y'$.  Now $\{u,
v\}$ is a leap, a contradiction.

For every $v\in T$, denote by $f(v)$ the set of all $\{v\}$-complete
edges of $P$.  Let $v_1, \dots, v_n$ be the elements of $T$. We know
that $|f(v_1) \cup \dots \cup f(v_n)|$  has parity $\varepsilon$,
since, from the previous paragraph, it is equal to the number of the
intervals of length~1. Moreover, by the sieve formula we have:

  \begin{eqnarray}
    |f(v_1)   \cup \dots \cup f(v_n)| & = & \sum_{i} |f(v_i)| \nonumber \\
    & & -  \sum_{i\neq j} |f(v_i) \cap f(v_j)| \nonumber \\
    & & \vdots \nonumber
  \end{eqnarray}
  \begin{eqnarray}
    & & + (-1)^{(k+1)} \sum_{I\subset \{1, \dots, n\}, |I|=k} |\cap_{i\in I} f(v_i)| \nonumber \\
    & & \vdots \nonumber \\
    & & + (-1)^{(n+1)} |f(v_1) \cap \dots \cap f(v_n)| \nonumber
  \end{eqnarray}

  By the induction hypothesis, we know that if $S\subsetneq T$, then
  the number of $S$-complete edges in $P$ has parity $\varepsilon$.
  Hence if $I\subsetneq \{1, \dots, n\}$, then $|\cap_{i\in I}
  f(v_i)|$ has parity $\varepsilon$.  Thus, we can rewrite the above
  equality modulo 2 as:
  
  \[
  |f(v_1) \cup  \dots \cup f(v_n)|= (2^n-2)  +
  (-1)^{(n+1)} |f(v_1)  \cap \dots \cap f(v_n)|
  \]
    
  Since $|f(v_1) \cup \dots \cup f(v_n)|$ has parity $\varepsilon$, it
  follows that $|f(v_1) \cap \dots \cap f(v_n)|$ has parity
  $\varepsilon$, meaning that the number of $T$-edges in $P$ has
  parity $\varepsilon$.  It follows that one of \ref{oeven} of
  \ref{oodd} holds.

  {\noindent\bf Case 3:} We are neither in Case~1 nor in Case~2 (so
  $T$ is not a stable set and there is no $T$-complete vertex in
  $P^*$).  Let $Q=u\cdots v$ be a longest path of $\overline{G}[T]$.
  So $Q$ has length at least $2$ (since $T$ is not a stable set), and
  $T\setminus \{u\}$ and $T \setminus \{v\}$ are anticonnected sets.
  By the induction hypothesis, we know that $P$ has an odd number of
  $T\setminus \{u\}$-edges and an odd number of $T \setminus
  \{v\}$-edges.  Note that a $T \setminus \{u\}$-edge and a $T
  \setminus \{v\}$-edge have no common vertex, for otherwise there
  would be a $T$-complete vertex in $P^*$.  In particular all $T
  \setminus \{u\}$-edges and $T\setminus\{v\}$-edges are different.

  Suppose that $Q$ has even length.  Let $x_u x'_u$ be a $T\setminus
  \{u\}$-edge of $P$ and $y'_v y_v$ be a $T\setminus \{v\}$-edge of
  $P$ such that, without loss of generality, $x, x_u, x'_u, y'_v, y_v,
  y$ appear in this order on $P$.  If $x'_u$ is non-adjacent to $y'_v$
  then $\{x'_u, y'_v\} \cup Q$ induces an odd antihole.  If $x\neq
  x_u$ then $\{x_u, y'_v\} \cup Q$ induces an odd antihole.  If $y_v
  \neq y$ then $\{x'_u, y_v\} \cup Q$ induces an odd antihole.  It
  follows that $P = x_u x'_u y'_v y_v$, but then $P\cup Q$ induces an
  odd antihole.  Thus $Q$ has odd length (at least $3$).

  Suppose that $T\setminus\{u,v\}$ is not anticonnected.  Since $T
  \setminus \{u\}$ and $T\setminus\{v\}$ are anticonnected, there
  exists a vertex $w$ in an anticomponent of $G[T \setminus \{u,v\}]$
  that does not contain $Q^*$ and such that $w$ is adjacent in
  $\overline{G}$ to at least one of $u,v$; but then $Q\cup\{w\}$
  induces in $\overline{G}[T]$ either a chordless path longer than $Q$
  or an odd hole, a contradiction.  So $T\setminus\{u,v\}$ is
  anticonnected.

  Now we know that there is an odd number of $T\setminus \{u,
  v\}$-edges in $P$ (by the induction hypothesis).  Recall that $P$
  has an odd number of $T\setminus\{u\}$-edges, an odd number of
  $T\setminus\{v\}$-edges, and that these are different, so these
  account for an even number of $T\setminus \{u, v\}$-edges; thus $P$
  has at least one $T\setminus \{u, v\}$-edge $x'' y''$ that is
  neither a $T\setminus \{u\}$-edge nor a $T \setminus \{v\}$-edge.
  We may assume that $x,x'',y'',y$ appear in this order along $P$ and
  that $y''\in P^*$.  So $y''$ is non-adjacent to one of $u, v$, say
  $v$.  Then $y''$ is adjacent to $u$, for otherwise $Q \cup \{y''\}$
  would induce an odd antihole.  Then $x''$ is non-adjacent to $u$,
  for otherwise $x''y''$ would be a $T\setminus \{v\}$-edge.  Then
  $x''$ is adjacent to $v$, for otherwise $Q\cup \{x''\}$ would induce
  an odd antihole.  Then $x''=x'$ for otherwise $Q\cup\{x'', y'', x\}$
  would induce an odd antihole, and similarly $y''=y'$.  So
  $P=xx''y''y$ and $Q\cup\{x'', y''\}$ is a chordless odd path of
  $\overline{G}$, and we have outcome~\ref{ohop}.
\end{proof}

It is not easy to see how useful  Lemma~\ref{l:w} is, so let us give
now a simple application, that is 3.1
from~\cite{chudnovsky.r.s.t:spgt}.

\begin{lemma}
  In a Berge graph, if a hole $C$ and an antihole $D$ both have
  length at least~8, then $|V(C) \cap V(D)| \leq 3$.   
\end{lemma}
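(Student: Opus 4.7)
Suppose for contradiction that $|V(C)\cap V(D)|\geq 4$ and fix $S\subseteq V(C)\cap V(D)$ of size $\min(|V(C)\cap V(D)|,5)$. Since $C$ is chordless of length $\geq 8$, each $v\in S$ has at most $2$ neighbors in $G[S]$; since $D$ is an antihole, each $v\in S$ has at most $2$ non-neighbors in $D$ and hence at least $|S|-3$ neighbors in $G[S]$. This forces $|S|\leq 5$; if $|S|=5$, then $G[S]$ would be $2$-regular on $5$ vertices, requiring $5$ edges, which contradicts the fact that any $5$-subset of the chordless cycle $C$ induces at most $4$ edges (it spans a union of paths, of total length at most $4$). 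Hence $|S|=4$. Among the $4$-vertex graphs with all degrees in $\{1,2\}$, only $P_4,\,2K_2,\,C_4$ are possible; $C_4$ would chord $C$, and $2K_2$ would force the two vertices of one component to share the same pair of antihole-neighbors in $D$, which is impossible for distinct vertices of an antihole of length $\geq 5$. Hence $G[S]\cong P_4$. Writing the path as $s_1 s_2 s_3 s_4$: these four vertices are consecutive along $C$, and the three non-edges among $S$, being antihole-edges of $D$, force $s_3,s_1,s_4,s_2$ to be four consecutive vertices of the antihole cycle of $D$ in that order.

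Let $d,d'\in V(D)\setminus V(C)$ be the antihole-neighbors of $s_2,s_3$ respectively that lie outside $S$. Then in $G$, $d$ is adjacent to $s_1,s_3,s_4$, $d'$ is adjacent to $s_1,s_2,s_4$, and $dd'$ is an edge (since $|D|\geq 8$ forces $d,d'$ to be at antihole-distance at least $3$). Let $\pi$ be the subpath of $C$ from $s_1$ to $s_4$ avoiding $s_2,s_3$; it has odd length $|C|-3\geq 5$ since $|C|$ is even in the Berge graph $G$. If $d$ had no neighbor on the interior of $\pi$, then the cycle $d\,s_1\,\pi\,s_4\,d$ would be a chordless odd hole of length $|C|-1$, contradicting Berge; so $d$ (and symmetrically $d'$) must have at least one neighbor on the interior of $\pi$.

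The plan to finish is to apply the Roussel--Rubio lemma (Lemma~\ref{l:w}) with a carefully chosen path $P$ of $C$ and an anticonnected set $T\subseteq V(D)$ so that all four outcomes of the lemma fail simultaneously. A key observation for the argument is that outcome~(iii) can be ruled out essentially because, if $T=V(D)\setminus S$, any leap pair in $T$ satisfying the prescribed neighborhoods on a path inside $S$ must be $\{d,d'\}$, but $d$ and $d'$ are $G$-adjacent and so cannot form a leap. The main obstacle is outcome~(iv): with the natural choice $P=s_4 s_3 s_2 s_1$ and $T=V(D)\setminus S$, the antihole cycle of $D$ provides an antipath $s_3-d'-\cdots-d-s_2$ with interior in $T$ of length $|D|-3$, which is odd since $|D|$ is even in the Berge graph $G$. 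Overcoming this requires refining the choice of $P$ and $T$ (or applying the lemma to more than one path) so that outcome~(iv) is also excluded---for instance, combining the Roussel--Rubio outcomes with the parity information on the neighborhoods of $d$ and $d'$ on $\pi$ (each of which must split $\pi$ into subpaths of length $1$ or even, and hence contains an odd number of consecutive pairs by parity of $|C|-3$) and the extra edge $dd'$, so as to produce an odd hole or odd antihole in $G$.
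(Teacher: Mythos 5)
Your first step --- showing that $|V(C)\cap V(D)|=4$ and that the intersection induces a $P_4$ whose three non-edges appear consecutively on the antihole --- is correct and in fact a more detailed version of what the paper asserts. But the proof is not complete: the entire derivation of a contradiction is left as a ``plan''. You correctly observe that applying Lemma~\ref{l:w} to the short path $s_4s_3s_2s_1$ with $T=V(D)\setminus S$ fails, because outcome~\ref{ohop} is genuinely satisfied there (the antipath of $D$ from $s_2$ to $s_3$ has odd length $|D|-3$ and interior in $T$), and you then gesture at ``refining the choice of $P$ and $T$'' without doing so. That refinement is exactly the missing content of the proof, so as written this is a genuine gap, not a cosmetic one.

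The fix, which is what the paper does, is to apply Lemma~\ref{l:w} to the \emph{long} path $\pi$ of $C$ from $s_1$ to $s_4$ (your $\pi$), still with $T=V(D)\setminus S$. Its ends $s_1,s_4$ are $T$-complete, its length $|C|-3\ge 5$ is odd, so outcomes~\ref{oeven} and~\ref{ohop} are excluded immediately --- the length-$3$ case never arises. For outcome~\ref{oodd}: if some internal vertex $v$ of $\pi$ were $T$-complete, then $v$ together with the antipath of $D$ from $s_2$ to $s_3$ would induce an odd antihole (since $v$, being an internal vertex of $\pi$, is non-adjacent to both $s_2$ and $s_3$); hence no internal vertex is $T$-complete, and since $s_1s_4$ is not an edge there are zero $T$-edges, contradicting oddness. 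Outcome~\ref{oleap} gives a leap $\{u,v\}\subseteq T$, i.e.\ two consecutive internal vertices of the antipath of $D$, and a path $P'$ from $u$ to $v$ with the same interior as $\pi$ and the same odd length; because that antipath has length at least~$5$, one of $s_2,s_3$ is complete to $\{u,v\}$ and anticomplete to $\pi^*$, closing $P'$ into an odd hole. Your structural observations about $d$, $d'$ and their neighbours on $\pi^*$ are true but are not needed once the lemma is applied to $\pi$; in particular the parity bookkeeping you sketch in the last sentence is never made to produce a contradiction.
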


\begin{proof}
  It is easy to check that $P_4$ is the only graph $H$ on at least four
  vertices such that both $H$ and $\overline{H}$ are subgraphs of some
  path.  It follows that if $|V(C) \cap V(D)| \geq 4$, then
  $V(C) \cap V(D)$ induces a $P_4$, say $abcd$.  So, there  is a path
  $P$ from $a$ to $d$ in $G$,  such that $abcdPa$ is the hole $C$; and
  there is an antipath $Q$ from $b$ to $c$ such that $bQcadb$ is the
  antihole $D$.  Note that $P$ and $Q$ are both of odd length, at
  least~5. 

  The ends of $P$ are $Q^*$-complete.  If $P^*$ contains a
  $Q^*$-complete vertex $v$, then $\{v\} \cup V(Q)$ induces an odd
  antihole.  Therefore, by Lemma~\ref{l:w} $Q*$ contains a leap $\{u,
  v\}$ for $P$, so some path $P'$ from $u$ to $v$ has the same
  interior as $P$.  Observe that $u$ and $v$ are consecutive along
  $Q$, and because of the length of $Q$, one of $b,c$ (say $b$) is
  complete to $\{u, v\}$.  It follows that $V(P') \cup \{b\}$ induces
  an odd hole.
\end{proof}

We now give a corrolary of~\ref{l:w} that is used constantly
in~\cite{chudnovsky.r.s.t:spgt} (where it is called 2.2).

\begin{corollary}
  \label{c:rr2}
  Let $T$ be an anticonnected set of vertices in a Berge graph $G$.
  If $P$ is a path with odd length at least~3, vertex-disjoint from
  $T$, whose ends are $T$-complete and such that no internal vertex of
  $P$ is $T$-complete, then every $T$-complete vertex has a neighbor
  in $P^*$.
\end{corollary}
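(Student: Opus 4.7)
I will argue by contradiction: suppose some $T$-complete vertex $w$ has no neighbor in $P^*$. The ends $x,y$ of $P$ trivially have neighbors in $P^*$ (namely $x',y'$); no vertex of $T$ is $T$-complete by the disjointness convention in the definition of ``complete to''; and no internal vertex of $P$ is $T$-complete by hypothesis. Hence I may assume $w \notin V(P)\cup T$, and apply Lemma~\ref{l:w} to $T$ and $P$.

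Outcome~\ref{oeven} is ruled out since $P$ has odd length. In outcome~\ref{oodd}, each edge of $P$ is incident to some internal vertex of $P$, which is not $T$-complete, so $P$ has zero $T$-edges; this contradicts the odd-number condition. Hence we must be in outcome~\ref{oleap} or outcome~\ref{ohop}, and the goal in each case is to exhibit an odd hole or an odd antihole of $G$, contradicting that $G$ is Berge.

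For outcome~\ref{oleap}, let $\{u,v\}\subseteq T$ be the leap for $P$. I take the cycle $C:\ u,\, x',\, \dots,\, y',\, v,\, w,\, u$, whose middle portion is the subpath $x'Py'$ of $P$. Its length equals $|E(P)|+2$, which is odd. To see $C$ is induced in $G$: $w$ has no neighbor in $P^*$; $u,v$ are non-adjacent by definition of a leap; the only $P^*$-neighbor of $u$ (resp.\ $v$) is $x'$ (resp.\ $y'$), from the leap's prescribed neighborhood $\{x,x',y\}$ (resp.\ $\{x,y',y\}$); and $x'Py'$ is chordless as a subpath of $P$. So $C$ is an odd hole, the desired contradiction.

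For outcome~\ref{ohop}, we have $P=xx'y'y$ and an antipath $Q=x',\, t_1,\, \dots,\, t_k,\, y'$ of odd length with $\{t_1,\dots,t_k\}\subseteq T$. Since $x'y'\in E(G)$, an antipath of length $1$ between $x'$ and $y'$ is impossible, so $Q$ has length at least $3$, i.e.\ $k$ is even and at least $2$. In $\overline G$ I consider the cycle $D:\ x',\, t_1,\, \dots,\, t_k,\, y',\, w,\, x'$ of length $k+3$ (odd). It is chordless in $\overline G$ because $Q$ is a chordless path of $\overline G$; because $w$ is adjacent in $\overline G$ to $x',y'$ (which it misses in $G$) and non-adjacent in $\overline G$ to every $t_i$ (since $w$ is $T$-complete in $G$); and because $x'y'\in E(G)$ gives $x'y'\notin E(\overline G)$. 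So $D$ is an odd antihole of $G$, again a contradiction. The whole argument is routine once Lemma~\ref{l:w} is available; the only fiddly part is the chordlessness bookkeeping in outcomes~\ref{oleap} and~\ref{ohop}, and the length-$3$ restriction in~\ref{ohop} is precisely what supplies the edge $x'y'$ that forces $Q$ to have length at least $3$.
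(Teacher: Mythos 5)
Your proof is correct and follows essentially the same route as the paper: apply Lemma~\ref{l:w}, observe that the parity outcomes are impossible (no $T$-edges since internal vertices are not $T$-complete), then complete the leap to an odd hole through the $T$-complete vertex and the hop antipath to an odd antihole through it. Your version just spells out the chordlessness and length bookkeeping that the paper leaves implicit.
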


\begin{proof}
  Let $v$ be a $T$-complete vertex, and suppose that $v$ has no
  neighbor in $P^*$ (so $v$ is not in $V(P)\cup T$).  Apply
  Lemma~\ref{l:w} to $T$ and $P$.  If outcome~\ref{oleap} of
  Lemma~\ref{l:w} holds, then a path of odd length with same interior
  as $P$ joins the members of the leap.  Together with $v$, it forms
  an odd hole, a contradiction.  If outcome~\ref{ohop} of
  Lemma~\ref{l:w} holds, then the antipath of odd length can be
  completed to an odd antihole through $v$, a contradiction.
\end{proof}

\subsection*{Further reading}

Another proof of the Roussel--Rubio lemma is given in Maffray and
Trotignon~\cite{nicolas:artemis}, where some applications are also
presented.  The proof given here in the case when $T$ is a stable set
is due to Kapoor, Vu\v skovi\'c and Zambelli,
see~\cite{nicolas.kristina:RR} where several very simple applications
are presented.

\section{Book from the Proof}
\label{sec:bfp}

The goal of this section is to present a self-contained lemma
of~\cite{chudnovsky.r.s.t:spgt} in order to give some taste of the
technicalities.  Let us replace this lemma in its context by stating
informally how line graphs are handled. 

A line graph $K$ of a bipartite graph is formed of a bunch of cliques,
and a bunch of paths linking them.  If $K$ is contained in a Berge
graph $G$, a vertex $v\in V(G\setminus K)$ is \emph{major} if it has
many (here at least~2) neighbors in each of the cliques, and
\emph{minor} if it has neighbors in at most one of the paths, or in at
most one of the cliques.  An important result is that every vertex is
major or minor, or allows one to obtain a larger line graph.  This is not
fully true (Fig.~\ref{f:prism+v} contains counter-examples), but
vertices that are neither major nor minor can be considered as part of
some of the paths in what is called a generalized line graph.
Therefore, when the generalized line graphs are properly defined, it
is true that every vertex is major or minor w.r.t.\ a maximal
generalized line graph.  A next step is to prove that connected
(sometimes anticonnected) components of vertices of $G\setminus K$
behave in fact as a vertex.  A reader wanting to know more details for
components can read the proof of 10.1 in~\cite{chudnovsky.r.s.t:spgt}
and we deal with anticomponents below.

If there are no major vertices, the graph is formed of the line graph,
and possibly a bunch of minor components.  If some of these components
attach to a clique, then there is a balanced skew partition, and if some
attach to a path, there is a 2-join.  If there are major vertices,
then it can be proved that an anticomponent of these is formed of
vertices that all major \emph{in the same way}, meaning that they all
attach to exactly the same vertices of the cliques.  Therefore, an
anticomponent of major vertices is complete to its attachment, and
forms a skew cutset separating parts of the line graph.  The next
lemma (it is a variant of 7.3 from~\cite{chudnovsky.r.s.t:spgt})
proves a statement of this form.  It illustrates another breakthrough
made in~\cite{chudnovsky.r.s.t:spgt}: how the Roussel-Rubio lemma can
be used to find skew partitions.

\begin{lemma}
  In a Berge graph $G$, let $K$ be a prism with triangles $a_1a_2a_3$
  and $b_1b_2b_3$ and paths $P_i = a_i\dots b_i$, $i=1, 2, 3$. Suppose
  that for $i=1, 2, 3$, $P_i$ has length at least~2.  Let $Y$ be an
  anticonnected set of vertices such that each of them have at least
  two neighbors in $\{a_1, a_2, a_3\}$ and at least two neighbors in
  $\{b_1, b_2, b_3\}$ (so $Y$ is disjoint from $K$).  Then at least
  two members of $\{a_1, a_2, a_3\}$ and at least two members of
  $\{b_1, b_2, b_3\}$ are $Y$-complete.
\end{lemma}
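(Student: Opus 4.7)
The strategy is a proof by contradiction using the Roussel--Rubio lemma (Lemma~\ref{l:w}) and its Corollary~\ref{c:rr2}. By the symmetry between the two triangles of the prism, it is enough to prove that at least two of $a_1, a_2, a_3$ are $Y$-complete; the $b$-side then follows by an identical argument. Assume for contradiction that at most one of $a_1, a_2, a_3$ is $Y$-complete, and relabel so that $a_1$ and $a_2$ are both not $Y$-complete. Pick $y_1 \in Y \setminus N(a_1)$ and $y_2 \in Y \setminus N(a_2)$; since every vertex of $Y$ has at least two neighbours in $\{a_1,a_2,a_3\}$, we have $y_1 \sim a_2, a_3$ and $y_2 \sim a_1, a_3$.

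Exploiting the anticonnectedness of $Y$, I would choose an antipath $Q = z_0 z_1 \cdots z_t$ in $\overline{G}[Y]$ with $z_0 \in Y\setminus N(a_1)$ and $z_t \in Y\setminus N(a_2)$, of minimum length. Then $z_0 \sim a_2, a_3$ and $z_t \sim a_1, a_3$, and the minimality forces $z_i \sim a_1$ and $z_i \sim a_2$ for every interior vertex $z_i$ ($0 < i < t$). Consequently, setting
\[
T_1 := \{z_1, \ldots, z_t\}, \qquad T_2 := \{z_0, \ldots, z_{t-1}\},
\]
both are anticonnected (sub-antipaths of $Q$), $a_1$ is $T_1$-complete, and $a_2$ is $T_2$-complete. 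Note also that $a_3$ is adjacent to both $z_0$ and $z_t$, and that, because $V(Q)$ is an antipath in $\overline{G}$, two vertices of $V(Q)$ are non-adjacent in $G$ if and only if they are consecutive on $Q$.

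The plan is then to feed this setup into Lemma~\ref{l:w} using carefully chosen chordless odd-length prism paths with one end on the triangle $a_1a_2a_3$ and the other on the triangle $b_1b_2b_3$, for example the paths $a_i P_i b_i b_j$ or $a_i a_j P_j b_j$ of length $|P_i|+1 \geq 3$, together with the anticonnected set $T_1$ or $T_2$. Among the four outcomes of the lemma: \ref{oeven} is ruled out because the selected path has odd length; \ref{ohop} is ruled out as soon as the path has length at least $5$, which is ensured by $|P_i| \geq 2$; \ref{oodd} yields parity information on the $T_i$-edges along the path; and \ref{oleap} produces a leap $\{u,v\}$ for the path inside $T_i$, which by the observation above must be a pair $\{z_k, z_{k+1}\}$ of consecutive interior vertices of $Q$, giving a second prism induced by the path together with $\{u,v\}$. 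Combining the data thus obtained with the antipath $Q$ and with the pivot vertex $a_3$ (the common neighbour of $z_0$ and $z_t$) produces in each case either an induced odd hole (via the new prism) or an induced odd antihole (via the antipath extended through $a_3$), contradicting the Berge hypothesis on $G$.

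The hard part will be the case analysis needed to show that every surviving outcome of Roussel--Rubio actually yields an odd hole or odd antihole. Two hypotheses play the decisive role there: the bound $|P_i| \geq 2$, which provides enough room in the prism paths to forbid outcome~\ref{ohop} and to accommodate the neighbours of the leap endpoints, and the minimality of $Q$, which is what gives $T_1$ and $T_2$ good enough completeness properties with respect to $a_1$ and $a_2$ for the lemma to bite. The antipath $Q$ together with the vertex $a_3$ provides the glue that converts the outcomes of Lemma~\ref{l:w} into the forbidden induced subgraphs.
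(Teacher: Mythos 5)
Your setup (a shortest antipath $Q=z_0\cdots z_t$ inside $Y$ between a non-neighbour of $a_1$ and a non-neighbour of $a_2$, giving anticonnected sets $T_1,T_2$ with $a_1$ $T_1$-complete and $a_2$ $T_2$-complete) is fine as far as it goes, but the central step of your plan has an unverified hypothesis that your setup cannot supply. Lemma~\ref{l:w} and Corollary~\ref{c:rr2} require \emph{both} ends of the chosen path to be complete to the anticonnected set. For every path you propose ($a_iP_ib_ib_j$, $a_ia_jP_jb_j$, \dots) one end is a vertex of the $b$-triangle, and nothing in your construction makes any $b_j$ (or any vertex near it) complete to $T_1$ or $T_2$: the hypothesis only says each $z_i$ has at least two neighbours among $\{b_1,b_2,b_3\}$, which does not produce a single $b_j$ adjacent to all of $T_1$. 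By reducing ``by symmetry'' to the $a$-side you have effectively discarded the $b$-side hypothesis at exactly the point where it is needed. The paper avoids this by choosing the antipath differently: $Q$ runs \emph{between two vertices of a triangle} (say $a_1$ to $a_2$) with interior in $Y$, and is shortest among all antipaths joining two vertices of \emph{either} triangle. That global minimality is what delivers the two facts your proposal lacks: $a_3$ is $Q^*$-complete \emph{and} some $b_i$ is $Q^*$-complete; the latter forces $Q$ to have even length (since $Q\cup\{b_i\}$ is an antihole), and the subsequent analysis runs along the prism path $a'_1P_1b_1b_2P_2a'_2$, whose relevant vertices are shown to be $Q^*$-complete before Corollary~\ref{c:rr2} is invoked with $a_3$ as the outside $Q^*$-complete vertex.

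Two further concrete problems. First, your dismissal of outcome~\ref{ohop} is arithmetically wrong: $|P_i|\geq 2$ only guarantees your paths have length at least $3$, not at least $5$, and length $3$ is precisely when outcome~\ref{ohop} can occur. Second, you never determine the parity (or even a lower bound on the length) of your antipath $Q$, whereas in the paper the parity of $Q$ --- even, of length at least $4$ --- is the engine of the final parity contradiction; without an analogue of this, ``combining the data'' from outcomes~\ref{oodd} and~\ref{oleap} has no contradiction to converge to. So this is a genuine gap: the proposal is a plan whose key applications of the Roussel--Rubio lemma do not have their hypotheses met, and repairing it essentially forces you back to the paper's choice of $Q$ with ends on a triangle and minimality over both triangles.
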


\begin{proof}
  Suppose not; then there is an antipath with interior in $Y$ joining
  two vertices both in $\{a_1, a_2, a_3\}$ or both in $\{b_1, b_2,
  b_3\}$.  Let $Q$ be a shortest such antipath.  Suppose up to
  symmetry that $Q$ is from $a_1$ to $a_2$.  Every vertex in $Y$ is
  adjacent to either $a_1$ or $a_2$, so $Q$ has length at least~3.
  From the minimality of $Q$, $a_3$ is $Q^*$-complete, and so is at
  least one of $\{b_1, b_2, b_3\}$, say $b_i$.  Since $Q$ can be
  completed to an antihole via $a_1b_ia_2$, it follows that $Q$ has
  even length, therefore at least 4.  We set $Q = a_1 q_1 \dots q_n
  a_2$.  Let $a'_i$ be the neighbor of $a_i$ in $P_i$, $i=1, 2$ and $P
  = a'_1 P_1 b_1 b_2 P_2 a'_2$.

  \begin{claim}
    \label{c:xe}
    At least one internal vertex of $P$ is $(Q^*\setminus q_1)$-complete
    and at least one internal vertex of $P$ is $(Q^*\setminus
    q_n)$-complete.
  \end{claim}
  
  \begin{proofclaim}
    If one of $b_1$ or $b_2$ is $Q^*$-complete, the claim is obviously
    true. Otherwise, none of $b_1, b_2$ is $Q^*$-complete so there
    exists a antipath from $b_1$ to $b_2$ whose interior in $Q^*$, and
    from the minimality of $Q$ this antipath has the same interior as
    $Q$. It follows that one of $b_1, b_2$ is complete to
    $Q^*\setminus q_1$ and the other one is complete to $Q^*\setminus
    q_n$.
  \end{proofclaim}

  \begin{claim}
    \label{c:C}
    If an internal vertex of $P$ is $(Q^*\setminus q_1)$-complete or
    $(Q^* \setminus q_n)$-complete, then it is $Q^*$-complete.  If $a'_1$
    is $(Q^*\setminus q_1)$-complete, then it is $Q^*$-complete.
  \end{claim}
  
  \begin{proofclaim}
    If an internal vertex $v$ of $P$ is $(Q^*\setminus q_n)$-complete,
    then it is $Q^*$-complete for otherwise, $v q_n Q a_1 v$ is an odd
    antihole.  If $v$ is $a'_1$ or is an internal vertex of $P$, and $v$  is
    $(Q^*\setminus q_1)$-complete, then $v$ is $Q^*$-complete, for
    otherwise, $v q_1 Q a_2 v$ is an odd antihole.
  \end{proofclaim}

  If both $a'_1, a'_2$ are $Q^*$-complete, then $Q$ can be completed
  to an odd antihole via $a_1 a'_2 a'_1 a_2$, a contradiction.  So, up
  to symmetry, we suppose from here on that $a'_1$ is not
  $Q^*$-complete.  It follows by~(\ref{c:C}) that $a'_1$ is not $Q^*
  \setminus q_1$ complete.

  Call $x$ the $Q \setminus q_1$ complete vertex of $P$ closest to
  $a'_1$ along $P$.  By~(\ref{c:xe}), $x$ exists and is an internal
  vertex of $P$. By~(\ref{c:C}), $x$ is in fact $Q^*$-complete.

  If $a'_1 a_1 P x$ has odd length, then by Corollary~\ref{c:rr2} applied to
  $a_1 a'_1 Px$ and $Q^* \setminus q_1$, there is a contradiction
  because $a_3$ is $(Q^* \setminus q_1)$-complete and has no neighbor in
  the interior $a_1 P x$.  So $a_1 P x$ has even length.  It follows
  that $x P a'_2 a_2$ has odd length.  By Corollary~\ref{c:rr2} applied to
  $xP a'_2 a_2$ and $Q^* \setminus q_n$, and because of $a_3$, there
  must be an internal vertex $v$ of $x P a'_2 a_2$ that is $(Q^*
  \setminus q_n)$-complete.  By~(\ref{c:C}), $v$ is in fact
  $Q^*$-complete.  But then, $a_3a_1 a'_1 P x$ is odd, its ends are
  $Q^*$-complete, but none of its internal vertex is $Q^*$-complete.
  This contradicts Corollary~\ref{c:rr2} because of $v$.
\end{proof}

The lemma above shows how the Roussel--Rubio lemma `generates'
antiholes to provide contradictions.  Maybe it can be used to
invistigate the structure of several subclasses of Berge graphs, for
instance the next one.

\begin{question}
  \label{q:noAh}
  Is there a structural characterization of Berge graphs with no
  antihole of length at least~6?
\end{question}

\subsection*{Further reading}

About technicalities, the best reading is of course Chudnovsky,
Robertson, Seymour and Thomas~\cite{chudnovsky.r.s.t:spgt}.  The paper
is very well organized: the first four sections are devoted to
technical lemmas that are used extensively in the sequel.  But as a
result of this wise organization, it is very hard to extract a
meaningful part: it is difficult to feel how useful  the
technical lemmas are without knowing the sequel, and the sequel is
difficult to understand without mastering the technicalities of the
lemmas.  A reader seeking the easiest parts
from~\cite{chudnovsky.r.s.t:spgt} should try 15.1, Section 15 or
Section 16.  Section 10 is devoted to the even prism and is a
self-contained chunk that has fewer technicalities than the rest of the
paper (but still relies a lot on technical lemmas of the previous
sections).

A reader might want to read self-contained papers using the same kind
of technicalities as \cite{chudnovsky.r.s.t:spgt} in simpler
situations.  For attachement to line graphs, a possible reading is
L\'ev\^eque, Maffray and Trotignon~\cite{nicolas:isk4} (where
Question~\ref{q:noPyWh} is partially answered by the way).  About how
attachments to a wheel can be used to decompose a class, Radovanovi\'c
and Vu\v skovi\'c~\cite{radovanovicV:theta} is a good reading.

\section{Recognizing perfect graphs}

From here on, we investigate works on perfect graphs that were done
after the proof of the SPGT.  Soon after the proof of the SPGT,
another open question was solved.  Chudnovsky, Cornu\'ejols, Liu,
Seymour and Vu\v skovi\'c~\cite{chudnovsky.c.l.s.v:reco} found a
polynomial time algorithm that decides whether an input graph is
Berge.  Observe that this algorithm is independent from the SPGT.  It
takes any graph $G$ as an input and outputs in time $O(n^9)$ an odd
hole-or-antihole of $G$ (if any).  We give here a brief outline (that
is a copy from the introduction of~\cite{chudnovsky.c.l.s.v:reco}).
In what follows, $G$ is the input graph of the algorithm. 
 
\begin{quote}
  [\dots] we would like to decide either that $G$ is not Berge, or
  that $G$ contains no odd hole. (To test Bergeness, we just run this
  algorithm on $G$ and then again on the complement of $G$.)  If there
  is an odd hole in $G$, then there is a shortest one, say $C$. A
  vertex of the remainder of $G$ is $C$-major if its set of neighbours
  in $C$ is not a subset of the vertex set of any 3-vertex path of
  $C$; and $C$ is \emph{clean} (in $G$) if there are no $C$-major
  vertices in $G$.  If there happens to be a clean shortest odd hole
  in $G$, then it stands out and can be detected relatively easily;
  and that essentially is the first step of our algorithm, a routine
  to test whether there is a clean shortest odd hole. The remainder of
  the algorithm consists of reducing the general problem to the
  `clean' case that was just handled.  If $C$ is a shortest odd hole
  in $G$, let us say a subset $X$ of $V(G)$ is a \emph{cleaner} for
  $C$ if $X \cap V(C)=\emptyset$ and every $C$-major vertex belongs to
  $X$. Thus if $X$ is a cleaner for $C$ then $C$ is a clean hole in
  $G\setminus X$.  The idea of the remainder of the algorithm is to
  generate polynomially many subsets of $V(G)$, such that if there is
  a shortest odd hole $C$ in $G$, then one of the subsets will be a
  cleaner for $C$. If we can do that, then we delete each of these
  subsets in turn, thereby generating polynomially many induced
  subgraphs; and we know that there is an odd hole in $G$ if and only
  if in one of these subgraphs there is a clean shortest odd
  hole. Thus we can decide whether $G$ has an odd hole by testing
  whether any of these subgraphs has a clean shortest odd hole.
\end{quote}

\begin{theorem}[Chudnovsky, Cornu\'ejols, Liu,
  Seymour and Vu\v skovi\'c 2002]
  There exists an algorithm that decides whether an input
  graph is Berge in time  $O(n^9)$.  
\end{theorem}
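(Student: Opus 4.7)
The plan is to follow the two-phase strategy described in the quoted outline, run it once on $G$ and once on $\overline{G}$, and combine the results; by Theorem~\ref{th:pgt} (or just by the definition of Berge) this reduces the task to detecting a shortest odd hole in a graph that we may assume has no odd antihole, since otherwise the other run has already detected it.

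\textbf{Phase 1 (clean detection).} First I would build a subroutine that decides whether $G$ contains a \emph{clean} shortest odd hole, that is, a shortest odd hole $C$ with no $C$-major vertex outside it. The point is that when $C$ is clean, every vertex outside $C$ has its neighbours in $V(C)$ confined to a $3$-vertex subpath, so $C$ stands out as a shortest odd path between specified endpoints in a graph from which the obvious obstacles have been removed. Concretely, I would enumerate all constant-size configurations of consecutive vertices of a hypothetical $C$ (say five vertices $a_1,\dots,a_5$), then in $G$ delete every vertex whose neighbourhood on $\{a_1,\dots,a_5\}$ is incompatible with cleanliness, and search for a shortest odd $a_1$--$a_5$ path with appropriate parity and non-adjacency conditions. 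If a clean shortest odd hole exists, at least one of the $O(n^{5})$ starting configurations exposes it; each test reduces to a shortest-path computation, so this routine is polynomial.

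\textbf{Phase 2 (cleaning).} The main task is to produce a polynomial-size family of subsets $X_1,\dots,X_N\subseteq V(G)$ with the guarantee that, for every shortest odd hole $C$ in $G$, some $X_i$ is a \emph{cleaner} for $C$: namely $X_i\cap V(C)=\emptyset$ and every $C$-major vertex lies in $X_i$. Running Phase~1 on each $G\setminus X_i$ then decides whether $G$ has an odd hole. The construction of the cleaners rests on a structural classification of $C$-major vertices when $C$ is a shortest odd hole and $G$ has no odd antihole: one proves that any such vertex is forced by a bounded number of vertices of $G$, in the sense that its membership in the $C$-major set can be read off from adjacencies to a constant-size witness tuple. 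Each constant-size tuple then yields a candidate cleaner written as an appropriate common neighbourhood/non-neighbourhood set, giving $O(n^{k})$ candidates that can be generated without knowing $C$.

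\textbf{Main obstacle and complexity.} The hard step is the structural lemma that a polynomial family of cleaners suffices. One must show that if $v$ is $C$-major for a shortest odd hole $C$, then some small collection of vertices of $G$ certifies this in a $C$-independent way; the argument uses the parity and minimum length of $C$, the absence of odd antiholes through $v$, and attachment arguments in the spirit of the Roussel--Rubio lemma of Section~\ref{sec:RR} to rule out pathological configurations. Once this structural lemma is available, the algorithm's correctness is immediate and the complexity follows by book-keeping: Phase~1 costs roughly $O(n^{c})$, Phase~2 enumerates $O(n^{c'})$ candidate cleaners, and tuning the constants $c,c'$ in the structural argument yields the claimed $O(n^{9})$ bound.
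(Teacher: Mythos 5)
Your outline follows exactly the strategy that the survey itself presents (and quotes from the original paper): reduce Bergeness testing to odd-hole detection by running the routine on $G$ and $\overline{G}$, detect a \emph{clean} shortest odd hole directly, and reduce the general case to the clean case by generating polynomially many candidate cleaners obtained as common neighbourhoods of constant-size witness sets. So the approach is the right one; the issues are with what you leave out. First, your Phase~1 correctness claim (``if a clean shortest odd hole exists, one of the $O(n^5)$ starting configurations exposes it via a shortest-path computation'') is false for arbitrary graphs: the shortest-path-detector argument only works after a preprocessing step that first searches for pyramids (and a few other easily detectable non-Berge configurations such as jewels) and stops if one is found. The survey flags this explicitly (``Detecting a pyramid is needed in a preprocessing step that we omitted to explain in the sketch above''), and the sample Lemma~\ref{l:kpri} illustrates why: the path-replacement argument is carried out \emph{in a pyramid-free graph}, and without that hypothesis the shortest replacement path can destroy or simulate the structure you are looking for. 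As written, your Phase~1 would both miss clean holes and has no argument that a ``yes'' answer really certifies an odd hole.

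Second, the heart of the theorem --- the structural lemma that for every shortest odd hole $C$ all $C$-major vertices are contained in one of polynomially many sets that can be enumerated without knowing $C$ --- is only asserted in your proposal (``one proves that any such vertex is forced by a bounded number of vertices''). That statement is essentially the entire content of the recognition paper, proved via Roussel--Rubio-type lemmas under the assumption that no pyramid/jewel was found in the preprocessing; invoking it without proof means the proposal is a restatement of the algorithm's architecture rather than a proof. Relatedly, the $O(n^9)$ bound cannot be obtained by ``tuning the constants $c,c'$'': it comes from the specific design (the pyramid test itself costs $O(n^9)$, the cleaner family has size $O(n^5)$, and each clean-hole test is cheap enough that the product stays within $O(n^9)$), so the complexity bookkeeping needs those concrete ingredients, not adjustable parameters.
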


Let us now explain briefly the two steps from the sketch above.  The
second step relies on a powerful technique discovered by Conforti and
Rao~\cite{ConfortiR:93}, called \emph{cleaning}, which consists in
`guessing' a cleaner.  Here, the way to guess the cleaner comes from
Roussel--Rubio like lemmas, saying that the set $X$ of vertices
vertices to be `guessed', that are all major vertices of some possible
smallest odd hole, are all common neighbors of some set $S$ (of size
bounded by constant) of neighbors.  Therefore, one can enumerate all
possible $S$'s by brute-force, and for each of them nominate $X$ as
the set of $S$-complete vertices.

The first step relies on the \emph{shortest path detector}, a method
designed by Chudnovsky and Seymour~\cite{chudnovsky.c.l.s.v:reco} that
is used twice in the algorithm: once to detect a clean odd hole, and
once to detect a pyramid.  Detecting a pyramid is needed in a
preprocessing step that we omitted to explain in the sketch above.  It
consists in the detection of several substructures certifying that the
graph is not Berge, and one of them is the pyramid (that contains an
odd hole).

To explain the shortest path detector, we give here a simpler
algorithm that detects prisms in graphs with no pyramids (recall that
in general graphs, the problem is NP-complete).  This algorithm is not
published because a faster one exists, see~\cite{maffray.t:reco}.  A
shortest path detector always relies on a lemma stating in the
smallest substructure of the kind that we are looking for, a path
linking two particular vertices of the substructure can be replaced by
any shortest path.

\begin{lemma}
  \label{l:kpri}
  Let $G$ be a graph with no pyramid.  Let $K$ be a smallest prism in
  $G$.  Suppose that $K$ is formed by paths $P_1, P_2, P_3$,
  with triangles $\{a_1, a_2, a_3\}$ and $\{b_1, b_2, b_3\}$, so that,
  for $i=1, 2, 3$, path $P_i$ is from $a_i$ to $b_i$.  Then:
  
  If $R_i$ is any shortest path from $a_i$ to $b_i$ whose interior
  vertices are not adjacent to $a_{i+1}$, $a_{i+2}$, $b_{i+1}$ or
  $b_{i+2}$, then $R_{i}, P_{i+1}, P_{i+2}$ form a prism on $|V(K)|$
  vertices in $G$, with triangles $\{a_1, a_2, a_3\}$ and $\{b_1, b_2,
  b_3\}$ (the addition of subscripts is taken modulo 3).
\end{lemma}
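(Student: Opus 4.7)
The plan is to show that $R_1\cup P_2\cup P_3$ forms a prism with the two triangles $\{a_1,a_2,a_3\}$ and $\{b_1,b_2,b_3\}$; the vertex count then falls out of the minimality of $K$, since $P_1$ is itself a candidate for $R_1$ (its interior is non-adjacent to $\{a_2,a_3,b_2,b_3\}$ by the prism structure), so $|V(R_1)|\le|V(P_1)|$, while a prism on fewer vertices than $K$ would contradict the minimality of $K$. By symmetry we take $i=1$.

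For the prism structure itself, note first that since $\{a_2,a_3,b_2,b_3\}$ is pairwise adjacent in the triangles and $R_1^*$ is anticomplete to this set, $R_1^*$ already avoids those four vertices for free. So the only possible obstruction is that $R_1^*$ either meets or sends an edge into $P_2^*\cup P_3^*$. Suppose such an obstruction exists, and let $v$ be the interior vertex of $R_1$ closest to $a_1$ along $R_1$ that lies in $P_2^*\cup P_3^*$ or has a neighbor there; by symmetry between $P_2$ and $P_3$, assume the witness is on the $P_2$ side. The plan is then to build an induced pyramid with triangle $\{a_1,a_2,a_3\}$, contradicting the hypothesis that $G$ has no pyramid.

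In every case the three paths of the pyramid follow the same pattern: a subpath of $R_1$ leading to $a_1$, a subpath of $P_2$ leading to $a_2$, and a long path ending at $a_3$ that runs through $P_2$, across the triangle edge $b_2 b_3$, and through $P_3$. The apex is chosen case by case. When $v\in P_2^*$, take apex $v$ with paths $a_1 R_1 v$, $a_2 P_2 v$, and $a_3 P_3 b_3 b_2 P_2 v$; the last two have length $\ge 2$ because $v\not\sim a_2,b_2$. When $v\notin V(P_2\cup P_3)$ has a unique neighbor $w$ in $P_2^*\cup P_3^*$ (on the $P_2$ side), take apex $w$ and paths $a_1 R_1 v w$, $a_2 P_2 w$, $a_3 P_3 b_3 b_2 P_2 w$. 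When $v$ has neighbors on both $P_2^*$ and $P_3^*$, take apex $v$ with the two short paths $v w_2 P_2 a_2$ and $v w_3 P_3 a_3$, where $w_j$ is the neighbor of $v$ on $P_j$ closest to $a_j$. Finally, when $v$ has two or more neighbors in $P_2^*$ and none in $P_3^*$, again take apex $v$ and use the extremal neighbors $w_\ell, w_r$ of $v$ on $P_2$ (closest to $a_2$ and $b_2$ respectively), so that every other $P_2$-neighbor of $v$ lands in the gap $(w_\ell P_2 w_r)^*$ which is not part of the pyramid.

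The main obstacle is the no-chord check for each construction. Internal vertex-disjointness of the three paths follows from the choice of $v$ together with the prism structure of $K$ (in particular $P_2$ and $P_3$ are internally disjoint, and the two subpaths of $P_2$ used in the pyramid lie on opposite sides of $v$ or the apex). The absence of chords involving $(a_1 R_1 v)^*$ uses the minimality of $v$ along $R_1$: every earlier interior vertex of $R_1$ is neither in $V(P_2\cup P_3)$ nor adjacent to $P_2^*\cup P_3^*$, and by hypothesis is not adjacent to $\{a_2,a_3,b_2,b_3\}$ either. The delicate check is that the apex has exactly one neighbor in each of the three pyramid paths; this uses the chordlessness of $P_2$ and $P_3$, the hypothesis $v\not\sim a_2,a_3,b_2,b_3$, and in the multi-neighbor case the extremal choice of $w_\ell, w_r$ which confines the extra neighbors to the excised gap of $P_2$.
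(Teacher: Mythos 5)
Your overall strategy is the same as the paper's (take the interior vertex $v$ of $R_1$ closest to $a_1$ that offends, and extract a pyramid with triangle $\{a_1,a_2,a_3\}$), and your cases where $v$ has a unique neighbour on the prism, or neighbours on both $P_2^*$ and $P_3^*$, do go through. But your last case has a genuine gap: when $v$ has \emph{exactly two} neighbours in $P_2^*$ and they are \emph{consecutive} on $P_2$, your extremal vertices satisfy $w_\ell w_r\in E(G)$, the gap $(w_\ell P_2 w_r)^*$ is empty, and $w_\ell w_r$ is an edge between the second and third paths of your would-be pyramid that is incident neither to the apex nor to the triangle, so the construction is not a pyramid. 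Worse, no pyramid can be extracted from this configuration at all: a prism together with a vertex adjacent to exactly two consecutive interior vertices of one path is a line graph (the ``line graph'' outcome of Fig.~\ref{f:prism+v}), hence claw-free, while every pyramid contains a claw centred at its apex. So pyramid-freeness alone cannot close this case. The paper closes it differently: $a_1R_1v$, $a_2P_2w_\ell$ and $a_3P_3b_3b_2P_2w_r$ form a prism with triangles $\{a_1,a_2,a_3\}$ and $\{v,w_\ell,w_r\}$; since $v$ is an interior vertex of $R_1$ and $|V(R_1)|\le|V(P_1)|$, this prism has at most $|V(K)|-1$ vertices, contradicting the choice of $K$ as a \emph{smallest} prism. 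In particular your opening claim that the minimality of $K$ is needed only for the final vertex count is wrong: it is an essential ingredient of the case analysis, and without invoking it the adjacent-two-neighbour case cannot be finished.

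Two smaller remarks. Your case $v\in P_2^*$ is in fact vacuous: the predecessor of $v$ on $R_1$ is adjacent to $v\in P_2^*$, so it is either an earlier interior vertex of $R_1$ with a neighbour in $P_2^*$ (contradicting the choice of $v$) or $a_1$ itself (contradicting the prism structure of $K$); harmless, but it shows the offending vertex may be assumed outside $V(P_2)\cup V(P_3)$. The paper exploits this by working with the single chordless path $Q=a_2'P_2b_2b_3P_3a_3'$ and the two neighbours $y,z$ of the offending vertex on $Q$ closest to its two ends; this merges your remaining cases into three ($y=z$; $y\neq z$ nonadjacent; $yz\in E(G)$) and makes the problematic subcase $yz\in E(G)$ --- precisely the one you overlooked --- impossible to miss.
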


\begin{proof}
  Suppose that the lemma fails for say $i=1$.  So, some interior
  vertex of $R$ has neighbors in the interior of $P_2$ or $P_3$.  Let
  $x$ be such a vertex, closest to $a_1$ along $R$.  Let $a'_2$
  (resp.\ $a'_3$) be the neighbor of $a_2$ (resp.\ $a_3$) along $P_2$
  (resp.\ $P_3$).  Let $Q = a'_2 P_2  b_2  b_3  P_3 
  a'_3$.  Let $y$ (resp.\ $z$) be the neighbor of $x$ closest to
  $a'_2$ (resp.\ $a'_3$) along $Q$.

  If $y=z$ then $y x  R  a_1$, $y  Q  a'_2  a_2$ and
  $y  Q  a'_3  a_3$ form a pyramid, a contradiction.  If
  $y\neq z$ and $yz\notin E(G)$ then $x  R  a_1$, $x y  Q
   a'_2  a_2$ and $x  z  Q  a'_3  a_3$ form a
  pyramid, a contradiction.  If $yz\in E(G)$ then $x  R  a_1$, $y  Q
   a'_2  a_2$ and $z  Q  a'_3  a_3$ form a prism on
  less vertices  than $K$, a contradiction. 
\end{proof}

Now detecting a prism in a graph with no pyramid can be performed as
follows.  For all 6-tuples $(a_1, a_2, a_3, b_1, b_2, b_3)$ compute
three shortest paths $R_i$ in $G\setminus ((N[a_{i+1}] \cup N[a_{i+2}] \cup
N[b_{i+1}] \cup N[b_{i+2}]) \setminus \{a_i, b_i\})$ from $a_i$ to $b_i$.
Check whether $R_1, R_2, R_3$ form a prism, and if so output it.  If
no triple of paths forms a prism, output that the graph contains no
prism.  If the algorithm outputs a prism, this is obviously a correct
answer: the graph contains a prism.  Suppose conversely that the graph
contains a prism.  Then it contains a smallest prism with triangles
$\{a_1, a_2, a_3\}$ and $\{b_1, b_2, b_3\}$.  At some step, the
algorithm will check this 6-tuple (unless a prism is discovered before,
but then the correctness is proved anyway).  By three applications of
Lemma~\ref{l:kpri}, we see that the three paths $R_1, R_2, R_3$ form a
prism that is output.  All this take time $O(n^8)$.

The following is still open.

\begin{question}
Is there a polynomial time algorithm to decide whether an input graph
has an odd hole? 
\end{question}

\subsection*{Further reading}

In~\cite{chudnovsky.c.l.s.v:reco}, another algorithm for recognizing
Berge graphs is given. It relies on decompositions and uses
Theorem~\ref{th:double}.  Detecting odd holes can be solved in
polynomial time under the assumption that the largest size of a clique
in the input graph is bounded by some constant.  This is proved by
Conforti, Cornu\'ejols, Liu, Vu\v skovi\'c and
Zambelli~\cite{conforti.c.l.v.z:oddHoleBounded} that is a good reading
for understanding the main ideas of the recognition of Berge
graphs.  A combination of cleaning and shortest path detector is used
by Chudnovsky, Seymour and Trotignon~\cite{chudnovsky.s.T:net} to
decide in polynomial time whether a graph contains a subdivision of
the net (the \emph{net} is the graph obtained from a triangle by adding
a pendant edge at each vertex).

\section{Berge trigraphs}
\label{sec:trigraphs}

An obvious question about Theorem~\ref{th:dec} is whether it is best
possible.  Is each outcome necessary?  Are there outcomes that can be
made more precise?  A careful reader of~\cite{chudnovsky.r.s.t:spgt} might
notice that the outcome `homogeneous pair' is obtained
only once in the whole proof, and might therefore wonder whether it is
really necessary.  Chudnovsky proved it is not.  Let us explain how.

A way to proceed is to consider a smallest Berge graph $G$ such that
the homogeneous pair is the only outcome satisfied by $G$ in
Theorem~\ref{th:dec}, and to look for a contradiction.  A natural idea
is then to `contract' a homogeneous pair $(A, B)$ in order to find a
smaller Berge graph $G'$.  Then, apply Theorem~\ref{th:dec} to $G'$,
and prove that any outcome of Theorem~\ref{th:dec} in $G'$ yields an
outcome in $G$ (because $G'$ and $G$ are very similar).  From the
initial assumption, $G'$ therefore satisfies no outcome of
Theorem~\ref{th:dec}, and this provides a contradiction.  We call this
method \emph{bootstrap}, because it improves structural theorems
somehow for free.  The natural way to `contract' a homogeneous pair
$(A, B)$ of $G$ is to replace $A$ (resp.\ $B$) by a vertex $a$ (resp.\
$b$) complete to $C(A)$ (resp.\ $C(B)$) and anticomplete to $\overline{C}(A)$
(resp.\ $\overline{C}(B)$).  The bootstrap method is hard to implement in this
context.  The problem is with $ab$: should it be an edge or an antiedge
of $G'$?  Both choices lead to difficult technicalities: if $ab$ is
chosen to be an antiedge, it could be that that some skew cutset
separates $a$ from $b$ in $G'$, while $A$ and $B$ are linked in $G$.
If $a$ is chosen to be adjacent to $b$, it could be that the same
phenomenon happens in the complement.  No neat example of these bad
phenomena can be given, because as we will see, it is true that the
outcome `homogeneous pair' is not necessary in Theorem~\ref{th:dec}.
But any attempt of proof will face this issue and is likely to fail.

The idea of Chudnovsky is to leave undecided the adjacency between $a$
and $b$ in $G'$.  To this purpose, she defines \emph{trigraphs} as
graphs with edges, antiedges, and a third kind of adjacency:
\emph{switchable pair}.  For every pair of distinct vertices $x$ and
$y$, $xy$ is an edge, an antiedge or a switchable pair.  A
\emph{realization} $G$ of a trigraph $T$ is any graph on $V(T)$ such
that all edges (resp.\ antiedges) of $T$ are edges (resp.\ antiedges)
of $G$ (so, every switchable pair is transformed into an edge or an
antiedge).  A trigraph is \emph{Berge} if every realization is Berge.
We do not give the long list of definitions that translates naturally
the vocabulary of graphs to trigraphs.  The key point is in the
definitions of the decompositions: all the `important' edges in
definitions of decompositions are not allowed to be switchable pairs.
For instance, if $(X, Y)$ is a 2-join of a trigraph $T$, no switchable
pair of $T$ is from $X$ to $Y$.  If $(X, Y)$ is a skew partition of a
trigraph $T$, it must be that $X$ is partitioned into $X_1$ and $X_2$
such that no edge and no switchable pair exists between $X_1$ and
$X_2$, and it must be that $Y$ is partitioned into $Y_1$ and $Y_2$ such
that no antiedge and no switchable pair exists between $Y_1$ and
$Y_2$.  It is easy to guess how useful this requirement is: for
instance, when building $G'$ from $G$ as in the paragraph above, $ab$
is defined to be a switchable pair.  So, the problem that we mentioned
with the skew cutset separating $a$ from $b$ does not exists anymore.

Of course, the slight problem with this notion of trigraph is that the
proof of Theorem~\ref{th:dec} has to be done again from the beginning
for Berge trigraphs.  Chudnovsky proved several decomposition theorems
similar to Theorem~\ref{th:dec} for Berge trigraphs.  The proof of the
main one is self-contained and runs along more than 200 pages.  We do
not give the precise statements here, it would be pointless since we
do not give the precise definitions of decompositions and basic
classes of trigraph.  The precise statements of the theorems are
in~\cite{chudnovsky:trigraphs}, where only parts of the proofs are
given.  The complete proof is in~\cite{chudnovsky:these}.  With
trigraphs, the bootstrap method works smoothly and yields the
following (that we translate back to graphs, since a graph is a
particular trigraph).

\begin{theorem}[Chudnovsky 2003]
\label{th:Mdec}
Every Berge graph is basic, or has a 2-join, a complement 2-join or a
balanced skew partition.
\end{theorem}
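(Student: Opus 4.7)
The plan is to proceed by the \emph{bootstrap} method outlined above, using the fact that Theorem~\ref{th:dec} is already available. Suppose for contradiction that the theorem fails, and let $G$ be a Berge graph of minimum order that is not basic and has no 2-join, no complement 2-join and no balanced skew partition. Applying Theorem~\ref{th:dec} to $G$, the only outcome left is that $G$ admits a homogeneous pair $(A,B)$. The strategy is then to build a strictly smaller Berge object $G'$ by contracting this homogeneous pair, apply the inductive hypothesis (a trigraph analogue of Theorem~\ref{th:dec}) to $G'$, and lift the decomposition of $G'$ back to one of the three desired decompositions of $G$, contradicting the choice of $G$.

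The first key step is the construction of $G'$. Naively one would replace $A$ by a single vertex $a$ with the common external neighbourhood $C(A)$, and $B$ by a single vertex $b$ with external neighbourhood $C(B)$, and then decide whether $ab$ is an edge. As explained in the surrounding text, neither choice is safe: picking $ab$ as a non-edge risks creating a skew partition that separates $a$ and $b$ in $G'$ but is not a skew partition of $G$; picking $ab$ as an edge creates the same pathology in $\overline{G'}$. The remedy is to work in Chudnovsky's category of trigraphs: declare $ab$ to be a \emph{switchable pair}, so that $G'$ is a trigraph whose vertex set is $(V(G)\setminus(A\cup B))\cup\{a,b\}$. Every realization of $G'$ is obtained from a suitable ``reduction'' of $G$ by a single vertex of $A$ and a single vertex of $B$, and this will be enough to verify that $G'$ is a Berge trigraph. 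Since $|A|,|B|\geq 2$, one has $|V(G')|<|V(G)|$, so the inductive hypothesis applies to $G'$.

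The second key step is to invoke the trigraph analogue of Theorem~\ref{th:dec}, stating that every Berge trigraph is basic, or has a 2-join, a complement 2-join, a homogeneous pair, or a balanced skew partition, with the crucial additional requirement that in all these decompositions no switchable pair is among the ``critical'' adjacencies (for a 2-join, no switchable pair crosses between $X_1$ and $X_2$; for a balanced skew partition $(A',B')$, the partitions of $A'$ and $B'$ respect switchable pairs; etc.). Granting this, the switchable pair $ab$ lies entirely inside one side of every decomposition of $G'$, and replacing $a$ by $A$ and $b$ by $B$ (appropriately distributing them) produces a decomposition of $G$ of the same type. The homogeneous pair case of $G'$ either yields a homogeneous pair of $G$ that is strictly larger than $(A,B)$ (possible after an easy iterative argument), or merges with $(A,B)$ so as to exhibit one of the three target decompositions in $G$. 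In all cases we reach a contradiction with the assumption on $G$, completing the proof.

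The main obstacle is clearly not the bootstrap itself, which is reasonably short once set up, but the \emph{trigraph version of Theorem~\ref{th:dec}}. Every structural lemma used in the original proof (the Roussel--Rubio lemma, the attachment analysis for line graphs and prisms, the wheel lemmas, the treatment of antiholes, and so on) must be redone in the trigraph setting, and one must track, at every step, which adjacencies are allowed to be switchable and which are not. The point of the exercise is that the rigidity of trigraph decompositions (forbidding switchable pairs across the cut) is exactly what makes the lift from $G'$ to $G$ work, but achieving this rigidity costs essentially a full re-run of the $\sim 150$ pages of~\cite{chudnovsky.r.s.t:spgt}. That is the heavy lifting; the removal of the homogeneous pair outcome is then a clean corollary.
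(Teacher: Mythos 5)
Your proposal follows essentially the same route as the paper: the survey's own treatment of Theorem~\ref{th:Mdec} is exactly this bootstrap argument --- contract the homogeneous pair, leave the adjacency $ab$ undecided as a switchable pair, invoke a trigraph analogue of Theorem~\ref{th:dec} in which no switchable pair may occur among the critical adjacencies of a decomposition, and lift the decomposition back to $G$ --- with the trigraph decomposition theorem itself deferred to Chudnovsky's 200-plus-page proof, just as you defer it. So your sketch is correct and matches the paper's approach and level of detail.
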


An interesting feature of trigraphs is that additional conditions can
be added.  For instance, a \emph{monogamous} trigraph is a trigraph
such that every vertex is member of at most one switchable pair.
Monogamous trigraphs are very convenient to handle the interactions of
odd 2-joins and homogenous pairs.  When a monogamous trigraph $T$ has
a homogeneous pair $(A, B)$, a smaller trigraph $T'$ can be
constructed as follows: replace $A$ (resp.\ $B$) by a vertex $a$
(resp.\ $b$) strongly complete to $C(A)$ (resp.\ $C(B)$) and strongly
anticomplete to $\overline{C}(A)$ (resp.\ $\overline{C}(B)$) (here
\emph{strongly} means that only real edges are used, not switchable
pairs), and link $a$ to $b$ by a switchable pair.  When a monogamous
trigraph $T$ has an odd 2-join $(X_1,X_2)$ with sets $A_1, B_1, C_1,
A_2, B_2, C_2$ as in the definition, a smaller trigraph $T_2$ can be
constructed as follows: delete $C_1$ replace $A_1$ (resp.\ $B_1$) by a
vertex $a_1$ (resp.\ $b_1$) strongly complete to $A_2$ (resp.\ $B_2$)
and strongly anticomplete to $B_2 \cup C_2$ (resp.\ $A_2\cup C_2$),
and link $a_1$ to $b_1$ by a switchable pair.  When applied
iteratively, this way of constructing blocks of decompositions
preserves the property of being monogamous, and shows that all the
decompositions used in the process do not cross.  A slightly more
general notion of trigraph is defined in~\cite{chudnovskyTTV:noBsp} to
handle interactions between 2-joins, complement 2-joins and
homogeneous pairs.

Another potential improvement of Theorem~\ref{th:dec} is about the
technical requirements in the definition of a 2-join.  Some authors
consider only \emph{non-path 2-joins} (already defined
page~\pageref{pth2join}).  In some applications, it is essential to use
non-path 2-joins, because one needs to replace one side of the 2-join
by a long path to recurse (in proof by induction, or in algorithms),
and this obviously fails if the side is already a long path.
Trotignon~\cite{nicolas:bsp} investigated this question and obtained
the following result (with the bootstrap method for graphs starting from
Theorem~\ref{th:Mdec}).  \emph{Path cobipartite} graphs and \emph{path
double-split graphs} are Berge graphs obtained by subdividing edges in
complement of bipartite graphs and in double split graphs respectively
(a more precise definition can be given, but this one is enough here).
Observe that subdividing an edge in any graph creates a path 2-join.

\begin{theorem}[Trotignon 2008]
  \label{th:n}
  If $G$ is a Berge graph, then $G$ is basic, or one of $G$,
  $\overline{G}$ is a path-cobipartite graph, or one of $G$,
  $\overline{G}$ is a path-double split graph, or one of $G$,
  $\overline{G}$ has a non-path 2-join, or $G$ has a balanced skew
  partition, or one of $G$, $\overline{G}$ has a homogeneous pair and
  a path 2-join.
\end{theorem}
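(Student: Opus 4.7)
The plan is to apply the bootstrap method directly to graphs, starting from Theorem~\ref{th:Mdec}. The core operation will be \emph{path contraction}: given a path 2-join of $G$ whose path side $X_1$ is a long path from $a_1$ to $b_1$ of length $\ell$, replace $X_1$ by the shortest $a_1$-to-$b_1$ path whose length has the same parity as $\ell$. Because in a Berge graph all $A_2$-to-$B_2$ paths through $X_1$ share a common parity, this replacement preserves the Berge property, and when $X_1$ is long enough it yields a strictly smaller Berge graph $G'$. The intention is to apply Theorem~\ref{th:n} inductively to $G'$ and translate each of its outcomes back to an outcome for $G$.

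More concretely, I would argue by induction on $|V(G)|$. Assume $G$ is a minimum Berge counter-example to Theorem~\ref{th:n}. Then $G$ is not basic, has no balanced skew partition, neither $G$ nor $\overline{G}$ is path-cobipartite or path-double-split, no 2-join of $G$ or $\overline{G}$ is non-path, and neither $G$ nor $\overline{G}$ has both a homogeneous pair and a path 2-join. Apply Theorem~\ref{th:Mdec} to $G$: by elimination, up to complementation $G$ has a 2-join, necessarily a path 2-join $(X_1,X_2)$. Contract it to obtain a smaller Berge graph $G'$, then use the inductive hypothesis on $G'$ to get one of the outcomes of Theorem~\ref{th:n}.

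The proof then becomes a case analysis on which outcome $G'$ satisfies. If $G'$ is basic, then $G$, viewed as a subdivision of $G'$ along a single internal path, lies either in the same basic class (the bipartite and line-graph classes and their complements being closed under the relevant subdivisions) or precisely in the path-cobipartite or path-double-split class; in every sub-case we contradict the assumption on $G$. If $G'$ has a non-path 2-join (or the complement version), then either the replacement path lies entirely in one side of this 2-join (so expansion yields a non-path 2-join of $G$, a contradiction) or the two 2-joins cross, in which case the rigidity of crossing 2-joins forces one of them to be non-path in $G$. A balanced skew partition of $G'$ should lift to one of $G$, since the vertices of the replacement path, which are few and confined to a single path, can be re-absorbed into the skew-partition sides while preserving balancedness. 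Finally, if $G'$ has a homogeneous pair together with a path 2-join, then expanding $X_1$ preserves both features in $G$, giving the last forbidden configuration.

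The main obstacle is the interaction of the contraction with the new decomposition exhibited in $G'$, especially when that decomposition uses the replacement path as part of its separator. Two points will require the most care. First, verifying that a balanced skew partition of $G'$ can always be lifted to a balanced skew partition of $G$ is delicate, because the parity conditions defining balancedness depend on paths and antipaths whose lengths change under subdivision; one has to use the matching parity of the replacement to show nothing goes wrong. Second, producing the precise homogeneous-pair-plus-path-2-join outcome (rather than a standalone homogeneous pair) is exactly what forces using Theorem~\ref{th:Mdec} as the bootstrap base: since that theorem has already eliminated standalone homogeneous pairs, any homogeneous pair that appears in $G$ must be witnessed through the contraction process itself, which is what ties it structurally to the path 2-join that was contracted. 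Getting this linkage rigorous is where I expect the bulk of the technical effort.
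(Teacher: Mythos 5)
Your overall plan --- bootstrap from Theorem~\ref{th:Mdec}, shrink the path side of a path 2-join, and lift the outcomes back --- is exactly the method this survey attributes to the original paper (which is only cited here, not reproved), so the strategy is the right one; the trouble is that the two places you defer are not routine technicalities but the actual content, and in one of them your plan as stated would fail. Read literally, your contraction replaces the path side by the shortest path of the same parity, i.e.\ by an edge $a_1b_1$ in the odd case. That operation \emph{decides} the adjacency between $a_1$ and $b_1$, and this is precisely the failure mode described in Section~\ref{sec:trigraphs} for the naive graph bootstrap on homogeneous pairs: a balanced skew partition of $G'$ may use the decided adjacency in an essential way. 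Concretely, a skew cutset $B'=Y_1\cup Y_2$ of $G'$ with $a_1\in Y_1$ and $b_1\in Y_2$ is legitimate in $G'$, but in $G$, where $a_1b_1$ is a non-edge, the completeness between $Y_1$ and $Y_2$ is destroyed and $B'$ need not be skew at all; re-absorbing the interior of the long path (which is indeed harmless, since it only attaches to $a_1$ and $b_1$) does nothing to repair this, because the obstruction sits at the two endpoints, not in the interior. Proving that in every such situation $G$ still has a balanced skew partition (or one of the other outcomes) is the bulk of the real proof, and your sketch asserts it away in one sentence. Note also that your justification of Berge-ness of $G'$ only addresses holes; odd antiholes through the new edge $a_1b_1$ need a separate (short, but necessary) argument using $A_2\cap B_2=\emptyset$.

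If, to dodge the adjacency decision, you instead contract to the standard marker path of length $3$ or $4$, then the induction has a termination gap: \emph{``when $X_1$ is long enough''} is not something a minimum counterexample owes you. It may happen that every 2-join of $G$ and of $\overline{G}$ is a path 2-join whose path side already has length $3$ or $4$; then $G'=G$ and the scheme makes no progress, and this stuck case is exactly where the theorem earns its new outcomes: one must show directly that such a graph is path-cobipartite, path-double split, basic, or has a balanced skew partition or a homogeneous pair. Relatedly, your treatment of the case ``$G'$ basic'' leans on the claim that the basic classes are closed under the relevant even subdivisions; this is true for bipartite graphs, needs an actual argument in the root graph for line graphs of bipartite graphs (the subdivided edge must come from a degree-two vertex, which is where the hypothesis $A_2\cap B_2=\emptyset$ enters), fails for the complement classes (which is precisely why path-cobipartite and path-double split graphs must be introduced), and requires separate care for the small self-complementary graphs such as $L(K_{3,3})$ and $L(K_{3,3}\setminus e)$ that are basic in several ways. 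So under either reading of the contraction, one of the two pillars of your induction (lifting balanced skew partitions, or strict decrease) is missing, and filling it is where the 50-odd pages of the cited proof go.
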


Observe that this theorem shows that homogeneous pairs are not
necessary to decompose Berge graphs, that path 2-joins are not necessary to
decompose Berge graphs, but does not show that one can get rid of both
outcomes.  We now give examples showing that every outcome of
Theorem~\ref{th:n} is needed.  In Fig.~\ref{fig:contrex1}
and~\ref{fig:contrex2}, a path cobipartite graph and a path
double-split graph are represented.  They both have a path 2-join,
showing that to get rid of path 2-joins, the two new basic classes are
really needed.  The graph represented in Fig.~\ref{fig:contrex3} is
interesting because it is decomposable only by a path 2-join or by a
homogeneous pair, showing that it is impossible to get rid of both
outcomes.  Obtaining graphs uniquely decomposable by a 2-join (we mean
that that they are not basic and that the $2$-join is the only way to
decompose them) is easy by the following recipe.  Consider a Berge
graph $G_i$ ($i=1, 2$) that contains a path $P_i$ of length~3 from
$a_i$ to $b_i$ and such that $A_i=N(a_i)$ and $B_i = N(b_i)$ are
disjoint.  Now take the disjoint union of $G_1$ and $G_2$, and add all
possible edges between $A_1$ and $A_2$ and between $B_1$ and $B_2$.
The resulting graph obviously has a 2-join, and if $G_1$ and $G_2$ are
sufficiently general, this is the only decomposition.  This recipe can
be applied to the graphs from Fig.~\ref{fig:contrex1}
and~\ref{fig:contrex2} for instance.  On Fig.~\ref{wkbgsf}, a Berge
graph uniquely decomposable by a balanced skew partition is
represented.

\begin{figure}[p]
  \begin{center}
    \includegraphics{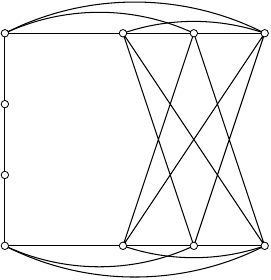}
    \caption{A path-cobipartite graph\label{fig:contrex1}}
  \end{center}
\end{figure}

\begin{figure}[p]
  \begin{center}
    \includegraphics{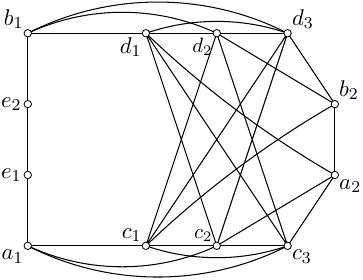}
    \caption{A path-double split graph\label{fig:contrex2}}
  \end{center}
\end{figure}

\begin{figure}[p]
  \begin{center}
    \includegraphics{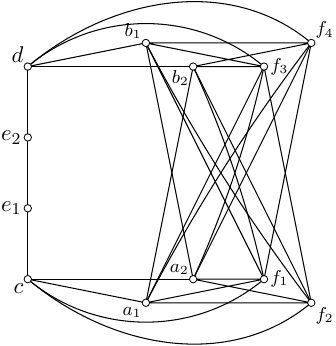}
    \caption{A graph that has a homogeneous pair $(\{a_1, a_2\},$
      $\{b_1, b_2\})$ and a path 2-join\label{fig:contrex3}}
  \end{center}
\end{figure}

Not much work has been devoted to the algorithmic aspects of trigraphs.
The following is still open.

\begin{question}
  What is the complexity of recognizing Berge trigraphs?
\end{question}

\subsection*{Further reading}

It seems now that trigraphs are an important general tool in
structural graph theory, as suggested by their use in the study of
claw-free graphs (see Chudnovsky and
Seymour~\cite{chudnovsky.seymour:ClawFree}) and bull-free graphs (see
Chudnovsky~\cite{chudnovsky12}).  About algorithms for Berge
trigraphs, Chudnovsky, Trotignon, Trunck and Vu\v
skovi\'c~\cite{chudnovskyTTV:noBsp} seems to be the only available
reference.

\section{Even pairs: a shorter proof of the SPGT}

An \emph{even pair} in a graph is a pair $\{x,y\}$ of vertices such
that every path between them has even length.  This
notion is involved in algorithmic aspects of perfect graphs and allows
to significantly shorten the proof of the SPGT. 
Meyniel~\cite{meyniel:87} proved the following. 

\begin{theorem}[Meyniel 1987]
  \label{th:meyniel}
  A minimally imperfect graph has no even pair.
\end{theorem}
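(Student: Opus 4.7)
The plan is to contract the even pair and derive a numerical contradiction from the size formula $|V(H)|=\alpha(H)\omega(H)+1$ for minimally imperfect graphs, a classical fact from the theory of partitionable graphs (see~\cite{preissmann.sebo:minimal}). Let $G'=G/xy$ be obtained by identifying $x$ and $y$ into a single vertex $z$ whose neighbourhood is $N_G(x)\cup N_G(y)$. I will show that $G'$ is itself minimally imperfect; together with $|V(G')|=|V(G)|-1$ and $\omega(G')=\omega(G)$, applying the formula to both $G$ and $G'$ forces $(\alpha(G)-\alpha(G'))\omega(G)=1$, which is impossible since a minimally imperfect graph contains an edge and hence $\omega(G)\ge 2$.

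Verifying $\omega(G')=\omega(G)$ is straightforward: as $\{x,y\}$ is an even pair the vertices $x$ and $y$ are non-adjacent, so any maximum clique of $G$ meets at most one of them and projects to a clique of $G'$ of the same size by replacing that vertex with $z$; conversely a clique of $G'$ containing $z$ lifts back to one in $G$ by replacing $z$ with $x$. The substantive identity $\chi(G')=\chi(G)$ is where the even pair hypothesis enters, via a Kempe-chain argument. Given any optimal colouring $c$ of $G$ with $c(x)=a\ne b=c(y)$, consider the subgraph $H_{ab}$ induced by the colour classes $a$ and $b$, and let $C$ be its connected component containing $x$. If $y\notin C$, swapping the two colours on $C$ yields an optimal colouring of $G$ with $c(x)=c(y)$, which descends to $G'$ and gives $\chi(G')\le\chi(G)$. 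If instead $y\in C$, a shortest $x$--$y$ path in $H_{ab}$ alternates colours $a,b$ and hence has odd length; moreover it is chordless in $G$, because same-colour chords are forbidden and any different-colour chord would lie in $H_{ab}$ and shorten the path. Such a chordless odd $x$--$y$ path contradicts the even pair hypothesis, so the swap is always possible. Combined with the trivial reverse inequality, $\chi(G')=\chi(G)$.

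Together these give $\chi(G')=\omega(G)+1>\omega(G)=\omega(G')$, so $G'$ is imperfect; to reach minimal imperfection I still need every proper induced subgraph of $G'$ to be perfect. One not containing $z$ is an induced subgraph of $G\setminus\{x,y\}$, hence perfect by minimality of $G$. One containing $z$ has the form $H/xy$ for a proper induced subgraph $H$ of $G$ with $\{x,y\}\subseteq V(H)$, and $\{x,y\}$ is still an even pair in $H$ because induced paths in $H$ are induced in $G$; this $H$ is itself perfect. At this point I would invoke the companion Fonlupt--Uhry preservation principle that contracting an even pair in a perfect graph yields a perfect graph, applied to $H$. I would prove that lemma in parallel by induction on $|V(H)|$, reusing verbatim the $\omega$-invariance and Kempe-swap arguments above and analysing proper induced subgraphs of $H/xy$ by the same dichotomy. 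The main obstacle throughout is the Kempe-swap step, which is the only place where the even pair hypothesis is genuinely used; once it is in place, the remainder is bookkeeping plus the one-line arithmetic contradiction sketched in the first paragraph.
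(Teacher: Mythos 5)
The paper states Meyniel's theorem without proof (it only cites Meyniel and, immediately afterwards, Fonlupt and Uhry), so there is no in-paper argument to compare with; what you propose is the standard proof from the literature: contract the even pair, show that $G'=G/xy$ is again minimally imperfect, and contradict the identity $|V(H)|=\alpha(H)\omega(H)+1$ for minimally imperfect graphs. Your Kempe-chain step is correct and is indeed the heart of the matter (the shortest $x$--$y$ path in the two colour classes is induced in $G$ and of odd length), the descent and lift of colourings across the contraction are fine, the treatment of proper induced subgraphs of $G'$ through the Fonlupt--Uhry lemma proved by your parallel induction works, and the final arithmetic $(\alpha(G)-\alpha(G'))\,\omega(G)=1$ with $\omega(G)\ge 2$ is a legitimate finish; invoking $|V|=\alpha\omega+1$ is acceptable since it follows from Lov\'asz's characterization (recalled in the further reading of Section~2) together with the trivial bound $|V(G)|-1\le\alpha(G)\omega(G)$ obtained by colouring $G-v$.

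One step is wrong as written, though easily repaired: you call the identity $\omega(G')=\omega(G)$ ``straightforward'' and claim a clique of $G'$ containing $z$ ``lifts back to one in $G$ by replacing $z$ with $x$'', adding that the even-pair hypothesis enters only in $\chi(G')=\chi(G)$. That is not so: without the even-pair hypothesis, contracting two non-adjacent vertices can raise the clique number (contract the two ends of an induced path $x\,u\,v\,y$: the image $\{z,u,v\}$ is a triangle while $\omega=2$), so the lift-back is exactly where the hypothesis is used a second time. The repair is short: if $K'$ is a clique of $G'$ with $z\in K'$ and $K'\setminus\{z\}$ is contained neither in $N_G(x)$ nor in $N_G(y)$, choose $v\in K'\cap\bigl(N_G(x)\setminus N_G(y)\bigr)$ and $u\in K'\cap\bigl(N_G(y)\setminus N_G(x)\bigr)$; then $x\,v\,u\,y$ is a chordless path of length $3$, contradicting the even pair. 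Hence $K'\setminus\{z\}\subseteq N_G(x)$ or $K'\setminus\{z\}\subseteq N_G(y)$, and $z$ is replaced by $x$ or by $y$ accordingly (not always by $x$). The same correction must be made inside your induction proving the Fonlupt--Uhry lemma, where you reuse this step verbatim; with it in place, the proof is complete.
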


Therefore, even pairs could be an ingredient of a
useful decomposition theorem for Berge graphs.
Given two vertices $x,y$ in a graph $G$, the operation of
\emph{contracting} them means removing $x$ and $y$ and adding one
vertex with edges to every vertex of $G\setminus \{x,y\}$ that is
adjacent in $G$ to at least one of $x,y$; we denote by $G/xy$ the
graph that results from this operation.  Fonlupt and Uhry
\cite{fonlupt.uhry:82} proved that \emph{if $G$ is a perfect graph and
  $\{x,y\}$ is an even pair in $G$, then the graph $G/xy$ is perfect
  and has the same chromatic number as $G$}.  In particular, given a
$\chi(G/xy)$-colouring $c$ of the vertices of $G/xy$, one can easily
obtain a $\chi(G)$-colouring of the vertices of $G$ as follows: keep
the colour for every vertex different from $x,y$; assign to $x$ and $y$
the colour assigned by $c$ to the contracted vertex.  This idea could
be the basis for a conceptually simple colouring algorithm for Berge
graphs: as long as the graph has an even pair, contract any such pair;
when there is no even pair find a colouring $c$ of the contracted graph
and, applying the procedure above repeatedly, derive from $c$ a
colouring of the original graph.  The algorithm for recognizing Berge
graphs mentioned at the end of the preceding paragraph can be used to
detect an even pair in a Berge graph $G$; indeed, it is easy to see
that two non-adjacent vertices $a,b$ form an even pair in $G$ if and
only if the graph obtained by adding a vertex adjacent only to $a$ and
$b$ is Berge.  Thus, given a Berge graph $G$, one can try to colour its
vertices by keeping contracting even pairs until none can be found.
Then some questions arise: what are the Berge graphs with no even
pair?  What are, on the contrary, the graphs for which a sequence of
even-pair contractions leads to graphs that are trivially easy to
colour?

Bertschi~\cite{bertschi:90} proposed the following definitions.  A
graph $G$ is \emph{even contractile} if either $G$ is a clique or
there exists a sequence $G_0, \ldots, G_k$ of graphs such that
$G=G_0$, for $i=0, \ldots, k-1$ the graph $G_i$ has an even pair
$\{x_i, y_i\}$ such that $G_{i+1}=G_i/x_iy_i$, and $G_k$ is a clique.
A graph $G$ is \emph{perfectly contractile} if every induced subgraph
of $G$ is even contractile.

A simple observation is that odd holes and antiholes of length at
least~5 have no even pairs.  Also some Berge graphs have no even
pairs, such as $\overline{C_6}$, $L(K_{3, 3}) \setminus e$, and more
generally sufficiently connected line graphs of bipartite graphs (see
Hougardy~\cite{hougardy:95} for more about that), and every even
antihole of length at least~6.  In fact every known example of a
Berge graph with no even pair either contains an odd prism or an
antihole.  Odd prisms different from $\overline{C_6}$ have even pair,
but they are not perfectly contractile (any attempt to contract them
leads to $\overline{C_6}$).  This justifies the following definitions.
\emph{Grenoble graphs} are these graphs with no odd holes, no antihole
of length at least 5 and no odd prism.  \emph{Artemis graphs} are
these graphs with no odd holes, no antiholes of length at least 5 and
no prisms.

\begin{figure}
  \center
  \includegraphics{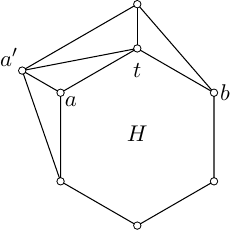}
  \caption{An artemis graph\label{artemis.fig.sansc4}}
\end{figure}

To prove the existence of an even pair in an Artemis graph, an idea is to
consider a shortest hole $H$, and two vertices $u, v$ at distance~2
along $H$, with a common neighbor $t \in V(H)$.  As shown in
Fig.~\ref{artemis.fig.sansc4}, $\{a,b\}$ may fail to be an even pair
because of $a'$ which is the second vertex of a $P_4$ from $a$ to~$b$.
However, one may complain that $H$ is not the `best' hole, the hole
$H'$ obtained by replacing $a$ by $a'$ is better, because $\{a', b\}$
is an even pair of $G$.  In this sense, $a'$ is `better' than $a$.
In the square-free case, Linhares and
Maffray~\cite{linhares.maffray:evenpairsansc4} showed that the
neighborhood of $t$ contains two cliques $A$ and $B$.  On $A$ (and
also on $B$), there exists an order (corresponding to the idea of a
vertex better than another one).  A maximal element in $A$ and a
maximal element in $B$ forms an even pair of $G$.  This method was
extended by Maffray and Trotignon~\cite{nicolas:artemis} to general
Artemis graphs by using the Roussel--Rubio lemma.

\begin{theorem}[Maffray and Trotignon 2005]
  \label{th:artemis}
  Every Artemis graph is perfectly contractile. 
\end{theorem}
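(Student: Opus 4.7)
The plan is to argue by induction on $|V(G)|$. Since every induced subgraph of an Artemis graph is Artemis, it suffices to show that every non-clique Artemis graph $G$ has an even pair $\{x,y\}$ such that $G/xy$ is again Artemis; the inductive hypothesis then produces the required contraction sequence for $G/xy$ and for every proper induced subgraph of $G$, and the base case of cliques is trivial. If $G$ is chordal, I would fall back on the classical fact that chordal graphs are perfectly contractile. So I would assume $G$ contains a hole, which is necessarily even, pick a shortest hole $H$, and fix a vertex $t\in V(H)$ with its two neighbors $a_0, b_0$ along $H$.

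The pair $\{a_0,b_0\}$ is a natural candidate for an even pair: they are non-adjacent with common neighbor $t$, so the shortest path between them has even length~$2$. If $\{a_0,b_0\}$ fails to be an even pair, the minimality of $H$ and the absence of odd holes force the obstructing odd path between $a_0$ and $b_0$ to be a $P_4$ of the form $a_0\,a'\,b'\,b_0$ with $a'$ in the `$a$-side' and $b'$ in the `$b$-side' of $N(t)$; any longer odd path would combine with $t$ or with $H$ to yield a prism, an odd hole, or a shorter hole than $H$, each forbidden. Following the idea of Linhares and Maffray, I would equip the $a$-side of $N(t)$ with the relation $a\preceq a'$ meaning that $a'$ plays the role of the replacement for $a$ in such a $P_4$, show it is a partial order, and pick a maximal element $\tilde a$; symmetrically I would pick a maximal $\tilde b$ on the $b$-side. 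The claim would then be that $\{\tilde a,\tilde b\}$ is an even pair, precisely because neither element can be improved.

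The main obstacle is that in the general Artemis setting the two sides of $N(t)$ need not be cliques, which is exactly the situation Linhares--Maffray cannot handle directly. Here I would invoke Lemma~\ref{l:w}: applied to an anticonnected subset $T$ of a side of $N(t)$ and to any path $P$ whose ends are $T$-complete, the four outcomes of Roussel--Rubio are available, and outcomes \ref{oleap} and \ref{ohop} respectively produce a prism and an antihole of length at least~$5$, both forbidden in an Artemis graph. Only outcomes \ref{oeven} and \ref{oodd} survive, which is exactly the parity control that makes the partial-order argument go through as in the square-free case.

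Finally, I must verify that the contracted graph $G/\tilde a\tilde b$ is still Artemis. Perfection (hence absence of odd holes and odd antiholes) is preserved by the theorem of Fonlupt and Uhry, so only the creation of a prism or of an even antihole of length at least~$6$ needs to be ruled out; this would be done by pulling the hypothetical configuration back into $G$, splitting the contracted vertex into the pair $\{\tilde a,\tilde b\}$, and producing via one further application of Lemma~\ref{l:w} either a prism or an odd antihole in $G$, contradicting the Artemis hypothesis or the maximality of $\tilde a,\tilde b$. I expect this last step to be the most delicate technical part, since controlling the creation of new forbidden induced subgraphs under even-pair contraction is typically where such arguments become involved.
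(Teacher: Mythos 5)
Your plan follows the route that the survey itself only sketches (the survey does not prove this theorem; it describes the Linhares--Maffray method and cites~\cite{nicolas:artemis} for the actual argument): shortest hole $H$, a vertex $t$ with its two neighbours on $H$ as candidate pair, a ``better vertex'' order, maximal elements, the Roussel--Rubio lemma in place of square-freeness, and an induction requiring the contracted graph to stay Artemis. Your use of Lemma~\ref{l:w} is correct and is indeed the key leverage: in an Artemis graph outcome~\ref{oleap} yields a prism, and outcome~\ref{ohop} yields an antihole of length at least~$6$ (close the odd antipath into a cycle of $\overline{G}$ through the two ends of the length-$3$ path, which are $T$-complete and nonadjacent), so only the parity outcomes survive.

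There is, however, a genuine gap at the step where you claim that if $\{a_0,b_0\}$ is not an even pair, then minimality of $H$ forces the obstruction to be a $P_4$ of the form $a_0a'b'b_0$. This is false. Take the $4$-hole $a_0tb_0s$, add a chordless path $a_0p_1p_2p_3p_4b_0$ of length~$5$, and join $p_1$ to both $t$ and $s$. Every hole of this graph has length $4$ or $6$; its only triangles are $a_0tp_1$ and $a_0sp_1$, which share an edge, so it contains no prism; it has no antihole of length at least~$5$ (a $C_5$ would be an odd hole, and an antihole of length at least~$6$ requires at least six vertices of degree at least~$3$, while only $a_0,t,b_0,s,p_1$ have degree at least~$3$). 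So the graph is Artemis, $a_0tb_0s$ is a shortest hole, and $\{a_0,b_0\}$ fails to be an even pair because of the length-$5$ path, yet no chordless path of length~$3$ joins $a_0$ and $b_0$. Hence the order you define via $P_4$-replacements does not see all obstructions: here $a_0$ and $b_0$ are both ``maximal'' in your sense and still do not form an even pair (whereas, say, $\{p_1,b_0\}$ does). So the heart of the matter --- defining the right extremality notion, proving that a suitably extremal pair is an even pair (for arbitrary odd obstructing paths, not just $P_4$'s), and proving that contracting that particular pair preserves the Artemis property --- is exactly what remains to be done; these claims constitute essentially the whole of the nineteen-page proof in~\cite{nicolas:artemis} and are asserted rather than established in your plan.
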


This theorem can be transformed into a colouring
algorithm~\cite{nicolas:fastartemis} that generalizes several well
known algorithms for colouring classes of perfect graphs, such as
Meyniel graphs and weakly chordal graphs.  It yields a combinatorial
polynomial time colouring algorithm for perfectly orderable graphs
(none was known before).

\begin{theorem}[L\'ev\^eque, Maffray, Reed and Trotignon 2009]
  \label{th:artemis}
  There exits an algorithm that colours every Artemis graph in time
  $O(n^2m)$. 
\end{theorem}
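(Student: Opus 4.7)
The plan is to turn the constructive proof of Theorem~\ref{th:artemis} (that every Artemis graph is perfectly contractile) into an efficient algorithm, combined with the Fonlupt--Uhry reduction, which says that if $\{x,y\}$ is an even pair in a perfect graph $G$ then $\chi(G/xy)=\chi(G)$ and any colouring of $G/xy$ lifts to one of $G$ by giving $x$ and $y$ the colour of their contracted image. The overall scheme is then: while the current Artemis graph is not a clique, find an even pair $\{x,y\}$ and replace $G$ by $G/xy$; once a clique is reached, colour it with $\omega(G)$ distinct colours and uncontract. Since each contraction removes one vertex, the scheme performs at most $n-1$ iterations.

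The heart of the algorithm is a subroutine that, given a non-complete Artemis graph $H$, produces an even pair of $H$ in time $O(nm)$. The naive option — testing each non-adjacent pair $\{a,b\}$ by adding a common neighbour and running the $O(n^9)$ Berge recognition algorithm of Chudnovsky, Cornu\'ejols, Liu, Seymour and Vu\v skovi\'c — is far too slow. Instead, I would directly implement the structural construction underlying Theorem~\ref{th:artemis}: compute a shortest hole $C$, select a 2-path $u$-$t$-$v$ of $C$, identify inside $N(t)$ the two cliques $A$ and $B$ whose existence is guaranteed by the Linhares--Maffray argument in the square-free case and by the Maffray--Trotignon extension using the Roussel--Rubio lemma, and define on each of $A$ and $B$ the natural ``better than'' partial order (where a vertex $a'$ beats $a$ if $a'$ is the second vertex of a $P_4$ joining $a$ to a candidate on the other side). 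A maximal element of $A$ and a maximal element of $B$ then form the desired even pair.

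Each primitive in this subroutine is within reach of $O(nm)$: a shortest hole can be found by BFS-style searches from every vertex; the cliques $A$, $B$ and the relevant anticonnected neighbourhoods can be isolated by scanning $N(t)$; and the partial orders can be assembled by one round of $P_4$-exploration per candidate. Plugging this into the outer loop gives total time $O(n)\cdot O(nm) = O(n^2 m)$, which is the announced bound (the edge count cannot grow by more than a constant factor under contractions taken over the whole execution).

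The main obstacle is on two fronts. First, one must check that the Roussel--Rubio-based selection of the ``good'' vertices in $A$ and $B$ can indeed be carried out in $O(nm)$, rather than degrading to an anticonnectivity test per candidate, which would blow up the budget; this is the delicate implementation point. Second, and more conceptually, one must establish a closure lemma stating that contracting the specific even pair produced by the subroutine keeps the graph inside the Artemis class — no odd hole, no antihole of length at least~5, and no prism is created by the identification. Without this, the iterative scheme cannot reuse Theorem~\ref{th:artemis} on the reduced graph. Once these two points are settled, correctness follows from Theorem~\ref{th:artemis} and Fonlupt--Uhry, and the complexity bound is immediate from the iteration count.
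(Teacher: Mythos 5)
Your overall scheme---iterated even-pair contraction with the Fonlupt--Uhry lifting, ending at a clique, with an $O(nm)$ even-pair subroutine run at most $n$ times---is indeed the general strategy behind the cited algorithm of L\'ev\^eque, Maffray, Reed and Trotignon (the survey itself states this theorem without proof, so all the substance must come from your two ``obstacles'', which you leave open; they are not implementation details but the entire mathematical content of the result). The more serious of the two is the closure issue. Contracting an even pair of a perfect graph preserves perfection, but it need \emph{not} preserve the Artemis class: for instance, split a degree-three vertex of a prism into two non-adjacent vertices, one seeing the two triangle neighbours and the other seeing the third neighbour; the resulting graph is Artemis and that pair is even, yet contracting it recreates the prism. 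So ``contract any even pair returned by the subroutine'' does not allow you to reuse the Artemis structure on the reduced graph; what is actually needed (and what the cited papers prove) is that a \emph{specially chosen} even pair exists whose contraction is again Artemis, and your proposal neither states nor proves such a preservation lemma for the pair your subroutine would output.

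The second gap is the $O(nm)$ bound for the subroutine itself. The claim that ``a shortest hole can be found by BFS-style searches from every vertex'' is incorrect: BFS yields shortest cycles, and in a graph with triangles a shortest cycle is not a hole, so no $O(nm)$ shortest-hole routine is provided (and none is obvious). Likewise, extracting the cliques $A$ and $B$ in $N(t)$, building the ``better than'' orders, and certifying that maximal elements form an even pair are exactly the places where the Roussel--Rubio machinery enters the existence proof, and you give no argument that these steps can be executed within $O(nm)$ rather than degenerating into a parity or anticonnectivity test per candidate pair. Until both the preservation lemma and the $O(nm)$ subroutine are actually established, neither correctness nor the $O(n^2m)$ running time follows; as written, the proposal restates the difficulties of the theorem rather than resolving them. (Your outer-loop accounting is fine: a contraction decreases $n$ and does not increase $m$, so $n$ iterations at $O(nm)$ each would indeed give $O(n^2m)$.)
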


The proof of the next theorem from~\cite{chudnovsky.seymour:even} uses
some technics of the proof of Theorem~\ref{th:artemis}, is very
readable, and as we will see saves about 50 pages in the proof of the
SPGT.  An \emph{odd wheel} $(C,T)$ in a graph $G$ consists of a hole
$C$ of length at least six, and a nonempty anticonnected subset $T
\subseteq V(G) \setminus V(C)$, such that at least three vertices of
$C$ are $T$-complete, and there is a path $P$ of $C$ with odd length
at least~3, such that its ends are not $T$-complete and all its
internal vertices are $T$-complete.  A \emph{long prism} is a prism
such that at least one the paths has length at least~2.  Let us say
that $G$ is impoverished if $G$ is Berge, and $G$ and $\overline{G}$
both contain no odd wheel, long prism or double diamond.  A
\emph{dominant pair} in $G$ is a pair $(x, y)$ of nonadjacent vertices
such that every other vertex of $G$ is adjacent to at least one of $x,
y$.

\begin{theorem}[Chudnovsky and Seymour, 2009]
  \label{th:pmep}
  If $G$ is impoverished, then either $G$ admits a star cutset or an
  even pair or a dominant pair, or $G$ is a complete graph.
\end{theorem}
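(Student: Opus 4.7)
The plan is to argue by contradiction: assume $G$ is impoverished, admits no star cutset, has no even pair and no dominant pair, and is not complete, and deduce that one of the three forbidden substructures (odd wheel, long prism, double diamond) must appear in $G$ or in $\overline{G}$. Equivalently, I want to show that the hypothesis ``impoverished plus no star cutset plus non-complete'' already forces either an even pair or a dominant pair.

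First, suppose for contradiction $G$ is not complete; pick nonadjacent $a,b \in V(G)$. Since $\{a,b\}$ is not an even pair, there is an odd path from $a$ to $b$, and since $(a,b)$ is not a dominant pair, some vertex is anticomplete to $\{a,b\}$. I would fix $a,b$ so that a shortest odd $ab$-path $P = a\,p_1\,p_2\cdots p_{2k}\,b$ has minimum length over all nonadjacent pairs, and then study the set $T$ of vertices anticomplete to $\{a,b\}$, together with the pattern of attachments of $T$ on $P$.

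Second, I would run a case analysis driven by the excluded configurations. If $P$ has length $3$, then $\{a,p_1,p_2,b\}$ is a $P_4$; its interaction with $T$ is tightly controlled by the Roussel--Rubio lemma (Lemma~\ref{l:w}) applied to anticonnected subsets of $T$ in $\overline{G}$: outcomes~\ref{oleap} and~\ref{ohop} either produce a leap (hence a $3$-prism, which in the presence of further structure upgrades to a long prism or double diamond, both forbidden) or produce an odd antipath that can be completed to an odd antihole or odd wheel in $\overline{G}$. If $P$ is longer, the minimality of its length, combined with the absence of a star cutset centered at any $p_i$, lets me propagate attachments of $T$ along $P$, and the absence of odd wheels and long prisms in both $G$ and $\overline{G}$ forces the resulting picture to collapse. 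In each branch, the conclusion is that either $\{a,b\}$ was in fact an even pair, or some vertex in $N(a)\cup N(b)$ centers a star cutset separating $a$ from $b$, or some nonadjacent pair strictly ``better'' than $(a,b)$ violates one of the hypotheses, contradicting the minimal choice.

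The main obstacle, as in the Artemis analogue (Theorem~\ref{th:artemis}), is to set up the right ``better vertex'' ordering: here there is no shortest hole to anchor the argument, only a shortest odd path, so it is delicate to guarantee that swapping $a$ or $b$ for a neighbor along $P$ or replacing them using $T$ produces a strictly better nonadjacent pair without introducing new obstructions. I expect the cleanest way to handle this is to maintain a pair $(a,b)$ that is extremal in a suitable simultaneous sense (minimizing the length of a shortest odd $ab$-path, then minimizing some secondary parameter such as the size of the common nonneighborhood), and to use the Roussel--Rubio lemma repeatedly to convert any would-be star cutset obstruction into a forbidden odd wheel in $G$ or in $\overline{G}$. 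The self-complementary nature of ``impoverished'' is essential here, since odd-antihole--based obstructions have to be ruled out symmetrically with odd-hole--based ones.
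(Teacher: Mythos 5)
The survey does not actually prove Theorem~\ref{th:pmep}: it is quoted from Chudnovsky and Seymour's paper on even pairs in Berge graphs, where the argument takes roughly eight pages. Measured against that, what you have written is a plan rather than a proof, and the gap is the whole middle of the argument. Every place where the real work happens is an assertion: ``the absence of odd wheels and long prisms in both $G$ and $\overline{G}$ forces the resulting picture to collapse'' and ``in each branch, the conclusion is that either \dots'' are precisely the statements that need to be established, and nothing in your sketch derives them. You also concede yourself that the extremal mechanism is not in place (``I expect the cleanest way to handle this is \dots''); but choosing the right extremal pair, proving that the proposed replacements of $a$ or $b$ yield a strictly better nonadjacent pair, and showing the process terminates without violating the hypotheses is exactly the content of the theorem, not a detail to be filled in later. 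A proof outline whose hard steps are all deferred in this way cannot be credited as a proof of a statement of this depth.

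Two concrete soft spots illustrate why the deferral matters. First, a leap in the Roussel--Rubio lemma produces a prism, but impoverished graphs exclude only \emph{long} prisms: the prism with all three paths of length~1, namely $\overline{C_6}$, is compatible with all your hypotheses, has no even pair and no star cutset, and satisfies the conclusion only through the dominant-pair outcome. Your phrase ``upgrades to a long prism or double diamond, both forbidden'' skips over exactly this borderline configuration, which is the reason the dominant pair appears in the statement at all; any correct proof must confront it explicitly. Second, the intended use of Lemma~\ref{l:w} ``applied to anticonnected subsets of $T$ in $\overline{G}$'' is not set up coherently: in the lemma the ends of the path must be $T$-complete, whereas your $a$ and $b$ are anticomplete to $T$ in $G$, so the application has to take place in the complement, where the relevant object is an antipath of $G$ rather than the path $P$ you fixed; as written, the outcomes \ref{oleap} and \ref{ohop} cannot be invoked for $P$ and $T$ directly. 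These are not fatal to the overall strategy --- working from a shortest odd path between a nonadjacent pair and converting obstructions into forbidden wheels, long prisms or double diamonds is indeed in the spirit of the Chudnovsky--Seymour argument and of the Artemis proof it borrows from --- but in its present form the proposal contains the statement of a strategy, not a proof.
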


It is not very difficult to prove that a minimally imperfect graph has
no dominant pair (see~\cite{chudnovsky.seymour:even}), so it follows
easily by Theorem~\ref{th:meyniel} and Lemma~\ref{th:sc} that every
impoverished graph is perfect.  Since the last 55 pages of the proof
of the SPGT are devoted to finding a skew partition in an impoverished
graph, they are no longer necessary and can be replaced by the 8
pages needed to prove Theorem~\ref{th:pmep}.

A neat generalization of Theorem~\ref{th:artemis} is the following
conjecture (see~\cite{everett.f.l.m.p.r:ep}).

\begin{conjecture}[Everett and Reed]
  Every Grenoble graph is perfectly contractile.
\end{conjecture}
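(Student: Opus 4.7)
The plan is to extend the argument of Maffray and Trotignon (Theorem~\ref{th:artemis}) from Artemis graphs to Grenoble graphs. Since the only difference between the two classes is that Grenoble graphs admit even prisms while Artemis graphs do not, the added work is to control the interaction of even prisms with even-pair contractions. The global strategy is to prove, by induction on the number of vertices, the strengthening that every Grenoble graph either is a clique or has an even pair $\{x,y\}$ such that $G/xy$ is still Grenoble; this immediately gives even contractibility, and closure of the class under induced subgraphs then yields perfect contractibility.

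For the first half --- producing an even pair --- I would follow Maffray--Trotignon. Pick a shortest hole $H$ (if there is none the graph is chordal and the conclusion is classical), and consider two vertices $u,v$ at distance~$2$ on $H$ sharing a neighbour $t$. Using Corollary~\ref{c:rr2} of the Roussel--Rubio lemma, which applies since Grenoble graphs are Berge, structure $N(t)$ into cliques $A$ and $B$, define a ``better'' ordering on each as in the Artemis case, and take maximal elements as candidates for an even pair. The new obstruction to rule out, compared to the Artemis setting, is the existence of an induced odd path between these candidates running through an even prism; one would show no such path can exist by a further application of Lemma~\ref{l:w} together with the absence of odd holes, odd prisms and antiholes of length at least~$5$.

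For the second half --- preserving the class by contraction --- the Fonlupt--Uhry theorem ensures that $G/xy$ is perfect, hence Berge by the SPGT, so antiholes of length at least~$5$ cannot appear in $G/xy$ without yielding an odd hole or antihole beforehand. Consequently the only thing that can really go wrong is that contracting $\{x,y\}$ creates an \emph{odd} prism in $G/xy$. This reduces to a local check: show that among the family of even pairs produced in the first half one can always choose one so that no new odd prism appears around the contracted vertex.

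The main obstacle will be precisely this last point, the interaction of even pairs with even prisms. Contracting $\{x,y\}$ when $x$ and $y$ sit close to some even prism can merge two of its paths and shift their parities, or can fuse with vertices outside the prism in a way that builds a brand new odd prism. Ruling this out requires a structural description of how even prisms can sit inside a Grenoble graph near a candidate even pair, which does not follow from the Artemis proof and which seems to need Lemma~\ref{l:w} applied to anticonnected sets built out of the prism itself. A possible alternative route, less direct but perhaps more robust, is to appeal to Theorem~\ref{th:Mdec} and treat Grenoble graphs decomposition by decomposition; then one has to show that $2$-joins, complement $2$-joins and balanced skew partitions all cooperate with even-pair contractions, which appears to be a task of comparable difficulty.
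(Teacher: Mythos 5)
The statement you are trying to prove is presented in the paper as an \emph{open conjecture} (Everett and Reed); the paper contains no proof of it, and none is known. Your text is accordingly not a proof but a research plan, and the places where you yourself flag ``the main obstacle'' are exactly the difficulties that have kept the conjecture open since the Artemis case was settled. So the verdict is: genuine gap, in fact the whole substance of the statement is left unproved.

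Two of your reductions are also concretely flawed. First, the extension of the Maffray--Trotignon argument to the first half is not a matter of ruling out ``one new obstruction'': the Artemis proof uses the absence of \emph{all} prisms in an essential way, because outcome~\ref{oleap} of Lemma~\ref{l:w} produces a leap, hence a prism, which in an Artemis graph is an immediate contradiction. In a Grenoble graph that prism may be even and perfectly legal, so the Roussel--Rubio lemma no longer shuts down the leap case, and the ordering argument on the cliques $A$ and $B$ in $N(t)$ breaks at its core rather than at a peripheral step. Second, your class-preservation step is too optimistic: Fonlupt--Uhry plus the SPGT only guarantees that $G/xy$ is Berge, which excludes odd holes and odd antiholes. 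Grenoble graphs additionally forbid \emph{all} antiholes of length at least~5, in particular even antiholes such as $\overline{C_8}$, which are Berge; contraction can create these just as well as it can create an odd prism, so ``the only thing that can really go wrong is an odd prism'' is false, and even for the odd-prism case you give no mechanism for selecting an even pair whose contraction avoids it. Until those steps are supplied, nothing beyond the already known Artemis theorem (Theorem~\ref{th:artemis}) has been established.
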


To prove perfection for a class $\cal C$, a statement of the
following form is enough: every graph in $\cal C$ has an even pair, or
its complement has an even pair.  Note that this does not hold for all
Berge graphs as shown by $L(K_{3, 3}\setminus e)$ (see
Fig.~\ref{fig:spo}).  It seems that the only known theorem of this
form is the following from~\cite{figuereido.m.p:bullfree}.  The
\emph{bull} is the graph obtained from the triangle by adding two
pending edges at different vertices.

\begin{theorem}[de Figueiredo, Maffray and Porto, 1997]
  \label{th:bullep}
  If $G$ is a bull-free Berge graph with at least two vertices then at
  least one of $G$ or $\overline{G}$ has an even pair.
\end{theorem}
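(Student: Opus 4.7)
The plan is to proceed by induction on $|V(G)|$, exploiting the self-complementary nature of both the hypothesis (bull-free Berge) and the conclusion (an even pair in $G$ or $\overline{G}$). If $G$ is disconnected, any two vertices in different components form an even pair of $G$ vacuously, since no induced path joins them; by complementation the same holds if $\overline{G}$ is disconnected. So the first reduction is to assume both $G$ and $\overline{G}$ are connected.

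The second reduction handles non-trivial homogeneous sets. Suppose $H\subseteq V(G)$ with $1<|H|<|V(G)|$ is homogeneous. I would contract $H$ to a single vertex $h$ with the same external neighborhood, obtaining $G'$. Lifting any bull or odd hole/antihole of $G'$ that uses $h$ to one of $G$ (by replacing $h$ with any $x\in H$) shows that $G'$ is again bull-free and Berge. The induction hypothesis then yields an even pair of $G'$ or of $\overline{G'}$. To lift such a pair back, I would use the observation that any induced path of $G$ starting inside $H$ must leave $H$ at its first step and never return, since any external neighbour of a vertex of $H$ is complete to all of $H$. This sets up a length-preserving correspondence between induced paths in $G$ joining the relevant vertices and those in $G'$, so the even pair transports.

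The heart of the argument is the prime case, where $G$ and $\overline{G}$ are both connected and $G$ has no non-trivial homogeneous set. The key ingredient here is the attachment lemma: if $P=p_0p_1p_2p_3$ is an induced $P_4$ and $v\notin V(P)$, then bull-freeness forbids $N(v)\cap V(P)=\{p_1,p_2\}$, and the complementary statement holds in $\overline{G}$ by the self-complementarity of the bull. I would then pick an extremal witness, for instance the endpoints $x,y$ of a longest induced path in $G$, or of a longest induced antipath in $G$, and argue that every induced $x$--$y$ path in the appropriate graph has the same parity as the chosen one: if a second induced path of different parity existed, combining the two and applying the attachment lemma along shared chords would force either an induced bull (contradicting bull-freeness) or an odd hole or antihole (contradicting the Berge hypothesis). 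Choosing between working in $G$ or in $\overline{G}$, based on the parity of the longer of the two extremal witnesses, then yields the even pair.

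The main obstacle is precisely this prime-case parity-fixing step: controlling the interaction between two induced $x$--$y$ paths (or antipaths) that traverse the graph in complicated ways, and ruling out configurations that avoid both forbidden substructures. The work will likely hinge on choosing the witness path to be not merely longest but minimal with respect to a refined invariant, for example a lexicographic order on attachment patterns, so that iterated applications of the attachment lemma collapse the analysis to a small number of base configurations that are eliminated directly from Berge-ness.
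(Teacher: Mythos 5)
Your two reductions are fine but they are the easy part: vertices in distinct components do form an even pair, and contracting a homogeneous set $H$ is legitimate (the contracted graph is isomorphic to the induced subgraph obtained by keeping one vertex of $H$, and a relevant induced path meets $H$ in at most one vertex, so the path-length correspondence you describe does hold). The genuine gap is the prime case, and there your text is a plan, not a proof: the ``parity-fixing'' step is exactly the content of the theorem, and you yourself defer it to an unspecified ``refined invariant''. Worse, the one concrete device you propose already carries no weight on the smallest nontrivial example: the prism $\overline{C_6}$ is bull-free, Berge, connected, co-connected and has no homogeneous set; the ends of any longest induced path in it (a $P_4$ such as $a_1a_2b_2b_3$) are joined both by that odd path and by an even path through the third vertex of the triangle, and in fact $\overline{C_6}$ has no even pair at all -- the even pair lives only in the complement $C_6$. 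So no extremal choice of a path in $G$ can be ``parity-fixed'' there; everything rests on your unargued rule for deciding when to switch to $\overline{G}$, and bull-freeness plus the attachment lemma (which only forbids a vertex seeing exactly the two middle vertices of a $P_4$) gives no mechanism for controlling two crossing induced $x$--$y$ paths of different parities.

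Note also that the survey states this theorem without proof, citing de Figueiredo, Maffray and Porto; the published proof is a full structural analysis building on the Chv\'atal--Sbihi theory of bull-free Berge graphs, where the decomposition that matters beyond homogeneous sets is the \emph{homogeneous pair} (indeed that is the paper in which homogeneous pairs were invented). Your contraction argument does not extend to homogeneous pairs: when $(A,B)$ is contracted to two vertices $a,b$, the adjacency between $a$ and $b$ is ambiguous and induced-path lengths are not preserved -- precisely the difficulty that motivates trigraphs in Section~\ref{sec:trigraphs}. So a correct proof along your lines would still need either a treatment of homogeneous pairs compatible with even pairs, or a direct description of the prime case (the ``elementary'' graphs), and neither is present. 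As it stands, the proposal proves only the trivial reductions and leaves the theorem itself unestablished.
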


A graph $G$ is \emph{bipartisan} if it is Berge and contains no
$L(K_{3, 3} \setminus e)$, no double diamond, and none of $G,
\overline{G}$ contains a long prism (a prism is \emph{long} when at
least one of three paths of the prism has length at least~2).  Observe
that every bull-free Berge graph is bipartisan.  An intermediate
result of~\cite{chudnovsky.r.s.t:spgt} is that a bipartisan graph is
bipartite, complement of bipartite, or has a balanced skew partition.
A direct proof of the following conjecture (published
in~\cite{nicolas:cliques} and~\cite{chudnovsky.seymour:even}) would
generalize Theorem~\ref{th:bullep} and could shorten the proof of the
SPGT.

\begin{conjecture}[Maffray, 2002]
If $G$ is  bipartisan  with at least two vertices, then one of $G$,
$\overline{G}$ contains  an even pair.
\end{conjecture}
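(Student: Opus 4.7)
The plan is to use the intermediate structural result of Chudnovsky, Robertson, Seymour and Thomas quoted just before the conjecture: every bipartisan graph is bipartite, the complement of a bipartite graph, or admits a balanced skew partition. If $G$ is bipartite with $|V(G)| \geq 2$, then one finds two nonadjacent vertices on the same side of the bipartition, and all paths between them in $G$ have even length; the degenerate subcases (at most one vertex on some side) reduce to $G$ or $\overline{G}$ having two nonadjacent vertices, which trivially form an even pair. The complement-of-bipartite case is symmetric. So we are reduced to showing that a bipartisan graph with a balanced skew partition has an even pair in $G$ or in $\overline{G}$.

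Fix such a balanced skew partition $(A,B)$, with $A = A_1 \cup A_2$ having no edges between the two pieces and $B = B_1 \cup B_2$ with all edges between them. Since $(B,A)$ is a balanced skew partition of $\overline{G}$, the situation is symmetric under complementation, and one may aim for an even pair of $G$ using some pair $x \in A_1$, $y \in A_2$. Such $x,y$ are nonadjacent, and any $x$-$y$ path $P$ decomposes as an alternation of maximal $A$-subpaths (each contained in a single $A_i$, since $A_1$ is anticomplete to $A_2$) with maximal $B$-subpaths. The balanced condition on $(A,B)$ says exactly that every subpath of $P$ of length at least~$3$ with both ends in $B$ and interior in $A$ has even length, which controls the parity contribution of the $A$-subpaths; the fact that $B_1$ is complete to $B_2$, combined with the chordlessness of $P$, imposes a strong restriction on the $B$-subpaths. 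I would try to choose $x,y$ with additional mild constraints (for example, a common neighbour in $B$, or extremal status with respect to an auxiliary order on $A_1$ and $A_2$) to eliminate the corner cases in which $x$ or $y$ is isolated from $B$.

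To turn this into a complete argument I would proceed by induction on $|V(G)|$. The induced subgraphs $G_i = G[B \cup A_i]$ inherit Bergeness and the bipartisan forbidden induced subgraphs, so each is a smaller bipartisan graph (whenever $A_{3-i}$ is nonempty), and by induction admits an even pair in $G_i$ or in $\overline{G_i}$. The Roussel--Rubio lemma is a natural tool for the path-analysis step, since anticonnected sets inside $A_2$ attached to a candidate $x$-$y$ path lying in $B \cup A_1$ can be treated as a single vertex for parity purposes, and the absence of pyramids (since $G$ is Berge) together with the absence of the bipartisan obstructions (long prism, double diamond, $L(K_{3,3}\setminus e)$) can then be used to exclude odd $x$-$y$ paths. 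The hard part, and the reason the conjecture has remained open, is the lifting step: an even pair of $G_1$ is not automatically an even pair of $G$, because new $x$-$y$ paths routed through $A_2$ (respectively antipaths through $B_2$ in the complement) may have odd length. A successful proof likely requires either a sufficiently canonical choice of the pair that is preserved when $A_2$ is added back, or a strengthening of the inductive hypothesis to track some auxiliary parity data; framing everything in the trigraph language of Section~\ref{sec:trigraphs} may be needed to prune interactions between the skew partition and putative 2-joins cleanly.
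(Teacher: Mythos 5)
The statement you are attempting is presented in the paper as an \emph{open} conjecture (Maffray, 2002); the paper contains no proof of it, so there is no argument of the paper to compare yours to, and your proposal does not close the question either. The easy part of your plan is sound: by the intermediate result of~\cite{chudnovsky.r.s.t:spgt} quoted just before the conjecture, a bipartisan graph is bipartite, complement of bipartite, or has a balanced skew partition, and in the first two cases two vertices on the same side of the (anti)bipartition give the even pair, the degenerate subcases being trivial. But the entire content of the conjecture is the third case, and there you concede the decisive step yourself: an even pair of $G[B\cup A_1]$ (or of its complement) need not remain an even pair of $G$, because restoring $A_2$ may create odd chordless paths between the chosen vertices (respectively, odd antipaths through $B_2$ in $\overline{G}$). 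You do not exhibit a canonical choice of pair, an auxiliary order on $A_1$ and $A_2$, or a strengthened induction hypothesis that survives this lifting; the appeal to the Roussel--Rubio lemma and to the excluded configurations is only a statement of intent (``can then be used to exclude odd $x$-$y$ paths'') with no concrete mechanism. So the argument stops exactly where the known difficulty begins, which is why the statement is still a conjecture.

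Two further cautions. First, the presence of a balanced skew partition alone cannot be expected to yield the even pair by a formal path-parity analysis of $(A_1,A_2,B_1,B_2)$: the WBGKSF of Fig.~\ref{wkbgsf} is a perfect graph whose only outcome in Theorem~\ref{th:dec} is the balanced skew partition and whose complement has \emph{no} even pair at all, and the closely related conjecture of Thomas on graphs uniquely decomposable by balanced skew partitions is likewise open. Any successful treatment must therefore use the bipartisan exclusions (no long prism in $G$ or $\overline{G}$, no double diamond, no $L(K_{3,3}\setminus e)$) in an essential, quantitative way, much as the proof of Theorem~\ref{th:bullep} uses bull-freeness, or as Theorem~\ref{th:artemis} uses the absence of prisms and long antiholes; your sketch does not indicate how. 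Second, a small local slip: two nonadjacent vertices do not ``trivially form an even pair'' in general (there may be an odd induced path between them); this is harmless in your degenerate two-vertex subcases but should not be stated as a general principle. As written, your text is a reasonable research plan, not a proof.
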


\subsection*{Further reading}

A good survey on even pairs is Everett, de~Figueiredo, Linhares~Sales,
Maffray, Porto and Reed~\cite{everett.f.l.m.p.r:ep}.  It seems that
even pairs is a conjecture-cornucopia: other conjectures on even pairs
can be found in Burlet, Maffray and Trotignon~\cite{nicolas:cliques}
or in L\'ev\^eque and de Werra~\cite{levequeW12}.  Very short proofs
of the existence of even pairs in classical classes of perfect graphs
(Meyniel graphs and weakly chordal graphs) can be found in Trotignon
and Vu\v skovi\'c~\cite{nicolas.kristina:RR}.

\section{Colouring perfect graphs}
\label{sec:col}

In the 1980s, Gr\"ostchel, Lov{\'a}sz and
Schrijver~\cite{grostchel.l.s:color} devised a polynomial time
algorithm that colours any input perfect graph.  This algorithm relies
on the ellipsoid method, and one may wonder whether a more
combinatorial algorithm exists.  Even if there is no formal definition
of what a \emph{combinatorial} algorithm should be, most
mathematicians agree that an algorithm that relies on graphs searches
and decompositions, and even on classical linear programming, can be
called `combinatorial' and that the ellipsoid method cannot.  So
this question is considered open.  A more precise question is
whether a fast colouring algorithm can be derived from
Theorem~\ref{th:dec}.  We investigate here recent progress in this
direction.

We start with a combinatorial polynomial time colouring algorithm for perfect
graphs, due to Gr\"otschel, Lov\'asz and
Schrijver~\cite{grostchel.l.s:color}, under the assumption that a
subroutine for computing a maximum weighted stable set is available.
Here weights are not essential, because they can be simulated
by replications, but we use them for convenience and because some
subclasses of perfect graphs are not closed under replication.
We suppose that ${\cal C}$ is a subclass of perfect graphs, and that
there is an $O(n^k)$ algorithm ${\cal A}$ that computes a maximum
weighted stable set and a maximum weighted clique for any input graph
in ${\cal C}$ (so $\cal C$ needs not be closed under taking
complement).  Observe that we do not assume that $\cal C$ is closed
under replication or even under taking induced subgraphs.  In what follows,
$n$ denotes the number of vertices of the graph under consideration.

\begin{lemma}
  \label{l:compS}
  There is an algorithm with the following specification:
  \begin{description}
  \item[Input: ] A graph $G$ in ${\cal C}$, and a sequence
    $K_1, \dots, K_t$ of maximum cliques of $G$ where $t\leq n$.
  \item[Output: ] A stable set of $G$ that intersects each $K_i$,
    $i=1, \dots, t$. 
  \item[Running time: ] ${O}(n^k)$
  \end{description} 
\end{lemma}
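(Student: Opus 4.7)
The plan is to reduce the problem to a single invocation of the subroutine $\mathcal{A}$ on $G$ equipped with a suitable weight function, avoiding any recursive structure or modification of the graph (which is important since $\mathcal{C}$ is not assumed closed under replication or induced subgraphs). Define, for each vertex $v \in V(G)$,
\[
w(v) = |\{ i \in \{1, \dots, t\} : v \in K_i \}|,
\]
which can be computed in time $O(nt) = O(n^2)$ by scanning the list $K_1, \dots, K_t$. Then run $\mathcal{A}$ with input $(G, w)$ to obtain a maximum weighted stable set $S^*$, and output $S^*$.

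For correctness, any stable set $S$ of $G$ satisfies
\[
w(S) = \sum_{v \in S} |\{ i : v \in K_i \}| = \sum_{i=1}^{t} |S \cap K_i| \leq t,
\]
since a stable set meets a clique in at most one vertex, and equality $w(S) = t$ holds if and only if $S$ meets every $K_i$. On the other hand, $G$ is perfect and the $K_i$ are maximum cliques of $G$, so Lemma~\ref{l:simpleC}\ref{i:f3} applied to $G$ yields a stable set $T$ that meets every maximum clique of $G$, and in particular every $K_i$. Thus $w(T) = t$, so the maximum weighted stable set $S^*$ returned by $\mathcal{A}$ also satisfies $w(S^*) = t$, and therefore meets each $K_i$.

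The running time is dominated by the single call to $\mathcal{A}$, giving $O(n^k)$ overall. The only subtlety worth checking is that the weights stay polynomially bounded (since $w(v) \leq t \leq n$), so the assumed running time of $\mathcal{A}$ applies. The main conceptual point, which is the sole non-trivial observation, is to realise that the two seemingly independent demands ``is a stable set'' and ``hits every $K_i$'' can be packaged into a single weighted maximum-stable-set instance whose optimum value $t$ is automatically certified by perfection; no iteration, cleaning, or decomposition is needed.
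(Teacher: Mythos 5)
Your algorithm is exactly the one in the paper: give each vertex the weight $y_v=|\{i : v\in K_i\}|$, make a single call to $\mathcal{A}$, and output the maximum weighted stable set; and your counting observation that any stable set has weight at most $t$, with equality precisely when it meets every $K_i$, is also how the paper finishes. Where you genuinely differ is in certifying that the optimum value is $t$. The paper argues indirectly: it forms the auxiliary graph $G'$ in which each vertex $v$ is replaced by a stable set of size $y_v$, notes that $V(G')$ partitions into $t$ cliques of size $\omega(G)=\omega(G')$, and uses perfection of $\overline{G'}$ (via Theorem~\ref{th:pgt} and the replication idea of Lemma~\ref{l:rep}) to conclude that a maximum stable set of $G'$, equivalently a maximum weighted stable set of $G$, has value $t$. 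You instead apply Lemma~\ref{l:simpleC}~\ref{i:f3} directly to the perfect graph $G$: it has a stable set meeting all maximum cliques, hence all the $K_i$ (which are maximum cliques by hypothesis), so a weight-$t$ stable set exists and the set returned by $\mathcal{A}$ must have weight exactly $t$ and meet every $K_i$. Your route is shorter and avoids both the substitution construction and the perfect graph theorem, and it is correct as written; the paper's detour essentially buys another illustration of the replication technique rather than anything logically necessary here. Your side remarks (computing the weights in $O(nt)$ time, weights bounded by $n$ so the stated running time of $\mathcal{A}$ applies) are also fine.
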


\begin{proof}
  By $\omega(G)$ we mean here the maximum \emph{cardinality} of a
  clique in~$G$.  Give to each vertex $v$ the weight $y_v= |\{ i; v
  \in K_i \}|$.  Note that this weight is possibly zero.  With
  Algorithm ${\cal A}$, compute a maximum weighted stable set $S$ of~$G$.

  Let us consider the graph $G'$ obtained from $G$ by replicating
  $y_v$ times each vertex $v$.  So each vertex $v$ in $G$ becomes a
  stable set $Y_v$ of size $y_v$ in $G'$ and between two such stable sets
  $Y_u$, $Y_v$ there are all possible edges if $uv\in E(G)$ and
  no edges otherwise.  Note that vertices of weight zero in $G$
  are not in $V(G')$.  Note also that $G'$ may fail to be in ${\cal C}$,
  but it is easily seen to be perfect.  By replicating $y_v$ times
  each vertex $v$ of $S$, we obtain a stable set $S'$ of $G'$ of
  maximum cardinality.

  By construction, $V(G')$ can be partitioned into $t$ cliques of size
  $\omega (G)$ that form an optimal colouring of $\overline{G'}$
  because $\omega(G') = \omega(G)$.  Since by Theorem~\ref{th:pgt}
  $\overline{G'}$ is perfect, $|S'|=t$.  So, in $G$, $S$ intersects
  every $K_i$, $i \in \{ 1, \ldots ,t\}$.
\end{proof}

\begin{theorem}[Gr{\"o}stchel, Lov{\'a}sz and Schrijver 1988]
  \label{th:color}
  There exists an algorithm of complexity $O(n^{k+2})$ whose input is
  a graph from ${\cal C}$ and whose output is an optimal colouring of $G$.
\end{theorem}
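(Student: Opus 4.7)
The plan is to combine Lemma~\ref{l:simpleC} with Lemma~\ref{l:compS}. By characterisation~\ref{i:f3} an optimal colouring can be extracted one colour class at a time: at each round, produce a stable set $S$ meeting every maximum clique of the current uncoloured subgraph of $G$, paint $S$ with a fresh colour, and iterate. Since each such maximum clique loses exactly one vertex to $S$, its clique number drops by one per round, so the outer loop terminates after at most $\omega(G)\le n$ rounds. Because $\mathcal{C}$ is not assumed closed under induced subgraphs, I would always invoke $\mathcal{A}$ on $G$ itself with $0/1$-weights that zero out already-coloured vertices rather than passing to an induced subgraph; this is exactly why $\mathcal{A}$ is required to accept weights.

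To produce the required stable set in a given round, I would imitate the proof of Lemma~\ref{l:compS} and build a list of maximum cliques $K_1,K_2,\dots$ of the current uncoloured subgraph on the fly. Start with any such clique, returned by a weighted call to $\mathcal{A}$. Having built $K_1,\dots,K_i$, use Lemma~\ref{l:compS} to obtain a stable set $S_i$ meeting each $K_j$ in exactly one vertex, then use $\mathcal{A}$ once more to test whether a clique of the current maximum size survives outside $S_i$: if yes, add it as $K_{i+1}$, which is automatically disjoint from $S_i$, and loop; if no, $S_i$ already meets every maximum clique of the current subgraph and is returned.

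The main obstacle is to bound the number $t$ of iterations of this inner loop by $n$, which is exactly the hypothesis that Lemma~\ref{l:compS} imposes. I would prove it by a rank argument. Set $S_0=\emptyset$ and form the $t\times t$ matrix $N$ with $N_{ij}=\langle\mathbf{1}_{S_j}-\mathbf{1}_{S_{j-1}},\mathbf{1}_{K_i}\rangle$. For $j>i$ both $|S_j\cap K_i|$ and $|S_{j-1}\cap K_i|$ equal $1$, so $N_{ij}=0$; for $j=i$ the first term is $1$ while $|S_{i-1}\cap K_i|=0$ by construction, giving $N_{ii}=1$. Hence $N$ is lower triangular with unit diagonal and $\det N=1$. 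Since $N$ factors as the product of the $t\times n$ matrix whose rows are the $\mathbf{1}_{K_i}$ and the $n\times t$ matrix whose columns are the $\mathbf{1}_{S_j}-\mathbf{1}_{S_{j-1}}$, its rank is at most $\min(t,n)$, forcing $t\le n$. With this bound in hand, each round of the inner loop makes $O(n)$ calls to Lemma~\ref{l:compS} and to $\mathcal{A}$, each of cost $O(n^k)$; thus each round of the outer loop costs $O(n^{k+1})$, and the total cost is $O(n^{k+2})$, as claimed.
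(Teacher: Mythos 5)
Your proposal is correct and follows essentially the same route as the paper: the same outer loop that extracts a stable set meeting all maximum cliques of the current (weighted) graph via characterization~\ref{i:f3} of Lemma~\ref{l:simpleC}, the same inner loop that grows a list of maximum cliques by calling Lemma~\ref{l:compS} and then testing with $\mathcal{A}$ whether a maximum clique survives outside the returned stable set, and the same $O(n)\times O(n)\times O(n^k)$ accounting that gives $O(n^{k+2})$. The one place where you genuinely diverge is the proof that the inner loop stops after at most $n$ iterations. The paper shows directly that the incidence vectors of $K_1,\dots,K_t$ stay linearly independent: if $K_{t+1}=\sum_i\lambda_iK_i$, multiplying by the incidence vector of the current stable set gives $\sum_i\lambda_i\neq 1$, while multiplying by $\mathbf{1}$ and using that all the cliques have the same cardinality $\omega(G)$ gives $\sum_i\lambda_i=1$. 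You instead form the $t\times t$ matrix $N$ of inner products of the clique indicators with the successive differences $\mathbf{1}_{S_j}-\mathbf{1}_{S_{j-1}}$, observe it is unitriangular (using only $|S_j\cap K_i|=1$ for $i\le j$ and $K_i\cap S_{i-1}=\emptyset$), and read off $t\le n$ from the factorization through an $n$-dimensional space. Both arguments are sound; yours is marginally more self-contained in that it never uses that the cliques all have size $\omega$, only the disjointness and unit-intersection properties guaranteed by the construction, whereas the paper's version is the classical Gr\"otschel--Lov\'asz--Schrijver dependency contradiction. Either way the bound $t\le n$, and hence the stated complexity, follows.
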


\begin{proof}
  As in the proof of Lemma~\ref{l:simpleC}, we only need to show how
  to find a stable set $S$ intersecting all maximum cliques of $G$,
  since we can apply recursion to $G \setminus S$ (by giving weight~0
  to vertices of $S$).  Start with $t=0$. At each iteration, we have a
  list of $t$ maximum cliques $K_1, \ldots, K_t$ and we compute by the
  algorithm in Lemma~\ref{l:compS} a stable set $S$ that intersects
  every $K_i$, $i \in \{ 1, \ldots ,t \}$.  If $\omega (G \setminus S)
  < \omega (G)$ then $S$ intersects every maximum clique, otherwise we
  can compute a maximum clique $K_{t+1}$ of $G \setminus S$ (by giving
  weight~0 to vertices of~$S$).  This will eventually find the desired
  stable set, the only problem being the number of iterations.  We
  show that this number is bounded by $n$.

  Let $M_t$ be the incidence matrix of the cliques $K_1, \dots, K_t$.
  So the columns of $M_t$ correspond to the vertices of $G$ and each
  row is a clique (we see $K_i$ as row vector).  We prove by induction
  that the rows of $M_t$ are independent.  So, we assume that the rows
  of $M_t$ are independent and prove that this holds again for $M_{t+1}$.

  The incidence vector $x$ of $S$ is a solution to $M_tx = \mathbf{1}$
  but not to $M_{t+1}x = \mathbf{1}$.  If the rows of $M_{t+1}$
  are not independent, we have $K_{t+1} = \lambda_1 K_1 + \cdots +
  \lambda_t K_t$.  Multiplying by $x$, we obtain $K_{t+1}x = \lambda_1
  + \cdots + \lambda_t \neq 1$.  Multiplying by $\mathbf{1}$, we
  obtain $\omega = K_{t+1}\mathbf{1} = \lambda_1 \omega + \cdots +
  \lambda_t \omega$, so $\lambda_1 + \cdots + \lambda_t = 1$, a
  contradiction.

  So the matrices $M_1, M_2, \dots$ cannot have more than $n$
  rows.  Hence, there are at most $|V(G)|$ iterations.
\end{proof}

By Theorem~\ref{th:color}, we know that finding a maximum weighted
stable set is enough to solve the colouring problem.  Our question now
is whether Theorem~\ref{th:dec} (or one of its variants from
Section~\ref{sec:trigraphs}) helps to find a maximum weighted stable
set.  It turns out that the question is easy for all basic classes
(for the historical ones, see~\cite{schrijver:opticomb} and for
doubled graphs, see~\cite{chudnovskyTTV:noBsp}).  Also homogeneous pairs
and complement 2-joins are not very hard to handle
(see~\cite{chudnovskyTTV:noBsp}).

For 2-joins, the situation is more complicated because a maximum
stable set may overlap a 2-join in many ways.  To illustrate the
problem, we start with an NP-hardness result.  We define a class
${\cal C}'$ of graphs for which computing a maximum stable set is
NP-hard.  The interesting feature of ${\cal C}'$ is that all graphs in
${\cal C}'$ are decomposable along 2-joins into one bipartite graph
and several gem-wheels where a \emph{gem-wheel} is any graph made of
an induced cycle of length at least 5 together with a vertex adjacent
to exactly four consecutive vertices of the cycle.  Note that a
gem-wheel is a line graph (of a cycle with one chord) and that
computing a maximum weighted stable set in line graph
$G=L(R)$ means computing a maximum weighted matching in $R$, which can
be done by Edmonds's algorithm~\cite{edmonds:ptf}.  Therefore, the
NP-completeness result below shows that being able to decompose along
 2-joins into `easy' graphs is not enough in general to
compute stables sets.

A \emph{flat} path in a graph is path whose internal vertices have
degree~2 (in the graph).  \emph{Extending} a flat path $P = p_1\dots
p_k$ of a graph means deleting the interior vertices of $P$ and adding
three vertices $x, y, z$ and the following edges: $p_1x$, $xy$,
$yp_k$, $zp_1$, $zx$, $zy$, $zp_k$.  By extending a graph $G$ we mean
extending all paths of $\cal M$ where $\cal M$ is a set of
vertex-disjoint flat paths of length at least~3 of $G$.  Class ${\cal
  C}'$ is the class of all graphs obtained by extending 2-connected
bipartite graphs.  From the definition, it is clear that all graphs of
${\cal C}'$ are decomposable along non-path 2-joins.  One leaf of a
decomposition tree will be the underlying bipartite graph.  All the
others leaves will be gem-wheels.

We call \emph{4-subdivision} any graph $G$ obtained from a graph $H$
by subdividing every edge four times.  More precisely, every edge $uv$
of $H$ is replaced by an induced path $u a b c d v$ where $a, b, c, d$
are of degree two.  It is easy to see that $\alpha(G) = \alpha(H) +
2|E(H)|$. This construction, essentially due to
Poljak~\cite{poljak:74}, yields the next result observed by
Naves (see~\cite{nicolas.kristina:2-join}):

\begin{theorem}[Naves, Trotignon and Vu\v skovi\'c 2012]
  \label{th:npHard}
  The problem whose instance is a graph $G$ from $\cal C'$ and an
  integer $k$, and whose question is `Does $G$ contain a stable set
  of size at least~$k$' is NP-complete.
\end{theorem}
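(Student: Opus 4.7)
The plan is to reduce from the maximum independent set problem, which is NP-complete on $2$-connected graphs. Given such an input $H$ together with an integer $k$, I will construct in polynomial time a graph $G\in {\cal C}'$ such that $\alpha(G)\geq k+2|E(H)|$ if and only if $\alpha(H)\geq k$. Membership in NP is clear, so this yields NP-completeness.

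First I would build a $2$-connected bipartite graph $B$ by replacing every edge $uv$ of $H$ with an induced path $u\,p_1^{uv}\,p_2^{uv}\,p_3^{uv}\,p_4^{uv}\,p_5^{uv}\,v$ of length $6$ (i.e.\ subdividing each edge five times). Every cycle of $H$ of length $\ell$ becomes a cycle of $B$ of length $6\ell$, hence $B$ is bipartite, and $2$-connectedness is preserved under subdivisions. For every edge $e=uv$ of $H$, the subpath $P^e=p_1^e p_2^e p_3^e p_4^e p_5^e$ has length $4\geq 3$ and is flat (its interior vertices $p_2^e,p_3^e,p_4^e$ have degree $2$ in $B$), and the paths $P^e$ are pairwise vertex-disjoint because the shared endpoints of subdivided edges (namely the vertices of $V(H)$) belong to no $P^e$. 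Extending all these paths simultaneously yields a graph $G\in{\cal C}'$ by definition.

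Next I would prove $\alpha(G)=\alpha(H)+2|E(H)|$ by a local analysis of the gadget produced by extending a single $P^e$. This gadget, attached to the rest of $G$ only through the two edges $up_1^e$ and $vp_5^e$, consists of the five vertices $p_1^e,x_e,y_e,p_5^e,z_e$, where its internal structure is the path $p_1^e x_e y_e p_5^e$ together with an apex $z_e$ adjacent to all four path vertices. A case analysis on whether $u,v$ belong to a stable set $S$ of $G$ shows that the maximum number $f(u,v)$ of gadget vertices that can be added to $S$ equals $1$ when both $u,v\in S$ and $2$ otherwise; in particular, placing $z_e$ into $S$ never beats the best two-vertex configuration along $p_1^e x_e y_e p_5^e$. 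Writing $S_H=S\cap V(H)$, this yields
\[
|S|=|S_H|+\sum_{e=uv\in E(H)}f(u,v)=|S_H|+2|E(H)|-|E(H[S_H])|.
\]
The elementary identity $\max_{T\subseteq V(H)}(|T|-|E(H[T])|)=\alpha(H)$ (for any $T$ containing an edge $uv$, removing $v$ decreases $|T|$ by one and $|E(H[T])|$ by at least one) then gives $\alpha(G)=\alpha(H)+2|E(H)|$, as required.

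The main obstacle is the gadget calculation above, and especially verifying that the apex $z_e$ provides no advantage. The choice of parameters is essentially tight: an even-length subdivision would leave $B$ non-bipartite (odd cycles of $H$ would survive), while shorter odd subdivisions would either produce flat paths of length less than $3$ (and thus not eligible for extension) or force the chosen flat paths to share endpoints in $V(H)$. Length $6$ is the smallest subdivision providing simultaneously a $2$-connected bipartite underlying graph and a vertex-disjoint family of extendable flat paths.
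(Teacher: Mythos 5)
Your proposal is correct and takes essentially the same approach as the paper: subdivide every edge of $H$ five times and extend the resulting flat $P_5$'s, obtaining a graph of ${\cal C}'$ with $\alpha(G)=\alpha(H)+2|E(H)|$. The only differences are presentational --- you verify this identity by a direct gadget count (the paper instead argues that apex vertices can be avoided and quotes Poljak's $4$-subdivision fact), and you explicitly reduce from 2-connected instances, a point the paper leaves implicit even though the definition of ${\cal C}'$ requires the underlying bipartite graph to be 2-connected.
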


\begin{proof}
  Let $H$ be any graph.  First we subdivide 5 times every edge of~$H$.
  So each edge $ab$ is replaced by $P_7 = a  p_1  \dots  p_5
   b$.  The graph $H'$ obtained is bipartite.  Now we build an
  extension $G$ of $H'$ by replacing all the $P_5$'s $p_1  \dots
   p_5$ arising from the subdivisions in the previous step by
  $P_4$'s.  And for each $P_4$ we add a new vertex complete to it and
  we call \emph{apex vertices} all these new vertices.  The graph $G$
  that we obtain is in $\cal C$.  It is easy to see that there exists
  a maximum stable set of $G$ that contain no apex vertex because an
  apex vertex of a maximum stable set can be replaced by one vertex
  of its neighborhood.  So, we call $G'$ the graph obtained from $G$
  by deleting all the apex vertices and see that $\alpha(G') =
  \alpha(G)$.  Also, $G'$ is the 4-subdivision arising from $H$.  So
  from the remark above, maximum stable sets in $H$ and $G$ have sizes
  that differ by $2|E(H)|$.
\end{proof}

We now explain how 2-joins can in fact help to find stable sets
\emph{in Berge graphs}. 
If $(X, Y)$ is a $2$-join of a graph $G$ then let $X_1=X$, $X_2=Y$ and let
$A_1,$ $B_1,$ $C_1,$ $A_2,$ $B_2,$ $C_2$ be as in the definition of a 2-join.
We define $\alpha_{AC} = \alpha(G[{A_1 \cup C_1}])$, $\alpha_{BC} =
\alpha(G[B_1 \cup C_1])$, $\alpha_{C}= \alpha(G[C_1])$ and $\alpha_{X}
= \alpha(G[X_1])$.  Let $w$ be the weight function on $V(G)$.  When $H$
is an induced subgraph of $G$, or a subset of $V(G)$, $w(H)$
denotes the sum of the weights of vertices in $H$.  The following simple
lemma  describes the situation.

\begin{lemma}
  \label{l:4cases}
  Let $S$ be a maximum weighted strong stable set of $G$. Then exactly
  one of the following holds:

  \begin{enumerate}
  \item\label{i:4c1} $S \cap A_1 \neq \emptyset$, $S \cap B_1 =
    \emptyset$, $S\cap X_1$ is a maximum weighted strong stable set of
    $G[A_1 \cup C_1]$ and $w(S \cap X_1) = \alpha_{AC}$;
  \item\label{i:4c2} $S \cap A_1 = \emptyset$, $S \cap B_1 \neq
    \emptyset$, $S\cap X_1$ is a maximum weighted strong stable set of
    $G[B_1 \cup C_1]$ and $w(S \cap X_1) = \alpha_{BC}$;
  \item\label{i:4c3} $S \cap A_1 = \emptyset$, $S \cap B_1 =
    \emptyset$, $S\cap X_1$ is a maximum weighted strong stable set of
    $G[C_1]$ and $w(S \cap X_1) = \alpha_{C}$;
  \item\label{i:4c4} $S \cap A_1 \neq \emptyset$, $S \cap B_1 \neq
    \emptyset$, $S\cap X_1$ is a maximum weighted strong stable set of
    $G[X_1]$ and $w(S \cap X_1) = \alpha_{X}$.
  \end{enumerate}
\end{lemma}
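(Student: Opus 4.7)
The four cases listed are indexed by the pattern of intersection of $S$ with $A_1$ and with $B_1$, so they are pairwise exclusive and collectively exhaustive; what remains is to establish the stated value of $w(S\cap X_1)$ in each case. The plan is a single exchange argument applied four times.

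The key observation comes directly from the definition of a 2-join: since $A_1$ is complete to $A_2$, the sets $S\cap A_1$ and $S\cap A_2$ cannot both be non-empty, and likewise $S\cap B_1$ and $S\cap B_2$ cannot both be non-empty. This has two consequences used throughout. In case~1, where $S\cap A_1\neq\emptyset$ and $S\cap B_1=\emptyset$, one has both $S\cap X_1\subseteq A_1\cup C_1$ and $S\cap A_2=\emptyset$; symmetric statements hold in cases~2 and~3, and in case~4 one gets $S\cap X_2\subseteq C_2$.

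The main step is then an exchange. In case~1, suppose for contradiction that $w(S\cap X_1)<\alpha_{AC}$, and let $S'_1$ be a maximum weighted stable set of $G[A_1\cup C_1]$. Then $S'_1\cup(S\cap X_2)$ is still a stable set of $G$, because any potential crossing edge between $S'_1\subseteq A_1\cup C_1$ and $S\cap X_2$ would have to run between $A_1$ and $A_2$, and $S\cap A_2=\emptyset$ rules these out. This stable set has weight strictly larger than $w(S)$, contradicting the maximality of $S$. Since $S\cap X_1$ is itself a stable set of $G[A_1\cup C_1]$ (of weight at most $\alpha_{AC}$), equality holds. Cases~2 and~3 go through by identical reasoning, with $A_1\cup C_1$ replaced by $B_1\cup C_1$ or $C_1$ and with the appropriate empty part of $X_2$ ensuring the substitution is safe ($C_1$ is already anticomplete to all of $X_2$). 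In case~4, the substitution is made on all of $G[X_1]$; since $S\cap X_2\subseteq C_2$ is anticomplete to $X_1$, no crossing edge can possibly be created, so maximality of $S$ forces $w(S\cap X_1)=\alpha_X$.

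There is no substantive obstacle: the only point requiring care is tracking, in each case, which subset of $A_2\cup B_2$ is forced to be empty, which is precisely the role of the observation in the second paragraph.
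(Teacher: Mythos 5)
Your proof is correct, and it is essentially the paper's argument: the paper dismisses this lemma with ``follows directly from the definition of a 2-join,'' and your exchange argument (using that $A_1$ complete to $A_2$ and $B_1$ complete to $B_2$ are the only crossing edges, so the relevant part of $X_2$ avoided by $S$ lets you swap $S\cap X_1$ for an optimal stable set of the appropriate piece) is exactly the routine verification being left to the reader.
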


\begin{proof}
  Follows directly from the definition of a $2$-join.
\end{proof}

The next  inequalities are from~\cite{nicolas.kristina:2-join}.  They
say  how  stable sets and $2$-joins overlap in Berge graphs.  

\begin{lemma}[Trotignon and Vu\v skovi\'c 2012]
  \label{l:ineqbasic}
  $0 \leq \alpha_{C} \leq \alpha_{AC}, \alpha_{BC} \leq \alpha_{X}
  \leq \alpha_{AC}+\alpha_{BC}$.
\end{lemma}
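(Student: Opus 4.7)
The plan is to observe that every inequality in the chain is essentially set-theoretic, and in particular none of them requires the Berge hypothesis. The role of the Berge hypothesis will only appear later, when these bare bounds are combined with structural information coming from the 2-join to compute stable sets; here we just establish the skeleton.

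First I would dispose of $0 \leq \alpha_C \leq \alpha_{AC}$ and $\alpha_C \leq \alpha_{BC}$, together with $\alpha_{AC}, \alpha_{BC} \leq \alpha_X$. All four are pure monotonicity: whenever $U \subseteq V$ we have $\alpha(G[U]) \leq \alpha(G[V])$, because any weighted stable set of $G[U]$ is also a weighted stable set of $G[V]$ (and the weight function is assumed non-negative, so the empty set realizes the bound $0 \leq \alpha_C$). The inclusions $C_1 \subseteq A_1 \cup C_1 \subseteq X_1$ and $C_1 \subseteq B_1 \cup C_1 \subseteq X_1$ immediately give the four bounds.

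The only inequality with actual content is $\alpha_X \leq \alpha_{AC} + \alpha_{BC}$. I would prove it by a weighted inclusion--exclusion argument. Let $S$ be a maximum weighted stable set of $G[X_1]$, so $w(S) = \alpha_X$. Set $S_A = S \cap (A_1 \cup C_1)$ and $S_B = S \cap (B_1 \cup C_1)$. Since $X_1 = A_1 \cup B_1 \cup C_1$ is a partition, we have $S_A \cup S_B = S$ and $S_A \cap S_B = S \cap C_1$. Both $S_A$ and $S_B$ are stable sets of $G[A_1\cup C_1]$ and $G[B_1\cup C_1]$ respectively, so $w(S_A) \leq \alpha_{AC}$ and $w(S_B) \leq \alpha_{BC}$. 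Inclusion--exclusion for the weight function gives
\[
\alpha_X \;=\; w(S) \;=\; w(S_A) + w(S_B) - w(S\cap C_1) \;\leq\; \alpha_{AC} + \alpha_{BC} - w(S\cap C_1) \;\leq\; \alpha_{AC} + \alpha_{BC},
\]
where the last step uses non-negativity of the weights.

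There is no real obstacle here; the only thing to watch is to make the implicit convention on weights explicit (weights are non-negative), and to notice that the argument is genuinely structure-free. The interest of the lemma, as signalled in the surrounding text, is that in Berge graphs these four quantities $\alpha_C, \alpha_{AC}, \alpha_{BC}, \alpha_X$ satisfy stronger parity and equality constraints that allow one to recurse efficiently across a 2-join; that refinement, and not the inequalities themselves, is what requires the Berge hypothesis.
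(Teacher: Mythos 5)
Your proof is correct and follows essentially the same route as the paper: the monotonicity bounds are dismissed as trivial, and the key inequality $\alpha_X \leq \alpha_{AC}+\alpha_{BC}$ is obtained by splitting a maximum weighted stable set $D$ of $G[X_1]$ across the two sides. The only cosmetic difference is that the paper uses the genuine partition $w(D) = w(D\cap A_1) + w(D\cap(C_1\cup B_1))$, which avoids your inclusion--exclusion step altogether.
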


\begin{proof}
  The inequalities $0 \leq \alpha_{C} \leq \alpha_{AC}, \alpha_{BC}
  \leq \alpha_{X}$ are trivially true. Let $D$ be a maximum weighted
  stable set of $G[X_1]$.  We have:
  $$
  \alpha_{X} = w(D) = w(D\cap A_1) + w(D\cap (C_1 \cup B_1)) \leq
  \alpha_{AC} + \alpha_{BC}.
  $$
\end{proof}

\begin{lemma}[Trotignon and Vu\v skovi\'c 2012]
  \label{l:ineqOdd}
  If $(X_1, X_2)$ is an odd $2$-join of $G$, then $\alpha_{C}+\alpha_{X}
  \leq \alpha_{AC}+\alpha_{BC}$.
\end{lemma}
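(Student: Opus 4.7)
The plan is to exhibit stable sets $U \subseteq A_1 \cup C_1$ and $V \subseteq B_1 \cup C_1$ with $w(U) + w(V) \geq \alpha_X + \alpha_C$; combined with $w(U) \leq \alpha_{AC}$ and $w(V) \leq \alpha_{BC}$, this yields the inequality. I would fix a maximum weighted stable set $S$ of $G[X_1]$, and write $S_A = S \cap A_1$, $S_B = S \cap B_1$, $S_C = S \cap C_1$; and also fix a maximum weighted stable set $T$ of $G[C_1]$. The strategy is to keep $S_A$ on the $U$-side and $S_B$ on the $V$-side, put the common part $S_C \cap T$ into both, and distribute each component of the bipartite graph $H = G[(S_C \setminus T) \cup (T \setminus S_C)]$ by sending one side of its bipartition to $U$ and the other to $V$, or vice versa.

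The heart of the argument is the claim that no component of $H$ contains both an ``$A$-bad'' $T$-vertex (one with a neighbor in $S_A$) and a ``$B$-bad'' $T$-vertex (one with a neighbor in $S_B$). Suppose for contradiction that $t_A, t_B$ lie in a common component, witnessed by $a \in S_A$ adjacent to $t_A$ and $b \in S_B$ adjacent to $t_B$. Let $P = v_0 v_1 \dots v_{2k}$ be a shortest $H$-path from $t_A$ to $t_B$; its length is even since both endpoints lie on the $T$-side of the bipartition of $H$. Stability of $S$ forces $a$ and $b$ to have no neighbor among the $S_C$-vertices of $P$, so all their $P$-neighbors have even index. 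Set $2i = \max\{2i' : av_{2i'} \in E(G)\}$ and $2j = \min\{2j' \geq 2i : bv_{2j'} \in E(G)\}$. The subpath $P'' = v_{2i}v_{2i+1} \dots v_{2j}$ has even length, and by the choice of $i, j$ the vertex $a$ is adjacent among $V(P'')$ only to $v_{2i}$, and $b$ only to $v_{2j}$. Since $a, b \in S$ implies $ab \notin E(G)$, the path $aP''b$ is induced, of even length at least $2$, with ends in $A_1, B_1$ and interior in $C_1$, contradicting the odd 2-join hypothesis.

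Granted the claim, I would assign each component of $H$ to ``side $U$'' or ``side $V$'' under the rule: a component with an $A$-bad $T$-vertex must go to $V$, one with a $B$-bad $T$-vertex must go to $U$ (neutral components either way). Define $U$ to be $S_A \cup (S_C \cap T)$ together with the chosen $C_1$-vertices of components assigned to $U$, and define $V$ symmetrically. Stability of $U$ is routine: within $S$ and within a single chosen side of one $H$-component it is clear; between distinct components there are no $G$-edges since $H$ is induced; and edges between $S_A$ and a $T$-vertex of a $U$-component are excluded precisely by the assignment rule. A direct count yields $w(U) + w(V) = w(S_A) + w(S_B) + 2w(S_C \cap T) + w(S_C \triangle T) = w(S) + w(T) = \alpha_X + \alpha_C$, as required. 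The main obstacle is the component claim, and is the only place where the odd-2-join hypothesis is used; in particular, one must carefully verify that the reconstructed path $aP''b$ is induced, which rests on $ab$ being a non-edge (from stability of $S$) in order to rule out the one potential non-$P''$ chord.
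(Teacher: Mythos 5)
Your proof is correct and takes essentially the same route as the paper's: both fix a maximum weighted stable set of $G[X_1]$ and one of $G[C_1]$, look at the bipartite graph spanned by their union, and swap the two sides of each connected piece, the swap being legitimate because any connection between the $A_1$-side and the $B_1$-side would produce an even induced path from $A_1$ to $B_1$ with interior in $C_1$, contradicting oddness of the 2-join. The only (presentational) difference is that the paper runs the reachability argument inside $G[C\cup D]$, where taking a minimal $A_1$--$B_1$ path makes inducedness automatic, while you work on the symmetric difference inside $C_1$ and recover inducedness of $aP''b$ via the last-neighbor/first-neighbor choice of $2i$ and $2j$.
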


\begin{proof}
  Let $D$ be a stable set of $G[X_1]$ of weight $\alpha_{X}$
  and $C$ a stable set of $G[C_1]$ of weight $\alpha_{C}$.  In
  the bipartite graph $G[(C\cup D]$, we denote by $Y_A$ (resp.\
  $Y_B$) the set of those vertices of $C\cup D$ for which there exists
  a path in $G[C \cup D]$ joining them to some vertex of $D\cap
  A_1$ (resp.\ $D \cap B_1$).  Note that from the definition, $D \cap
  A_1 \subseteq Y_A$, $D \cap B_1 \subseteq Y_B$ and there are no
  edges between $Y_A \cup Y_B$ and $(C\cup D)\setminus (Y_A \cup
  Y_B)$.  We claim that $Y_A \cap Y_B= \emptyset$, and $Y_A$ is
  anticomplete to $Y_B$.  Suppose not. Then there exists a
  path $P$ in $G[C\cup D]$ from a vertex of $D \cap A_1$ to a
  vertex of $D \cap B_1$.  We may assume that $P$ is minimal with
  respect to this property, and so the interior of $P$ is in $C_1$;
  consequently $P$ is of even length because $G[C\cup D]$ is
  bipartite.  This contradicts the assumption that $(X_1, X_2)$ is
  odd.  Now we set:

  \begin{itemize}
  \item $Z_A = (D \cap Y_A) \cup (C \cap Y_B) \cup (C \setminus (Y_A \cup
    Y_B))$;
  \item $Z_B = (D \cap Y_B) \cup (C \cap Y_A) \cup (D \setminus (Y_A \cup Y_B)$.
  \end{itemize}

  From all the definitions and properties above, $Z_A$ and $Z_B$ are
  stable sets and $Z_A \subseteq A_1 \cup C_1$ and $Z_B
  \subseteq B_1 \cup C_1$.  So, $\alpha_{C}+\alpha_{X} = w(Z_A) +
  w(Z_B) \leq \alpha_{AC}+\alpha_{BC}$.  
\end{proof}

\begin{lemma}[Trotignon and Vu\v skovi\'c 2012]
  \label{l:ineqEven}
  If $(X_1, X_2)$ is an even $2$-join of $G$, then
  $\alpha_{AC}+\alpha_{BC} \leq \alpha_{C}+\alpha_{X}$.
\end{lemma}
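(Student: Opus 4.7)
The plan is to mirror the proof of Lemma~\ref{l:ineqOdd} but in the reverse direction: start from maximum weighted stable sets on the $A$- and $B$-sides and reassemble them into stable sets of $G[C_1]$ and $G[X_1]$. Pick a maximum weighted stable set $S_{AC}$ of $G[A_1\cup C_1]$ and a maximum weighted stable set $S_{BC}$ of $G[B_1\cup C_1]$, of weights $\alpha_{AC}$ and $\alpha_{BC}$, and set $D_A=S_{AC}\cap A_1$, $D_B=S_{BC}\cap B_1$. Form the graph $H=G[S_{AC}\cup S_{BC}]$: since both $S_{AC}$ and $S_{BC}$ are stable in $G$, $H$ is bipartite with bipartition classes $S_{AC}\setminus S_{BC}$ and $S_{BC}\setminus S_{AC}$, while the shared vertices in $S_{AC}\cap S_{BC}\subseteq C_1$ are isolated in $H$.

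The key step is to show that no component $K$ of $H$ meets both $D_A$ and $D_B$. Because $D_A\cap B_1=\emptyset$ and $D_B\cap A_1=\emptyset$, we have $D_A\subseteq S_{AC}\setminus S_{BC}$ and $D_B\subseteq S_{BC}\setminus S_{AC}$, which lie on opposite sides of the bipartition; so any path in $H$ between a vertex of $D_A$ and a vertex of $D_B$ has odd length. Taking such a path of minimum length inside $K$, its interior avoids $D_A\cup D_B$ by minimality, hence lies in $(S_{AC}\cup S_{BC})\setminus(A_1\cup B_1)\subseteq C_1$. The even 2-join hypothesis then forces the path to have even length, a contradiction. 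So each component $K$ of $H$ is either \emph{$A$-type} (meets $D_A$ only, hence $K\subseteq A_1\cup C_1$), \emph{$B$-type} (symmetric), or \emph{neutral} (disjoint from $D_A\cup D_B$, hence $K\subseteq C_1$).

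Next, build stable sets $Z_X\subseteq X_1$ and $Z_C\subseteq C_1$ by scanning components of $H$. If $K$ is $A$-type, place $K\cap S_{AC}$ in $Z_X$ and $K\cap S_{BC}$ in $Z_C$ (this last set lies in $C_1$ because $K$ misses $D_B$); dually for $B$-type. If $K$ is neutral, place $K\cap S_{AC}$ in $Z_C$ and $K\cap S_{BC}$ in $Z_X$, except that an isolated vertex $v\in S_{AC}\cap S_{BC}$ is placed in both $Z_C$ and $Z_X$. Within each component, each of $K\cap S_{AC}$ and $K\cap S_{BC}$ is a stable set, and distinct components of $H$ are pairwise non-adjacent in $G$, so $Z_X$ and $Z_C$ are stable; the required containments follow from the case analysis.

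Totalling contributions component by component yields $w(Z_X)+w(Z_C)=w(S_{AC})+w(S_{BC})=\alpha_{AC}+\alpha_{BC}$: each shared isolated vertex is counted twice on the right because it lies in both $S_{AC}$ and $S_{BC}$, and this is matched by placing it into both $Z_X$ and $Z_C$. Hence $\alpha_C+\alpha_X\geq w(Z_C)+w(Z_X)=\alpha_{AC}+\alpha_{BC}$, as required. The only delicate point is the component-structure claim, which is where the even 2-join hypothesis is actually used; beyond that, the argument is a straightforward rearrangement, dual to that of Lemma~\ref{l:ineqOdd}.
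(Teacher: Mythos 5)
Your proposal is correct and is essentially the paper's own argument: the paper takes maximum weighted stable sets $A$ of $G[A_1\cup C_1]$ and $B$ of $G[B_1\cup C_1]$, looks at the bipartite graph $G[A\cup B]$, defines $Y_A,Y_B$ as the vertices linked by a path to $A\cap A_1$, resp.\ $B\cap B_1$ (i.e.\ your $A$-type and $B$-type components), rules out a connection between them by exactly your parity/minimal-path argument against the even 2-join hypothesis, and then swaps parts across components to form stable sets $Z_D\subseteq X_1$ and $Z_C\subseteq C_1$ of total weight $\alpha_{AC}+\alpha_{BC}$. Your explicit treatment of the isolated shared vertices of $S_{AC}\cap S_{BC}$ is a detail the paper leaves implicit, but the proof is the same.
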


\begin{proof}
  Let $A$ be a stable set of $G[A_1 \cup C_1]$ of weight
  $\alpha_{AC}$ and $B$ a stable set of $G[B_1 \cup C_1]$ of
  weight $\alpha_{BC}$.  In the bipartite graph $G[A\cup B]$, we
  denote by $Y_A$ (resp.\ $Y_B$) the set of those vertices of $A\cup
  B$ for which there exists a path $P$ in $G[A \cup B]$ joining
  them to a vertex of $A \cap A_1$ (resp.\ $B \cap B_1$).  Note that
  from the definition, $A \cap A_1 \subseteq Y_A$, $B \cap B_1
  \subseteq Y_B$, and $Y_A\cup Y_B$ is anticomplete to
  $(A\cup B)\setminus (Y_A \cup Y_B)$.  We claim that $Y_A \cap Y_B =
  \emptyset$ and $Y$ is anticomplete to $Y_B$.  Suppose not,
  then there is a path $P$ in $G[A\cup B]$ from a vertex of $A \cap
  A_1$ to a vertex of $B \cap B_1$.  We may assume that $P$ is minimal
  with respect to this property, and so the interior of $P$ is in
  $C_1$; consequently it is of odd length because $G (A\cup B)$ is
  bipartite.  This contradicts the assumption that $(X_1, X_2)$ is
  even.  Now we set:

  \begin{itemize}
  \item $Z_D = (A \cap Y_A) \cup (B \cap Y_B) \cup (A \setminus (Y_A \cup Y_B))$;
  \item $Z_C = (A \cap Y_B) \cup (B \cap Y_A) \cup (B \setminus (Y_A \cup Y_B))$.
  \end{itemize}

  From all the definitions and properties above, $Z_D$ and $Z_C$ are
  stable sets and $Z_D \subseteq X_1$ and $Z_C \subseteq C_1$.
  So, $\alpha_{AC}+\alpha_{BC} = w(Z_C) + w(Z_D) \leq
  \alpha_{C}+\alpha_{X}$.  
\end{proof}

The two lemmas above allow to construct blocks of decomposition of a
2-join that preserve being Berge and allow to keep track of $\alpha$
(see~\cite{chudnovskyTTV:noBsp} for the precise definition of the
blocks).  Interestingly, 2-joins are used to compute $\alpha$ in other
classes of graphs \nocite{d}(while they seem to be hard to use in
general): in claw-free graphs (see Faenza, Oriolo and
Stauffer~\cite{faenzaOrioloStauffer:clawFree}), and in even-hole-free
graphs with no star cutsets (see Trotignon and Vu\v
skovi\'c~\cite{nicolas.kristina:2-join}).

In \cite{chudnovskyTTV:noBsp}, the inequalities above are used to prove
the following.  Many technicalities are needed, and some of them come
from the fact that the blocks of decomposition for 2-joins that keep
track of $\alpha$ do not preserve being balanced-skew-partition-free.
Also, it is proved in~\cite{chudnovskyTTV:noBsp} that for Berge graphs
with no balanced skew partition, there exist \emph{extreme}
decompositions, that are decompositions such that one of the block is
basic.  These are very convenient for proofs by induction.  To handle
all these technicalities, it is convenient (if not mandatory) to work
with trigraphs, but here we state the result for graphs.

\begin{theorem}[Chudnovsky, Trotignon, Trunck and Vu\v skovi\'c 2012]
  There exists an $O(n^7)$ time algorithm whose input is a Berge graph
  with no balanced skew partition and whose output is a maximum
  weighted stable set of $G$ and a colouring of $G$.
\end{theorem}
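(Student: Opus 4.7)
The plan is to compute a maximum weighted stable set (MWSS) on Berge graphs with no balanced skew partition, and then obtain a colouring by applying Theorem~\ref{th:color}. The strategy follows the decomposition paradigm: use Theorem~\ref{th:Mdec} to reduce to basic graphs, handling each non-basic outcome via the lemmas established just above.

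First, for each of the five basic classes (bipartite, their complements, line graphs of bipartite, complements of those, and doubled graphs), MWSS is known to be solvable in polynomial time. A complement 2-join is handled by a symmetric argument on the complement, so the heart of the matter is the 2-join. Given a 2-join $(X_1,X_2)$ with parts $A_i,B_i,C_i$, the plan is to replace $X_2$ by a small \emph{block} on a bounded number of vertices whose weights are chosen from the four numbers $\alpha_C,\alpha_{AC},\alpha_{BC},\alpha_X$ computed recursively inside $G[X_2]$. Lemma~\ref{l:4cases} identifies the four ways a MWSS of $G$ interacts with the 2-join, while the parity-dependent inequalities (Lemma~\ref{l:ineqOdd} for odd 2-joins, Lemma~\ref{l:ineqEven} for even 2-joins) guarantee that a block can be designed so that
\[
\alpha(G) \;=\; \alpha\bigl(G_1^{\text{block}}\bigr),
\]
where $G_1^{\text{block}}$ is $G[X_1]$ augmented by the block. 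A MWSS on the block side can then be lifted to a MWSS of $G$ in linear time.

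The main obstacle is that these 2-join blocks need not themselves be free of balanced skew partitions, so a naive recursion on the class at hand is ill-founded; moreover, Theorem~\ref{th:npHard} warns that \emph{general} decomposition along 2-joins into easy pieces is not enough. The proposed remedy is to work in the richer framework of Berge trigraphs with no balanced skew partition (Section~\ref{sec:trigraphs}), where 2-join blocks can be defined using switchable pairs in such a way that Bergeness is preserved while the four $\alpha$-parameters are faithfully encoded. Within this framework one then proves the existence of an \emph{extreme} 2-join decomposition, i.e.\ one where, on one side, $G[X_i]$ together with the block is already basic. Extremeness is what makes the induction well-founded: at each recursive step, one of the two sides is solved directly as basic, and the class of Berge trigraphs with no balanced skew partition is closed under taking the other side.

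For the complexity count: a 2-join is detected in $O(n^2 m)$ time by the algorithm of Charbit, Habib, Trotignon and Vu\v{s}kovi\'c cited earlier; basic MWSS is polynomial; and extremeness ensures at most $O(n)$ recursive calls, each removing a basic piece of the graph. Careful bookkeeping yields a MWSS algorithm running in $O(n^5)$ time, whereupon Theorem~\ref{th:color} (with $k=5$) delivers the promised $O(n^7)$ colouring algorithm. The delicate parts will be the trigraph gymnastics needed to make the blocks behave correctly across iterated decompositions, and the proof that extreme 2-join decompositions always exist in the class—this is precisely what rules out the kind of pathological recursion exhibited by the class $\mathcal{C}'$ in Theorem~\ref{th:npHard}.
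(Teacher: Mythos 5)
Your proposal follows essentially the same route as the paper's account of this theorem: decompose via Theorem~\ref{th:Mdec}, treat the basic classes, complement 2-joins and the easy cases directly, handle 2-joins through the four parameters of Lemma~\ref{l:4cases} together with the parity inequalities of Lemmas~\ref{l:ineqOdd} and~\ref{l:ineqEven} encoded in small blocks, pass to Berge trigraphs, use extreme decompositions (one block basic) to make the recursion well-founded, and finish with Theorem~\ref{th:color} to convert the stable-set routine into an $O(n^7)$ colouring algorithm. The survey itself only sketches this argument and defers the technicalities to the cited paper of Chudnovsky, Trotignon, Trunck and Vu\v skovi\'c --- in particular the fact that the $\alpha$-tracking blocks do not preserve the absence of balanced skew partitions, which is precisely the difficulty that extremeness and the trigraph framework are used to circumvent, exactly as you indicate.
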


So far, no one knows how skew partitions could be handled to provide a
polynomial colouring algorithm.  One might think that a Berge graph uniquely
decomposable with a balanced skew partition must have an even pair.
To support this idea, Chudnovsky and
Seymour~\cite{chudnovsky.seymour:k4freeEven} studied the structure of
Berge graphs with no $K_4$ and no even pair.  They describe them quite
precisely.

\begin{theorem}[Chudnovsky and Seymour 2012]
  If $G$ is a 3-connected $K_4$-free Berge graph with no even pair,
  and with no clique cutset, then one of $G$, $\overline{G}$ is the
  line graph of a bipartite graph.
\end{theorem}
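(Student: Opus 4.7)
The plan is to combine the strong decomposition theorem (Theorem~\ref{th:Mdec}) with the even-pair hypothesis. I would begin with two preliminary observations: by the SPGT, $G$ is perfect with $\chi(G) = \omega(G) \leq 3$; and $G$ itself cannot be bipartite, because any two non-adjacent vertices in one color class of a connected bipartite graph with at least three vertices are joined only by induced paths of even length, so they form an even pair. In particular $\chi(G)=3$ and $G$ contains a triangle.

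Next I would apply Theorem~\ref{th:Mdec} to $G$ and inspect each outcome. Among the basic classes, bipartite graphs are excluded by the preceding remark. Complements of bipartite graphs in the $K_4$-free setting satisfy $\alpha(\overline{G}) \leq 3$, so $\overline{G}$ is very small and a direct case check either produces an even pair in $G$ or yields $G$ as a line graph of a bipartite graph (possibly after complementation). Doubled graphs are constrained so strongly by $K_4$-freeness that the components of $G[X]$ and anticomponents of $G[Y]$ fall into a short list of configurations, in each of which one finds either a clique cutset or an even pair. Thus among basic outcomes only ``$G$ or $\overline{G}$ is a line graph of a bipartite graph'' survives. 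For a 2-join $(X_1,X_2)$ of $G$ with the customary sets $A_i, B_i, C_i$, all induced $A_i$-to-$B_i$ paths with interior in $C_i$ share a fixed parity; using 3-connectedness to guarantee both sides are substantial and the $K_4$-free hypothesis to control triangles at the join, one locates a non-adjacent pair whose every induced connecting path has the same parity, producing an even pair. A complement 2-join of $G$ is a 2-join of $\overline{G}$ and is handled by the same argument applied in the complement.

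The hard part, as usual in Berge decomposition proofs, will be the balanced skew partition: unlike 2-joins, skew partitions do not transparently produce even pairs, and $L(K_{3,3}\setminus e)$ is itself a $K_4$-free Berge graph with no even pair that exhibits rich skew structure. For a balanced skew partition $(A,B)$ with $A = A_1\cup A_2$ anticomplete and $B = X\cup Y$ complete across, I would use 3-connectedness and the absence of a clique cutset to force each of $A_1, A_2, X, Y$ to have at least two vertices and to be appropriately connected or anticonnected. The key tool is the Roussel--Rubio lemma (Lemma~\ref{l:w}): applied to an anticonnected subset of $B$ against induced $A_1$-to-$A_2$ paths, it constrains the parities of such paths so tightly that either every induced $A_1$-to-$A_2$ path has a common parity (yielding an even pair) or the attachment pattern of $A$ to $B$ is rigid enough to force $G$ into the line-graph-of-bipartite structure. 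Converting the global parity information supplied by Lemma~\ref{l:w} into a single even pair while remaining inside the $K_4$-free regime is the principal technical obstacle I anticipate.
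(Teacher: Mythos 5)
The survey only quotes this result from Chudnovsky and Seymour's paper \emph{Three-colourable perfect graphs without even pairs}; it gives no proof, so your proposal has to stand on its own, and it does not. Your opening observations are fine (a bipartite $G$ would have an even pair inside one side, and a complement of a bipartite graph is tiny once $K_4$ is excluded), but from there on the argument is a plan rather than a proof, and the plan collapses exactly where the real difficulty sits: the balanced skew partition outcome of Theorem~\ref{th:Mdec}. You yourself concede that converting the parity information of Lemma~\ref{l:w} into an even pair (or into line-graph structure) is ``the principal technical obstacle''; but that obstacle \emph{is} the theorem. As the survey's own discussion around Thomas's conjecture and the WBGKSF makes clear, balanced skew partitions are precisely the decomposition for which no mechanism producing even pairs is known, even conjecturally, in the perfect setting; there is no reason to expect a short Roussel--Rubio argument on $A_1$-to-$A_2$ paths through the cutset to settle the $K_4$-free case, and the actual proof is a long, dedicated structural analysis of how such graphs are built (prisms, wheels, attachments), not an application of the SPGT decomposition theorem with the even-pair hypothesis fed into each outcome. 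Leaving this case as a hoped-for dichotomy is a genuine gap, not a technical loose end.

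The 2-join step is also only asserted, and it needs more care than you give it, because the outcomes of the decomposition theorem are not exclusive: to get a contradiction from ``$G$ has a 2-join'' you must prove that \emph{every} 3-connected $K_4$-free Berge graph with no clique cutset and a 2-join has an even pair, and this must be reconciled with the fact that line graphs of bipartite graphs --- the graphs that are supposed to survive, and which can have no even pairs when sufficiently connected --- do admit 2-joins in general (a 2-edge-cut in the root bipartite graph produces one). One would have to argue that 3-connectedness and the absence of clique cutsets genuinely exclude such configurations, e.g.\ because the corresponding cut in the root graph forces a 2-vertex-cut in the line graph; no such argument appears, nor is the doubled-graph case actually carried out. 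So the skeleton (decompose, kill non-basic outcomes via even pairs, kill the wrong basic classes) is a reasonable first thought, but the two outcomes that carry all the weight --- the 2-join and, above all, the balanced skew partition --- are exactly the ones for which you supply no proof.
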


This theorem was generalized by Zwols~\cite{zwols:10} to $\{K_4,
\text{odd holes}\}$-free graphs (where non-perfect exceptions exist:
$\overline{C_7}$, and a special graph on eleven vertices).
Unfortunately, it seems that this theorem does not generalize to
larger values of $\omega$, as shown by the WBGKSF, a graph $G$
(discovered by Chudnovsky and Seymour, unpublished) represented on
Fig.~\ref{wkbgsf}.  Every edge in $G$ is the middle edge of a $P_4$.
This means that in the complement, every pair of non-adjacent vertices
can be linked by a $P_4$.  Therefore, $\overline{G}$ has no even
pairs.  However, $G$ is perfect, and the balanced skew partition is
the only outcome of Theorem~\ref{th:dec} satisfied by $G$.  In fact,
$G$ has even pairs, so the next conjecture (unpublished) could still
be true. It is quite challenging, but it is not clear whether it would
help to colour perfect graphs because even pairs in the complement do
not seem usable.

\begin{figure}
  \center
  \includegraphics{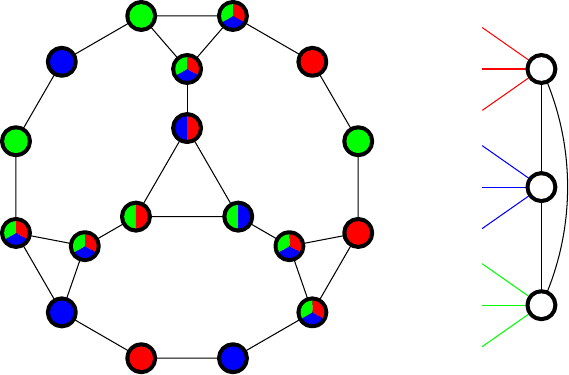}
  \caption{The WBGKSF (Worst Berge Graph Known So Far)\label{wkbgsf}.
    Red (resp.\ green, blue) edges go to red (resp.\ green, blue)
    vertices.}
\end{figure}

\begin{conjecture}[Thomas, 2002]
  If a Berge graph $G$ is uniquely decomposable by a balanced skew
  partitition (so $G$ is not basic, has no 2-join and no complement
  2-join), then one of $G$ or $\overline{G}$ has an even pair.
\end{conjecture}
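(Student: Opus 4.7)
The plan is to argue by contradiction via a minimum counterexample. Suppose $G$ is a Berge graph of smallest size such that $G$ is not basic, has neither a 2-join nor a complement 2-join, and yet neither $G$ nor $\overline{G}$ contains an even pair. By Theorem~\ref{th:Mdec}, $G$ must then admit a balanced skew partition $(A,B)$; write $A = A_1 \cup A_2$ with $A_1$ anticomplete to $A_2$, and $B = B_1 \cup B_2$ with $B_1$ complete to $B_2$. Note that $\overline{G}$ inherits all the hypotheses of the statement, so we are free to reason in whichever of $G$ or $\overline{G}$ is more convenient at each stage, and by duality I would fix attention on finding an even pair inside $G[B]$.

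The first structural observation I would exploit is that the balance condition strongly constrains path parities: every path of length at least $3$ with both ends in $B$ and interior entirely in $A$ has even length. Decomposing an arbitrary $xy$-path between non-adjacent $x,y \in B$ at its successive visits to $B$, its parity reduces modulo 2 to the number of its edges lying inside $G[B]$. Since vertices in different anticomponents of $G[B]$ are adjacent and therefore not even-pair candidates, the search narrows to non-adjacent pairs inside a single anticomponent $B_i$. To control odd $xy$-paths that might obstruct such an even pair, I would apply the Roussel--Rubio lemma (Lemma~\ref{l:w}) with $T$ chosen to be an anticonnected subset of $B_{3-i}$ (which is complete to $B_i$), forcing every odd $xy$-path to trigger one of the wonderful-lemma outcomes: either a leap generating a prism, or a short antipath through $T$ generating an antihole. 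Together with an appropriate extremal choice of $(x,y)$ — for instance endpoints of a shortest antipath inside $B_i$ — this should drastically restrict the shape of odd $xy$-paths and pin the obstruction to an even pair onto a small catalogue of configurations: long prisms, wheels, double diamonds, or line-graph skeletons.

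Having reduced matters to such a catalogue, I would then invoke the dense-basic-class philosophy of the proof of Theorem~\ref{th:dec}: in each case, the extra presence of the balanced skew partition should force one of the forbidden outcomes — a 2-join, a complement 2-join, or a basic graph — by arguments in the spirit of the attachment analysis of Section~\ref{sec:bfp}. For instance, a long prism appearing in $G$ should, after accounting for how $A_1,A_2$ and $B_1,B_2$ may attach to its cliques and paths under the balance constraint, either enlarge into a line-graph structure or cut off one of the prism's paths as one side of a 2-join, contradicting minimality.

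The hard part will be handling the intrinsic asymmetry between $G$ and $\overline{G}$ illustrated by the WBGKSF of Figure~\ref{wkbgsf}: there, even pairs exist on one side only, so the proof cannot hope for a symmetric contradiction and must genuinely exploit which of the two graphs is being examined at each step. A second obstacle is that the blocks of decomposition of a balanced skew partition are not well-behaved under the natural cut-and-paste operation — they do not in general remain Berge — so an induction through the skew partition itself seems to require the trigraph apparatus of Section~\ref{sec:trigraphs}, together with a suitable notion of block that keeps track of the parity of paths from $B$ to $B$ across $A$. Pushing the Roussel--Rubio-based case analysis through this trigraph setting, while simultaneously preserving the absence of 2-joins and complement 2-joins in the smaller blocks, is where I expect the real difficulty to lie, and is presumably the reason why this conjecture has remained open.
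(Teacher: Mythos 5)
This statement is a conjecture in the paper (attributed to Thomas, explicitly described as unpublished and open), so there is no proof of it to compare yours against; and what you have written is a strategy sketch, not a proof. Its central concrete gap is the step where you confine the search for an even pair to non-adjacent vertices inside one anticomponent of the skew cutset $B$. Nothing in the hypotheses justifies this: the cutset $B$ may even be a clique (a clique cutset is a special case of a balanced skew partition here), in which case there are no non-adjacent pairs in $B$ at all, and in general the even pairs of $G$ need not sit inside any prescribed cutset --- the WBGKSF of Fig.~\ref{wkbgsf} has even pairs, but the point of that example is precisely that their location is not controlled by the skew partition structure. The subsequent parity bookkeeping and the appeal to Lemma~\ref{l:w} are plausible tools, but no statement is actually established; outcomes \ref{oleap} and \ref{ohop} only produce a prism or an antihole somewhere, and you give no argument that these can be organized into the ``small catalogue'' you invoke.

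The second gap is that the intended contradiction in your third paragraph is circular. If the catalogue case analysis produces a long prism, wheel or double diamond, this does not contradict the hypotheses: the attachment analysis behind Theorem~\ref{th:dec} shows such dense structures force a 2-join, a complement 2-join, basicness \emph{or a balanced skew partition}, and the last outcome is exactly what the conjecture allows. The known result in this direction, Theorem~\ref{th:pmep}, covers only the \emph{impoverished} case, i.e.\ when all these dense structures are absent from both $G$ and $\overline{G}$; the whole difficulty of the conjecture lies in the complementary case, which your plan would simply feed back into the hypothesis rather than refute. Finally, the minimum-counterexample framing is never usable: you propose no construction of a smaller graph satisfying the hypotheses (the blocks of a balanced skew partition need not be Berge-uniquely-decomposable, non-basic, or 2-join-free, as you yourself note), so minimality does no work anywhere in the argument.
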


I feel the next two questions as the most important ones about perfect
graphs.

\begin{question}
  Describe the structure of Berge graphs with no even pairs. 
\end{question}

\begin{question}
  Describe a combinatorial polynomial time algorithm that colours every
   Berge graphs.  
\end{question}

\section*{Acknowledgement}
\addcontentsline{toc}{section}{Acknowledgement}

Thanks to Maria Chudnovsky, Au\'elie Lagoutte, Irena Penev, Fr\'ed\'eric Maffray and
Robin Wilson for several suggestions that improved this survey.

\bibliographystyle{plain}

\bibliography{../../Bibliographie/articles}

\addcontentsline{toc}{section}{References}
\end{document}